\numberwithin{equation}{section}
\newtheorem{thm}{Theorem}[section]
\newtheorem*{thm*}{Theorem}
\newtheorem*{prop*}{Proposition}
\newtheorem*{lem*}{Lemma}
\newtheorem{lemma}{Lemma}
\newtheorem{lem}[thm]{Lemma}
\newtheorem{prop}[thm]{Proposition}
\newtheorem{cor}[thm]{Corollary}
\newtheorem{defn}[thm]{Definition}
\newtheorem{rem}[thm]{Remark}
\newcommand{\NN}{\mathbb{N}}
\newcommand{\RR}{\mathbb{R}}
\newcommand{\cB}{\mathcal B}
\newcommand{\cF}{\mathcal F}
\newcommand{\cH}{\mathcal H}
\newcommand{\cM}{\mathcal M}
\newcommand{\cT}{\mathcal T}
\newcommand{\ct}{\mathcal{t}}
\newsavebox{\@brx}
\newcommand{\llangle}[1][]{\savebox{\@brx}{\(\m@th{#1\langle}\)}%
  \mathopen{\copy\@brx\kern-0.5\wd\@brx\usebox{\@brx}}}
\newcommand{\rrangle}[1][]{\savebox{\@brx}{\(\m@th{#1\rangle}\)}%
  \mathclose{\copy\@brx\kern-0.5\wd\@brx\usebox{\@brx}}}
\colorlet{symbols}{black}
\colorlet{testcolor}{green!60!black}
\def\1{\mathbf{{1}}}
\definecolor{dblue}{rgb}{0.1, 0.1, 0.9}
\tikzset{
	root/.style={circle,fill=testcolor,inner sep=0pt, minimum size=2mm},		
	dot/.style={circle,fill=black,draw=black, solid,inner sep=0pt,minimum size=0.75mm},
	bdot/.style={circle,fill=blue,draw=dblue, solid,inner sep=0pt,minimum size=0.75mm},
		}
\colorlet{symbols}{blue!90!black}
\def\DeclareSymbol#1#2#3{\expandafter\gdef\csname MH@symb@#1\endcsname{\tikz[baseline=#2,scale=0.15]{#3}}%
\expandafter\gdef\csname MH@symb@#1s\endcsname{\scalebox{0.6}{\tikz[baseline=#2,scale=0.15]{#3}}}}
\def\<#1>{\csname MH@symb@#1\endcsname}
\let\P= \undefined
\newcommand{\P}{\mathbf{P}}
\newcommand\n{\mathbf{n}}
\newcommand\m{\mathbf{m}}
\newcommand\sn{\partial\mathbf{n}}
\newcommand\sm{\partial\mathbf{m}}
\title{Scaling limit of the cluster size distribution for the random current measure on the complete graph}
\begin{document}

\author{Dmitrii Krachun\footnotemark[1]\footnote{Princeton University,~dk9781@princeton.edu}, \, Christoforos Panagiotis\footnotemark[2]\footnote{University of Bath,~cp2324@bath.ac.uk}, \, Romain Panis\footnotemark[3]\footnote{Université de Genève,~romain.panis@unige.ch}}

\maketitle

\begin{abstract}
We study the percolation configuration arising from the random current representation of the near-critical Ising model on the complete graph. We compute the scaling limit of the cluster size distribution for an arbitrary set of sources in the single and the double current measures. As a byproduct, we compute the tangling probabilities recently introduced by Gunaratnam, Panagiotis, Panis, and Severo in \cite{gunaratnam2022random}. This provides a new perspective on the switching lemma for the $\varphi^4$ model introduced in the same paper: in the Gaussian limit we recover Wick's law, while in the Ising limit we recover the corresponding tool for the Ising model.
\end{abstract}

\section{Introduction}
For $n \geq 1$, consider the complete graph $K_n=(V_n,\mathcal{P}_2(V_n))$ on $n$ vertices, where $\mathcal{P}_2(V_n)$ is the set of pairs of elements of $V_n$. Let $\lambda\in \mathbb R$, and define the inverse temperature
\begin{equs}
    d_n(\lambda):=\frac{1}{n}\left(1-\frac{\lambda}{\sqrt{n}}\right).
\end{equs}
For any $\sigma\in \lbrace \pm 1\rbrace^{V_n}$, define the Hamiltonian of the Ising model on the complete graph $K_n$ by
\begin{equs}
    \mathcal{H}_{n,\lambda}(\sigma):=-\sum_{\lbrace i,j\rbrace\in \mathcal{P}_2(V_n)}d_n(\lambda)\sigma_i\sigma_j.
\end{equs}
The associated Ising measure is defined via the expectation values 
\begin{equs}
    \langle F(\sigma)\rangle_{n,\lambda}:=\frac{1}{\mathcal{Z}_n(\lambda)}\sum_{\sigma\in \lbrace \pm 1\rbrace^{V_n}}F(\sigma)\exp(-\mathcal{H}_{n,\lambda}(\sigma))
\end{equs}
for $F: \lbrace \pm 1\rbrace^{V_n}\rightarrow \mathbb R^+$,
where $\mathcal{Z}_n(\lambda)$ is the partition function of the model. This model, which is also known as the \emph{near-critical Curie--Weiss} model, has been introduced independently by many people, including \cite{temperley1954mayer,husimi1953statistical}, and since then, it has been extensively studied. See \cite[Chapter~2]{friedli2017statistical} and references therein. 

We are interested in a particular expansion of the partition function of the model called the \emph{random current expansion} which provides an interesting geometric representation of the correlation functions of the model. This expansion was initially introduced in \cite{GHS} to prove concavity of the magnetisation with a positive external field, but the probabilistic picture attached to it was developed in \cite{A}. We refer to \cite{duminil2017lectures} for more information about this expansion.

A \emph{current} on $K_n$ is a function $\n:\mathcal{P}_2(V_n)\rightarrow ~\mathbb N:=\lbrace 0,1,\ldots\rbrace$. For $\lbrace i,j\rbrace\in \mathcal{P}_2(V_n)$ and $\n\in \Omega_n$, we will write $\n_{i,j}=\n(i,j)$ for the value of $\n$ on the edge $\lbrace i,j\rbrace$. We denote by $\Omega_n$ the set of currents on $K_n$. The set of \emph{sources} of $\n$, denoted by $\sn$, is defined as
\begin{equs}
\sn:=\left\lbrace x \in V_n \: : \: \sum_{y\in V_n\setminus\{x\}}\n_{x,y}\textup{ is odd}\right\rbrace.
\end{equs}
It is an easy exercise to check that $|\sn|$ is always even. For a current $\n$ on $K_n$, introduce the weight
\begin{equs}
    w(\n)=w_{n,\lambda}(\n):=\prod_{\lbrace i,j\rbrace\in \mathcal{P}_2(V_n)} \frac{d_n(\lambda)^{\n_{i,j}}}{\n_{i,j}!}.
\end{equs}
For $S$ an even subset of $V_n$ (i.e.\ of even cardinality), let $Z_n^S(\lambda):=\sum_{\sn=S}w(\n)$. When $S=\emptyset$, we let $Z_n^\emptyset(\lambda)=Z_n(\lambda)$. It is classical \cite{GHS,A} that 
\begin{equs}\label{eq: link between partition functions}
    \mathcal{Z}_n(\lambda)=2^nZ_n(\lambda),
\end{equs}
and furthermore, if $\sigma_S:= \prod_{j\in S}\sigma_j$,
\begin{equs}
    \langle \sigma_S\rangle_{n,\lambda}=\frac{Z_n^S(\lambda)}{Z_n^\emptyset(\lambda)}.
\end{equs}

To each current $\n$, we can associate a percolation configuration by looking at the trace of the edges that carry a non-zero weight, i.e. $(\mathbbm{1}[\n_{i,j}>0])_{\{i,j\}\in \mathcal{P}_2(V_n)}$. As is standard in percolation theory, our interest lies in the connectivity properties of this percolation configuration. Let us import some standard terminology. If $x,y \in V_n$, we say that $x$ is \emph{connected} to $y$ in $\n$ and write $x\overset{\n}{\longleftrightarrow} y$, if there exists a sequence of points $x_0=x,x_1,\ldots, x_m=y$ such that $\n_{x_i,x_{i+1}}>0$ for $0\leq i \leq m-1$. For $x\in V_n$, we let $\mathcal{C}_x(\n)$ be the set of vertices of $K_n$ which are connected to $x$ in $\n$. Note that for any $x\in V_n$, $\mathcal{C}_x(\n)$ contains an even number of sources.

We naturally define a probability measure $\mathbb P_{n,\lambda}^S$ on the set of currents with source set $S$ by letting 
\begin{equs}
    \mathbb P_{n,\lambda}^S[\n]:=\frac{1}{Z_n^S(\lambda)}w(\n)\mathbbm{1}[\sn=S].
\end{equs}

The main interest in the current expansion of the Ising model arises from the existence of a combinatorial tool called the \emph{switching lemma}, which was initially introduced in \cite{GHS} and used in many subsequent works to capture fine properties of the Ising model \cite{A,aizenman1983renormalized,aizenman1986critical,aizenman1987phase,DCT,aizenman2019emergent,raoufi2020translation,ADC,panis2023triviality}, see \cite{duminil2017lectures} for more information. This tool relates ratios and differences of correlation functions to a system of two independent currents. This motivates us to also consider, for $S_1,S_2\subset V_n$, the measure 
\begin{equs}\label{eq: switching intro}
    \mathbb P_{n,\lambda}^{S_1,S_2}:=\mathbb P_{n,\lambda}^{S_1}\otimes \mathbb P_{n,\lambda}^{S_2}.
\end{equs}
Applying the switching lemma \cite[Lemma~4.3]{duminil2017lectures}, we find that for any event $\mathcal{A}$ which depends on $\n_1+\n_2$,
\begin{equs}\label{eq: switching 1}
    \mathbb P_{n,\lambda}^{S_1,S_2}[\mathcal{A}]=\frac{\langle \sigma_{S_1\Delta S_2}\rangle_{n,\lambda}}{\langle \sigma_{S_1}\rangle_{n,\lambda}\langle \sigma_{S_2}\rangle_{n,\lambda}} \mathbb P_{n,\lambda}^{S_1 \Delta S_2,\emptyset}[\mathcal{A}, \:\mathcal{F}_{S_2}],
\end{equs}
where $\mathcal{F}_{S_2}$ is the event that there exists a sub-current $\m\leq \n_1+\n_2$ satisfying $\sm=S_2$. Note that if $S_2=\lbrace i,j\rbrace$, $\mathcal{F}_{S_2}$ coincides with the event that $i\overset{\n}{\longleftrightarrow} j$.
This observation and Lemma \ref{lem: GS} (which provides asymptotics as $n\rightarrow \infty$ for the ratio of correlation functions appearing in the right-hand side of \eqref{eq: switching 1}) reduce the analysis to the case $S_2=\emptyset$.

Let $S$ be an even subset of $V_n$. Let $\mathcal{P}_{\mathrm{even}}(S)$ be the set of even partitions of $S$, i.e. the set made of partitions of $S$ consisting only of even subsets of $S$. If $\n$ is a current on $K_n$ with sources $S$, we denote by $\mathfrak{P}(\n)$ the partition of $S$ in the associated percolation configuration. Note that for a current $\n$ one always has that $\mathfrak{P}(\n)\in \mathcal{P}_{\mathrm{even}}(\sn)$.

In \cite[Proposition 4.3]{gunaratnam2022random}, the authors proved\footnote{This observation was already made in \cite{A}.} that the clusters of the sources have size of order $\sqrt{n}$. Our main result strengthens this observation. For simplicity we focus on the clusters of the sources. Beyond these clusters, there could be several other clusters of size of order $\sqrt{n}$. Their study reduces to the case below as explained in Remark \ref{rem: study of the clusters of non-sources}.

\begin{thm}\label{thm: main theorem} Let $\lambda\in \mathbb R$ and $k\geq 1$. Set $S=\lbrace 1,\ldots, 2k\rbrace$. Let $\n_1$ (resp. $\n_2$) be distributed according to $\mathbb P_{n,\lambda}^S$ (resp. $\mathbb P^\emptyset_{n,\lambda}$). There exist two random variables $X_{k,\lambda}^{\mathsf{s}},X_{k,\lambda}^{\mathsf{d}}$ on $(\mathbb R_{\geq 0})^{2k}\times \mathcal{P}_{\mathrm{even}}(S)$ such that
\begin{equs}
    \left(\frac{|\mathcal{C}_1(\n_1)|}{\sqrt{n}},\ldots, \frac{|\mathcal{C}_{2k}(\n_1)|}{\sqrt{n}}, \mathfrak{P}(\n_1)\right)\underset{n \rightarrow \infty}{\longrightarrow}X_{k,\lambda}^{\mathsf{s}},
\end{equs}
\begin{equs}
    \left(\frac{|\mathcal{C}_1(\n_1+\n_2)|}{\sqrt{n}},\ldots, \frac{|\mathcal{C}_{2k}(\n_1+\n_2)|}{\sqrt{n}}, \mathfrak{P}(\n_1+\n_2)\right)\underset{n \rightarrow \infty}{\longrightarrow}X_{k,\lambda}^{\mathsf{d}},
\end{equs}
where both convergences are in distribution. Moreover, each of the first $2k$ coordinates of $X_{k,\lambda}^{\mathsf{s}}$ (resp. $X_{k,\lambda}^{\mathsf{d}}$) is strictly positive almost surely.
\end{thm}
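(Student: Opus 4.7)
The plan is to reduce the theorem to partition--function asymptotics via a decomposition of each current by its source--cluster structure, and then to extract the limit from the classical near--critical Curie--Weiss scaling. Write $Y_c^{2j}(\lambda) := \sum_{\sn = [2j],\, \n \text{ connected on } K_c} w_{n,\lambda}(\n)$, where the edge weights remain those of the ambient $K_n$. Any current $\n$ with $\sn = S$ is determined by an even partition $\pi = \{S_1,\ldots,S_m\}$ of $S$, disjoint vertex sets $V_i \supseteq S_i$ with $|V_i| = c_i$, a connected sourced current on each $K_{V_i}$, and a sourceless current on $K_{V_n\setminus \bigcup V_i}$ carrying no edge to any $V_i$. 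Summing over labels of non--source vertices,
\begin{equs}
\mathbb P_{n,\lambda}^S[\mathfrak P(\n) = \pi,\, |\mathcal C_{S_i}| = c_i\ \forall i] = \frac{(n-2k)!}{(n - \sum c_i)!\prod_i (c_i - |S_i|)!}\cdot \frac{\prod_i Y^{|S_i|}_{c_i}(\lambda)\, Z_{n-\sum c_i}(\lambda)}{Z_n^S(\lambda)}.
\end{equs}
Theorem~\ref{thm: main theorem} for the single current reduces to an explicit joint asymptotic of the right--hand side under $c_i = \sqrt n\, t_i$.

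The ratios $Z_n^S(\lambda)/Z_n(\lambda)$ and $Z_{n - c}(\lambda)/Z_n(\lambda)$ come from near--critical Curie--Weiss scaling via a Laplace analysis of $\mathcal Z_n(\lambda) = \sum_M \binom{n}{(n+M)/2}\exp(d_n(\lambda) M^2/2)$: the magnetization $M_n/n^{3/4}$ converges in distribution to $X_\lambda$ with density proportional to $\exp(-\lambda x^2/2 - x^4/12)$, giving $\langle \sigma_S\rangle_{n,\lambda} = (1 + o(1))\, n^{-k/2}\, \mathbb E[X_\lambda^{2k}]$. The same Laplace expansion applied to $Z_{n - \sqrt n\, s}(\lambda)/Z_n(\lambda)$ yields an explicit limit expressed as a ratio of $\varphi^4$ partition functions $Z^{\varphi^4}_{\lambda + s}/Z^{\varphi^4}_\lambda$ with $Z^{\varphi^4}_\mu := \int \exp(-\mu x^2/2 - x^4/12)\, dx$, reflecting that the removal of $\sqrt n\, s$ vertices shifts the effective parameter by $s$.

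The technical heart of the argument is the asymptotic of the connected sourced partition function $Y^{2j}_c(\lambda)$ for $c\sim\sqrt n\, t$. Since $c\, d_n(\lambda) \to 0$, the induced Ising model on $K_c$ is deeply subcritical, so the event $\{\n\text{ connected on }K_c\}$ is a large--deviation event dominated by spanning--tree configurations: for each spanning tree $T$ of $K_c$, the parities of $(\n_e)_{e\in T}$ are uniquely determined by the $2j$ sources, and summing integer currents of the correct parity contributes either $\sinh(d_n(\lambda))$ or $\cosh(d_n(\lambda)) - 1$ per edge. Combining Cayley's $c^{c-2}$ with the combinatorics of the number of odd edges should yield an asymptotic of the form $Y^{2j}_c(\lambda) = (1 + o(1))\, A_n(c)\,\kappa_{2j,\lambda}(t)$ for explicit $A_n(c)$ and $\kappa_{2j,\lambda}$. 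The main obstacle is to control the contribution of spanning subgraphs with extra cycles beyond a single spanning tree; this should follow from a tree--graph type inequality together with an inclusion--exclusion identity relating $Y^{2j}_c$ to $Z^{2j}_c = Z_c(\lambda)\,\langle \sigma_{[2j]}\rangle_c^{(d_n(\lambda))}$ (which is already accessible via Step 2 on $K_c$).

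Plugging the asymptotics of Steps 2--3 into Step 1 and applying Stirling to the multinomial coefficient cancels all $n$--dependent factors and produces an explicit joint density on $(\mathbb R_{> 0})^m \times \mathcal P_{\mathrm{even}}(S)$, defining $X_{k,\lambda}^{\mathsf s}$. Strict positivity of each coordinate follows from $\kappa_{2j,\lambda}(t)\to 0$ as $t\downarrow 0$ (a source must support at least one incident edge, so the cluster cannot shrink to a point in the limit). For the double current, the same source--cluster decomposition applied to $\n_1 + \n_2$ reduces the analysis to a ``pair--connected'' partition function summing over pairs $(\n_1, \n_2)$ on $K_c$ with $\sn_1 = [2j]$, $\sn_2 = \emptyset$ and $\n_1 + \n_2$ connected; since Poisson convolution gives $\sum_{k_1+k_2=m} d_n^{k_1+k_2}/(k_1! k_2!) = (2 d_n)^m/m!$ per edge, the spanning--tree analysis of Step 3 adapts directly with effective weight $2 d_n(\lambda)$, together with extra bookkeeping for the parity split between $\n_1$ and $\n_2$, to yield the limit $X_{k,\lambda}^{\mathsf d}$.
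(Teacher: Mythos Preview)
Your Step~1 decomposition and Step~2 partition-function asymptotics are essentially what the paper does, but Step~3 contains a real gap. The claim that $Y^{2j}_c(\lambda)$ is dominated by spanning-tree configurations is false in the regime $c\sim t\sqrt n$: since $c^2 d_n(\lambda)\sim t^2$ is of order $1$, each edge added beyond a spanning tree costs a factor $d_n(\lambda)$ in weight but gains a factor of order $c^2$ in the number of ways to place it, so connected subgraphs with any \emph{bounded} number of surplus edges contribute at the same leading order as trees. Already for $j=1$ the bare path from $1$ to $2$ and the ``tadpole'' (a path with a cycle attached at one internal degree-$4$ vertex) both survive in the limit and enter the density of $X^{\mathsf s}_{1,\lambda}$. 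A tree-graph inequality gives only an upper bound, and the inclusion--exclusion relating $Y^{2j}_c$ to $Z^{2j}_c$ produces an alternating sum whose terms are all of the same order; to extract a sharp asymptotic you would have to resolve precisely the cycle structures you were hoping to discard.

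The paper circumvents the computation of $Y^{2j}_c$ entirely. It first shows by direct moment bounds (Lemmas~\ref{lem: multiplicativity edges}--\ref{lem: degree}) that with high probability the source clusters have only simple edges, sources of degree $1$, no vertices of degree $\ge 5$, and a \emph{tight} number of degree-$4$ vertices. On this good event each cluster is encoded by a finite \emph{backbone multigraph} on the sources and degree-$4$ vertices (its edges are the contracted self-avoiding walks and cycles of the cluster), and the limiting probability is an explicit finite sum over backbone isomorphism classes, each contributing an integrand $\prod_i x_i^{\ell_i-1}/(\ell_i-1)!$ times a Gaussian--quartic factor. In effect your single $\kappa_{2j,\lambda}(t)$ must be replaced by this sum over backbones. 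The same correction propagates to your double-current step: the Poisson-convolution heuristic with effective weight $2d_n(\lambda)$ does not by itself enforce $\partial\n_2=\emptyset$ separately, and the paper tracks an additional combinatorial factor $C_G$ counting the admissible red/blue edge-colourings of each backbone compatible with the two source constraints.
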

\begin{rem} In Section \textup{\ref{section: scaling limit of cluster size distribution}}, we will obtain exact formulas for the density of these random variables (see Proposition \textup{\ref{prop: main prop}}).
\end{rem}

To prove Theorem \ref{thm: main theorem}, we will first analyse the structure of the clusters of the sources in the limit $n\rightarrow \infty$, see Section \ref{section: properties of the current}. As it turns out, the clusters have a simple combinatorial description in terms of a family of mutligraphs we call \emph{backbone multigraphs} which in particular carry the information of how the sources are connected together, see Definition \ref{def: backbone multigraph}. Conditionally on this backbone multigraph, the scaling limit of the cluster size distribution can be computed exactly, see \eqref{eq: eq limit single} and \eqref{eq: eq limit double}.

In comparison with our model, for the well-studied (near-)critical \footnote{The critical threshold for the Erd\H{o}s--Rényi model on $K_n$ is $\tfrac{1}{n}$. The near-critical regime is given by a parameter of the form $\tfrac{1}{n}\Big(1-\tfrac{\lambda}{n^{1/3}}\Big)$ with $\lambda \in \mathbb R$.} Erd\H{o}s--R{\'e}nyi model, the size of the largest cluster is of order $n^{2/3}$. The asymptotic joint distribution of components sizes after normalising by $n^{-2/3}$ was obtained by Aldous \cite{aldous1997brownian}, which lead to the continuum limit of Erd\H{o}s--R{\'e}nyi components \cite{addario2012continuum}, as well as to results in the spirit of Theorem~\ref{thm: main theorem} for several other models, see e.g.\ \cite{bollobas1996random,aldous2000random,nachmias2010critical}. See also \cite{van2016random} and references therein.

As a corollary of Theorem \ref{thm: main theorem} and Proposition \ref{prop: gaussian}, where we obtain the scaling limit of the total number of vertices of positive degree, we get the following result. Recall that a random variable $X$ on $\mathbb R^d$ is said to have Gaussian tails if there exists $c>0$ such that for all $t\geq 1$,
\begin{equs}
    \mathbb P[|X|\geq t]\leq \frac{1}{c}e^{-ct^2},
\end{equs}
where $|\cdot|$ denotes the Euclidean norm on $\mathbb R^d$.
\begin{cor}[Scaling limit of the cluster size distribution]\label{cor: cor 1} Let $\lambda\in \mathbb R$ and $k\geq 1$. Set $S=\lbrace 1,\ldots, 2k\rbrace$. Let $\n_1$ (resp. $\n_2$) be distributed according to $\mathbb P_{n,\lambda}^S$ (resp. $\mathbb P^\emptyset_{n,\lambda}$). Let 
$Y_{k,\lambda}^{\mathsf{s}}$ (resp. $Y_{k,\lambda}^{\mathsf{d}}$) be the projection on the first $2k$ coordinates of $X_{k,\lambda}^{\mathsf{s}}$ (resp. $X_{k,\lambda}^{\mathsf{d}})$. Then, $Y_{k,\lambda}^{\mathsf{s}}, Y_{k,\lambda}^{\mathsf{d}}$ have Gaussian tails and,
\begin{equs}\label{eq: equation 1 cor 1}
    \left(\frac{|\mathcal{C}_1(\n_1)|}{\sqrt{n}},\ldots, \frac{|\mathcal{C}_{2k}(\n_1)|}{\sqrt{n}}\right)\underset{n \rightarrow\infty}{\longrightarrow}Y_{k,\lambda}^{\mathsf{s}},
\end{equs}
\begin{equs}\label{eq: equation 2 cor 1}
    \left(\frac{|\mathcal{C}_1(\n_1+\n_2)|}{\sqrt{n}},\ldots, \frac{|\mathcal{C}_{2k}(\n_1+\n_2)|}{\sqrt{n}}\right)\underset{n \rightarrow\infty}{\longrightarrow}Y_{k,\lambda}^{\mathsf{d}},
\end{equs}
where both convergences are in distribution.
\end{cor}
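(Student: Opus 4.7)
The convergence statements \eqref{eq: equation 1 cor 1} and \eqref{eq: equation 2 cor 1} will follow at once from Theorem \ref{thm: main theorem} by the continuous mapping theorem applied to the projection $(\mathbb R_{\geq 0})^{2k} \times \mathcal{P}_{\mathrm{even}}(S) \to (\mathbb R_{\geq 0})^{2k}$ onto the first $2k$ coordinates, which is continuous since the second factor is finite and discrete. The only substantive content of the corollary is therefore the Gaussian tail estimate on $|Y_{k,\lambda}^{\mathsf{s}}|$ and $|Y_{k,\lambda}^{\mathsf{d}}|$.

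My plan for the tail bound is a straightforward domination argument. Let $N(\n) := |\{v \in V_n : \sum_{w \neq v} \n_{v,w} > 0\}|$ denote the number of vertices of positive degree in the current $\n$. Deterministically, one has $|\mathcal{C}_i(\n)| \leq N(\n)$ for every $i \in S$. Proposition \ref{prop: gaussian} not only identifies the scaling limit of $N(\n_1)/\sqrt n$ and $N(\n_1 + \n_2)/\sqrt n$, but (as its proof will show) also delivers uniform-in-$n$ Gaussian upper tail bounds of the form $\mathbb P[N(\n)/\sqrt n \geq t] \leq c^{-1} e^{-ct^2}$ for $t \geq 1$ and $\n$ sampled from either of the two measures. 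Combining this with the pointwise domination, each normalised coordinate $|\mathcal{C}_i(\cdot)|/\sqrt n$ inherits the same bound uniformly in $n$.

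Passing these prelimit tail bounds to the limit via the Portmanteau theorem (applied to the closed half-lines $[t,\infty)$, or equivalently the open half-lines $(t-\epsilon,\infty)$ followed by $\epsilon \downarrow 0$) yields the corresponding Gaussian tail on each coordinate of $Y_{k,\lambda}^{\mathsf{s}}$ and $Y_{k,\lambda}^{\mathsf{d}}$. A union bound over the $2k$ coordinates, together with the elementary inequality $|Y| \leq \sqrt{2k}\max_i Y_i$ on the nonnegative orthant, then transfers the bound to the Euclidean norm, at the cost of adjusting $c$ by a factor depending only on $k$. The only non-trivial ingredient is Proposition \ref{prop: gaussian} itself; conditional on that result, this corollary is a routine combination of domination and weak convergence, and I do not anticipate any real obstacle.
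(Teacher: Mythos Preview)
Your proposal is correct and follows essentially the same route as the paper: the convergence is immediate from Theorem~\ref{thm: main theorem}, and the Gaussian tail comes from the pointwise domination $|\mathcal{C}_i(\n)| \leq \mathcal{N}(\n)$ together with Proposition~\ref{prop: gaussian}. One small remark: you phrase the tail argument as extracting \emph{uniform-in-$n$} Gaussian bounds from the proof of Proposition~\ref{prop: gaussian} and then passing them through the limit, whereas the paper works directly with the limiting variable --- since $\mathcal{N}(\n)/\sqrt{n}$ converges in law to a constant multiple of $Z_k^2$ and $Z_k$ has quartic-exponential tails, a single application of Portmanteau to closed half-lines gives $\mathbb{P}[|Y|_\infty > t] \leq \mathbb{P}[c\,Z_k^2 \geq t]$ without ever needing the uniform prelimit bounds. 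Both arguments are valid; the paper's is marginally shorter. Also note that Proposition~\ref{prop: gaussian} as stated covers only the single-current measure, so for the double current you should either invoke $\mathcal{N}(\n_1+\n_2)\leq \mathcal{N}(\n_1)+\mathcal{N}(\n_2)$ and apply the proposition to each summand, or remark (as the paper does) that the double case is handled similarly.
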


One striking property of the near-critical Ising model introduced above is that its scaling limit is (under appropriate scaling) the $\varphi^4$ measure, see \cite{simon1973varphi} or Appendix \ref{appendix: proof of GS}. This observation plays a central role in \cite{gunaratnam2022random} where a switching lemma for the discrete $\varphi^4$ model (see Section \ref{section: phi4} for a definition) is obtained by approximating the latter model with a well-chosen sequence of Ising models of the above type. As it turns out, the limiting objects are related to the distribution of $\mathfrak{P}(\n)$. In the latter paper, even partitions of $S$ are referred to as \emph{tanglings}, see Section \ref{section: phi4}.

In the proof of Theorem \ref{thm: main theorem}, we will also obtain that the projections of $X_{k,\lambda}^{\mathsf{s}}$ and $X_{k,\lambda}^{\mathsf{d}}$ on $\mathcal{P}_{\mathrm{even}}(S)$ give positive probability to any even partition. This yields the following result which in particular improves on \cite[Lemma~4.7]{gunaratnam2022random}, where the full convergence of the tangling probabilities is missing.

\begin{cor}[Convergence of the tanglings probabilities]\label{cor: cor 2}
Let $\lambda \in \mathbb R$ and $k\geq 1$. Set $S=\lbrace 1,\ldots, 2k\rbrace$. Fix $\mathsf{P}\in \mathcal{P}_{\mathrm{even}}(S)$. Let $\n_1$ (resp. $\n_2$) be distributed according to $\mathbb P_{n,\lambda}^S$ (resp. $\mathbb P^\emptyset_{n,\lambda}$). Let 
$P_{k,\lambda}^{\mathsf{s}}$ (resp. $P_{k,\lambda}^{\mathsf{d}}$) be the projection on $\mathcal{P}_{\mathrm{even}}(S)$ of $X_{k,\lambda}^{\mathsf{s}}$ (resp. $X_{k,\lambda}^{\mathsf{d}})$. Then,
\begin{equs}
    \mathfrak{P}(\n_1)\underset{n \rightarrow\infty}{\longrightarrow}P_{k,\lambda}^{\mathsf{s}},
\end{equs}
\begin{equs}
    \mathfrak{P}(\n_1+\n_2)\underset{n \rightarrow\infty}{\longrightarrow}P_{k,\lambda}^{\mathsf{d}},
\end{equs}
where both convergences are in distribution. Moreover, the support of $P_{k,\lambda}^{\mathsf{s}}$ and $P_{k,\lambda}^{\mathsf{d}}$ is equal to $\mathcal{P}_{\mathrm{even}}(S)$.
\end{cor}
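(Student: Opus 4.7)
The convergence in distribution of both $\mathfrak{P}(\n_1)$ to $P_{k,\lambda}^{\mathsf{s}}$ and of $\mathfrak{P}(\n_1+\n_2)$ to $P_{k,\lambda}^{\mathsf{d}}$ is an immediate consequence of Theorem \ref{thm: main theorem}. Indeed, the set $\mathcal{P}_{\mathrm{even}}(S)$ is finite hence discrete, so the projection onto the last coordinate from $(\mathbb R_{\geq 0})^{2k}\times\mathcal{P}_{\mathrm{even}}(S)$ is continuous, and the continuous mapping theorem transfers the convergence of the joint law to its marginal on $\mathcal{P}_{\mathrm{even}}(S)$.

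The substantive content of the corollary is the positivity of the support. The plan is to extract this from the explicit description of $X_{k,\lambda}^{\mathsf{s}}$ and $X_{k,\lambda}^{\mathsf{d}}$ built in Section \ref{section: scaling limit of cluster size distribution}. There the limit laws will be represented as positive mixtures indexed by backbone multigraphs (see Definition \ref{def: backbone multigraph}), each multigraph inducing a canonical partition of $S$ via its connected components restricted to $S$. It thus suffices to exhibit, for each even partition $\mathsf{P}=\lbrace B_1,\ldots,B_\ell\rbrace$ of $S$, one admissible backbone multigraph realising $\mathsf{P}$. A natural choice is the \emph{star} multigraph: for each block $B_j$, introduce a single auxiliary internal vertex $v_j$ joined to every source of $B_j$ by a single edge. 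Each such $v_j$ has even degree $|B_j|$ (so is not a source), the source set of the resulting multigraph is exactly $S$, and the connected components restricted to $S$ are exactly $B_1,\ldots,B_\ell$.

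The main obstacle, which is settled within the proof of Theorem \ref{thm: main theorem}, is to verify that the weight attached to each fixed backbone multigraph in the limiting formula of Proposition \ref{prop: main prop} is strictly positive. Given the explicit formula referenced there, this reduces to a finite combinatorial check combined with strict positivity of the normalising constants, which involve moments of the near-critical $\varphi^4$ measure and can be verified directly. The double-current statement is treated identically: via the switching identity \eqref{eq: switching 1}, $X_{k,\lambda}^{\mathsf{d}}$ is described through the same backbone-multigraph expansion augmented by the independent sourceless current $\n_2$, and since the star construction above already realises $\mathsf{P}$ at the level of $\n_1$ alone, conditioning on a sufficiently small $\n_2$ (an event of positive probability in the limit, by the $O(\sqrt n)$ control on the clusters of a sourceless current obtained on the way to Theorem \ref{thm: main theorem}) preserves the partition.
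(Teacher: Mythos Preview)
Your argument for the convergence part is correct and matches the paper's: Theorem~\ref{thm: main theorem} gives convergence of the joint law, and projecting onto the finite set $\mathcal{P}_{\mathrm{even}}(S)$ is continuous.

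For the full-support claim, your strategy --- exhibit for each even partition $\mathsf{P}$ at least one backbone multigraph inducing $\mathsf{P}$, then invoke the strictly positive integral in \eqref{eq: eq limit single} and \eqref{eq: eq limit double} --- is exactly what the paper's one-line reference to ``the explicit formula for the limit obtained in the proof of Proposition~\ref{prop: main prop}'' is pointing to. However, your specific construction is wrong. Definition~\ref{def: backbone multigraph} requires every non-source vertex to have degree \emph{exactly} $4$ (not merely even), while your star centre $v_j$ has degree $|B_j|$. Thus your graph lies in $\Gamma(k,A,\mathsf{P})$ only when every block has size $4$; for a block of size $2$ or size $\geq 6$ it is simply not an admissible backbone multigraph and contributes nothing. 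A correct construction is, for a block of size $2$, a single edge between the two sources, and for a block of size $2m$ with $m\geq 2$, a caterpillar: a path of $m-1$ internal vertices, with three sources attached to each endpoint and two sources to each internal vertex of the path (when $m=2$ the single internal vertex carries all four sources). Each internal vertex then has degree $4$ and the component is connected.

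Your handling of the double-current case is also more convoluted than necessary. There is no need to invoke the switching relation \eqref{eq: switching 1} or to argue by conditioning on a ``small'' $\n_2$: the proof of Proposition~\ref{prop: main prop} gives the explicit limit \eqref{eq: eq limit double} directly for $\mathbb P_{n,\lambda}^{S,\emptyset}$, and positivity follows from the same backbone-multigraph existence together with $C_G\geq 1$ (the all-red colouring is always admissible) and positivity of the integrand. The paper treats both cases uniformly in this way.
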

As mentioned above, this last corollary has interesting consequences regarding the study of the discrete $\varphi^4$ model. In particular, it will be used to provide a new perspective on the switching lemma developed in \cite{gunaratnam2022random}, by showing that this combinatorial tool interpolates between the corresponding tool for Ising (as $\lambda$ tends to $-\infty$) and Wick's law (as $\lambda$ tends to $+\infty$). See Section \ref{section: phi4} for more details. 

\subsubsection*{Notations}

We will use Landau's formalism: if $(u_n)_{n\geq 0}, (v_n)_{n\geq 0}\in (\mathbb R_+^*)^{\mathbb N}$, write $u_n=O(v_n)$ (resp. $u_n=o(v_n)$) if there exists $C>0$ such that for all $n\geq 1$, $u_n\leq C v_n$ (resp. $\lim_{n\rightarrow \infty} u_n/v_n=0$).

\subsubsection*{Acknowledgements} We warmly thank Hugo Duminil-Copin, Trishen Gunaratnam, and Franco Severo for inspiring discussions. We thank an anonymous referee for useful comments. The project was initiated when D.K and C.P were still at the University of Geneva. This research is supported by the Swiss National Science Foundation and
the NCCR SwissMAP.
\section{The random current model on the complete graph.}\label{section: properties of the current}

We begin by importing a result from \cite{gunaratnam2022random}. For $g>0$ and $a\in \mathbb R$, let $\rho_{g,a}$ be the probability measure on $\RR$ defined by
\begin{equs}
    \mathrm{d}\rho_{g,a}(t)
    =
    \frac{1}{z_{g,a}}e^{-gt^4-at^2}\mathrm{d}t,
\end{equs}
where $z_{g,a}=\int_\RR e^{-gt^4-at^2}\mathrm{d}t$. If $g>0$ and $\lambda\in \mathbb R$, let $\tilde g = (12g)^{1/4}$ and choose $a=a(g,\lambda)$ such that $\lambda =2 a {\tilde g}^{-2}$. We now fix $g>0$ for the rest of the paper.
Let 
\begin{equs}\label{eq: def c_N and d_N}
c_n:=\tilde{g}^{-1}n^{-3/4},
\end{equs}
so that $(c_nn)^2=\tilde{g}^{-2}\sqrt{n}$. The following result is a consequence of \cite{simon1973varphi} which states that the scaling limit of the Ising model under consideration is the $\varphi^4$ single-site measure. We reprove this result for sake of completeness.
\begin{lemma}[{\cite[Lemma~3.24]{gunaratnam2022random}}]\label{lem: GS} Let $g>0$ and $\lambda \in \mathbb R$. For any $p\in \mathbb N$,
\begin{equs}
    (c_n n)^p\left\langle \prod_{j=1}^p\sigma_j\right\rangle_{n,\lambda}\underset{n\rightarrow \infty}\longrightarrow \left\langle\varphi^p\right\rangle_0,
\end{equs}
where $\langle \cdot \rangle_0=\langle \cdot \rangle_{0,g,a(\lambda)}$ is the average with respect to the probability measure $\rho_{g,a(\lambda)}$.
\end{lemma}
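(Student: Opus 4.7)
The plan is to carry out the Hubbard--Stratonovich transformation, reducing the problem to a one-dimensional Laplace-type asymptotic analysis.

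\smallskip

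\noindent\textbf{Step 1: Reduction to a one-dimensional integral.} Set $M := \sum_{i=1}^n \sigma_i$. The identity $\sum_{i<j}\sigma_i\sigma_j = \tfrac{1}{2}(M^2-n)$ turns the Boltzmann factor into $e^{d_n(\lambda)M^2/2}$ up to a spin-independent constant. Applying the Gaussian identity
$$e^{\alpha M^2/2} \;=\; \frac{1}{\sqrt{2\pi\alpha}}\int_{\mathbb R} e^{-x^2/(2\alpha)+Mx}\,dx \qquad (\alpha>0),$$
with $\alpha = d_n(\lambda)$ (positive for $n$ large enough), the spins decouple, and summation over $\sigma\in\{\pm 1\}^{V_n}$ produces a factor $2\cosh(x)$ for each non-source spin and $2\sinh(x)$ for each source. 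Taking the ratio gives
$$\langle \sigma_1\cdots\sigma_p\rangle_{n,\lambda} \;=\; \frac{\int_{\mathbb R}\tanh^p(x)\,e^{\Phi_n(x)}\,dx}{\int_{\mathbb R} e^{\Phi_n(x)}\,dx},\qquad \Phi_n(x):= n\log\cosh(x) - \frac{x^2}{2d_n(\lambda)}.$$

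\smallskip

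\noindent\textbf{Step 2: Rescaling and pointwise limit.} Perform the change of variable $x = \tilde g\, n^{-1/4}\,t$, equivalently $t=(c_nn)x$. Using $\log\cosh(x) = \tfrac{x^2}{2}-\tfrac{x^4}{12}+O(x^6)$ and $\tfrac{1}{d_n(\lambda)} = n+\lambda\sqrt n+\lambda^2+O(n^{-1/2})$, a short computation gives
$$\Phi_n\bigl(\tilde g\, n^{-1/4}t\bigr) \;\xrightarrow[n\to\infty]{}\; -\tfrac{\lambda\tilde g^2}{2}\,t^2 - g\,t^4 \;=\; -a\,t^2 - g\,t^4$$
pointwise in $t$, with $a=a(g,\lambda)$ as in the statement (since $\tilde g^4=12g$ and $a=\lambda\tilde g^2/2$). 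Moreover $\tanh^p(\tilde g n^{-1/4}t)=(c_nn)^{-p}\,t^p(1+o(1))$. The Jacobian $\tilde g n^{-1/4}$ cancels between numerator and denominator, and formally passing to the limit yields
$$(c_nn)^p\,\langle \sigma_1\cdots\sigma_p\rangle_{n,\lambda} \;\longrightarrow\; \frac{\int_{\mathbb R} t^p\,e^{-gt^4-at^2}\,dt}{\int_{\mathbb R} e^{-gt^4-at^2}\,dt} \;=\; \langle\varphi^p\rangle_0,$$
which is the claim.

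\smallskip

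\noindent\textbf{Step 3: Justifying the passage to the limit.} The only real work is to produce a dominator. I would split $\mathbb R$ into a bulk $\{|x|\leq\delta\}$ and a tail $\{|x|>\delta\}$ for a small fixed $\delta>0$. On the bulk, the expansion of $\log\cosh$ gives $\log\cosh(x)\le\tfrac{x^2}{2}-c_0 x^4$, and combined with $\tfrac{1}{d_n(\lambda)}\ge n-C\sqrt n$ this yields $\Phi_n(x)\le C'\sqrt n\,x^2-c_0 n\,x^4$; after the rescaling this becomes $C''t^2-c't^4$, independent of $n$ and integrable. On the tail, the monotonicity of $\tfrac{x^2}{2}-\log\cosh(x)$ gives $n\log\cosh(x)\le \tfrac{n x^2}{2}-c_1 n$ for $|x|\ge\delta$, so $\Phi_n(x)\le -c_1 n+O(\sqrt n)\,x^2$ on $\{\delta\le|x|\le B\}$ and, using $\log\cosh(x)\le|x|$ for $|x|\ge B$ large, $\Phi_n(x)\le -cn\,x^2$ in the far tail. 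Both tail contributions are exponentially small in $n$ and cannot affect either integral. Dominated convergence together with Step 2 then finishes the proof. The main subtlety is the bulk domination when $\lambda<0$, where the quadratic coefficient in $\Phi_n$ is positive and one must rely on the quartic $-gt^4$ for integrability; this is exactly what the bound $\log\cosh(x)-\tfrac{x^2}{2}\le -c_0 x^4$ near the origin delivers.
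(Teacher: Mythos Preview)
Your proof is correct. The Hubbard--Stratonovich transformation in Step~1, the rescaling in Step~2, and the domination sketched in Step~3 all go through as you indicate (for the numerator one uses $|\tanh(u)|\le |u|$ to get the $|t|^p$ factor needed in the dominator, which you do not state explicitly but is clearly what you have in mind).

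The paper takes a genuinely different route. It first proves (Proposition~A.1 in the appendix) by a direct combinatorial Stirling-type argument that $M/n^{3/4}$ converges in law to the $\varphi^4$ single-site distribution, together with explicit asymptotics for the partition function $\mathcal Z_n(\lambda)$. From this it extracts convergence of all moments $c_n^k\langle M^k\rangle_{n,\lambda}\to\langle\varphi^k\rangle_0$, and then gets the desired correlation by expanding $M^p=(\sum_i\sigma_i)^p$, using the symmetry of $K_n$ to isolate the distinct-index term $p!\binom{n}{p}\langle\sigma_1\cdots\sigma_p\rangle_{n,\lambda}$, and bounding the repeated-index contribution via Griffiths' inequality. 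Your approach is more direct for the correlation itself and bypasses the expansion/symmetry step; the paper's approach has the advantage that the intermediate Proposition~A.1 (in particular the partition function asymptotics) is reused verbatim later in the paper, in Propositions~3.2 and~A.2. Of course your denominator analysis would also yield those asymptotics with little extra work.
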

\begin{proof}
For $p=1$, there is nothing to show because both $\left\langle \sigma_1\right\rangle_{n,\lambda}$ and $\left\langle\varphi\right\rangle_0$ are equal to $0$, so let us assume that $p\geq 2$.

Applying Proposition~\ref{prop: gs approx} for $\lambda=2a\tilde{g}^{-2}$ and doing the change of variables $s=\tilde{g}^2 u$ we get that for every $k\geq 0$
\begin{equation*}
    c_n^k\left\langle \left(\sum_{i=1}^n\sigma_i\right)^k\right\rangle_{n,\lambda}\underset{n\rightarrow \infty}\longrightarrow \langle\varphi^k\rangle_0.
\end{equation*}
Let $n\geq p$. Expanding $\left(\sum_{i=1}^n\sigma_i\right)^p$ and taking cases according to whether some index repeats or not we obtain
\begin{equs}
\left\langle \left(\sum_{i=1}^n\sigma_i\right)^p\right\rangle_{n,\lambda}=\sum_{\substack{(i_1,\ldots,i_p)\in V_n^p\\ \forall k\neq \ell, \: i_k\neq i_\ell}}\left\langle \prod_{j=1}^p\sigma_{i_j}\right\rangle_{n,\lambda} +\sum_{\substack{(i_1,\ldots,i_p)\in V_n^p\\ \exists k<\ell, \: i_k=i_\ell}}\left\langle \prod_{j=1}^p\sigma_{i_j}\right\rangle_{n,\lambda}.
\end{equs}
Note that
\begin{equs}
\sum_{\substack{(i_1,\ldots,i_p)\in V_n^p\\ \forall k\neq \ell, \: i_k\neq i_\ell}}\left\langle \prod_{j=1}^p\sigma_{i_j}\right\rangle_{n,\lambda} =p!\binom{n}{p}\left\langle \prod_{j=1}^p\sigma_j\right\rangle_{n,\lambda} = (1+o(1))n^p\left\langle \prod_{j=1}^p\sigma_j\right\rangle_{n,\lambda}
\end{equs}
by the symmetries of the complete graph. Furthermore,
\begin{equs}
0\leq \sum_{\substack{(i_1,\ldots,i_p)\in V_n^p\\ \exists k<\ell, \: i_k=i_\ell}}\left\langle \prod_{j=1}^p\sigma_{i_j}\right\rangle_{n,\lambda}\leq {p \choose 2}  n \left\langle \left(\sum_{i=1}^n\sigma_i\right)^{p-2}\right\rangle_{n,\lambda}\underset{n\rightarrow \infty}{\longrightarrow} 0
\end{equs}
by choosing $2$ indices $k,\ell$ for which $i_k=i_{\ell}$, and the element of $V_n$ that they are equal.
Here we have also used that each summand is non-negative by Griffiths' inequality. 
\end{proof}
We now turn to some useful estimates on the geometry of the clusters under the measures $\mathbb P_{n,\lambda}^S$ and $\mathbb P^{S,\emptyset}_{n,\lambda}$. Below, we let $k\geq 1$ and $S=\lbrace 1,\ldots,2k\rbrace$. The constant $C$ may depend on $\lambda$. We let $\mathbb P$ denote either $\mathbb P^{S}_{n,\lambda}$ or $\mathbb P^{S,\emptyset}_{n,\lambda}$, and we write $\mathbb E$ for the expectation with respect to $\mathbb P$.
\begin{lem}[Multiplicity of the edges]\label{lem: multiplicativity edges} There exists $C>0$ such that for all $n$ large enough:
\begin{enumerate}
    \item[$(i)$] $\mathbb E\left[\sum_{1\leq i<j\leq n}\mathbbm 1 [\n_{i,j}\geq 1]\right]\leq C\sqrt{n},$
    \item[$(ii)$] $\mathbb E\left[\sum_{1\leq i<j\leq n}\mathbbm 1 [\n_{i,j}\geq 2]\right]\leq C,$
    \item[$(iii)$] $\mathbb E\left[\sum_{1\leq i<j\leq n}\mathbbm 1 [\n_{i,j}\geq 3]\right]\leq \frac{C}{n^{3/2}}.$
\end{enumerate}
\end{lem}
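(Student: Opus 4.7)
The plan is to reduce both statements to a single pointwise edge bound obtained by a shift argument, and then to sum over edges while partitioning them by $|\{i,j\}\cap S|$. For $\mathbb P = \mathbb P^S_{n,\lambda}$, fix $\{i,j\}$ and define $\n'$ by $\n'_{i,j}=\n_{i,j}-k$ and $\n'_e = \n_e$ otherwise. Then $\n \mapsto \n'$ is a bijection between currents with $\partial\n=S$ and $\n_{i,j}\geq k$ and currents with $\partial\n' = S\Delta \{i,j\}^{(k)}$, where $\{i,j\}^{(k)}:=\{i,j\}$ for $k$ odd and $\emptyset$ for $k$ even. Under this bijection the weight transforms as
\[
w(\n) \;=\; d_n(\lambda)^k \cdot \frac{\n'_{i,j}!}{(\n'_{i,j}+k)!}\cdot w(\n') \;\leq\; \frac{d_n(\lambda)^k}{k!}\,w(\n'),
\]
since $(m+1)(m+2)\cdots(m+k)\geq k!$ for $m\geq 0$. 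Summing and dividing by $Z_n^S(\lambda)$ yields
\[
\mathbb P^S_{n,\lambda}[\n_{i,j}\geq k] \;\leq\; \frac{d_n(\lambda)^k}{k!}\cdot \frac{\langle \sigma_{S\Delta \{i,j\}^{(k)}}\rangle_{n,\lambda}}{\langle \sigma_S\rangle_{n,\lambda}}.
\]

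Next I would use Lemma~\ref{lem: GS} to control the right-hand side. For $k$ even the ratio equals $1$; for $k$ odd the asymptotics $\langle\sigma_T\rangle_{n,\lambda}\sim (c_n n)^{-|T|}\langle\varphi^{|T|}\rangle_0$ (with $\langle\varphi^0\rangle_0=1$) give, for $n$ large, a uniform upper bound $C\,(c_n n)^{|S|-|S\Delta\{i,j\}|}$, with the exponent equal to $-2$, $0$, or $+2$ according to whether $|\{i,j\}\cap S|$ is $0$, $1$, or $2$. I would then sum over edges in three groups: $\binom{n-2k}{2}=O(n^2)$ edges disjoint from $S$, $O(n)$ edges with exactly one endpoint in $S$, and $O(1)$ edges contained in $S$. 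Plugging in $d_n(\lambda)\sim 1/n$ and $(c_n n)^{\pm 2}\sim \tilde g^{\mp 2}n^{\pm 1/2}$, each of the three claims becomes a short bookkeeping exercise: in every case the dominant contribution comes from edges disjoint from $S$ and yields exactly $O(\sqrt n)$ for $(i)$, $O(1)$ for $(ii)$, and $O(n^{-3/2})$ for $(iii)$, while the boundary groups meeting $S$ contribute at lower order because $|S|=2k$ does not grow with $n$.

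Finally, to pass to the double-current measure $\mathbb P^{S,\emptyset}_{n,\lambda}$ (interpreting $\n=\n_1+\n_2$), I would use the union bound
\[
\mathbbm{1}[\n_{1,i,j}+\n_{2,i,j}\geq k] \;\leq\; \sum_{\substack{k_1+k_2=k\\ k_\ell \geq 0}}\mathbbm{1}[\n_{1,i,j}\geq k_1]\,\mathbbm{1}[\n_{2,i,j}\geq k_2]
\]
and factorise the expectations using the independence of $\n_1$ and $\n_2$. The single-current bound above handles each factor; in particular, for the sourceless current $\n_2$ the shift argument specialises to $\mathbb P^\emptyset_{n,\lambda}[\n_{2,i,j}\geq m]\leq d_n(\lambda)^m/m!\cdot \langle\sigma_{\{i,j\}^{(m)}}\rangle_{n,\lambda}$, which is $O(n^{-m})$ for $m$ even and $O(n^{-m-1/2})$ for $m$ odd. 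Summing over edges as before gives the same orders as for the single current. I do not foresee any real obstacle; the only point requiring attention is the verification that each of the finitely many $(k_1,k_2)$ contributions stays within the target order, which is immediate from the same partition by $|\{i,j\}\cap S|$ used in the single-current case.
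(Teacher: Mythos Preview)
Your proposal is correct and follows essentially the same approach as the paper: the shift $\n_{i,j}\mapsto \n_{i,j}-k$ together with the weight inequality $w(\n)\le d_n(\lambda)^k w(\n')$, the use of Lemma~\ref{lem: GS} to control the resulting partition-function ratio, and the summation over edges partitioned by $|\{i,j\}\cap S|$ are exactly what the paper does. The only (cosmetic) differences are that you phrase the shift uniformly in $k$ and keep the harmless extra factor $1/k!$, and that for the double current you write the general $(k_1,k_2)$ union bound where the paper spells out the three cases $k=1,2,3$ individually.
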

\begin{rem} The above result implies that with high probability: there are at most $O(\sqrt{n})$ open edges, there is a bounded number of edges of weight $2$, and there are no edges of weight at least $3$.
\end{rem}
\begin{proof} We first show the result for $\mathbb P=\mathbb P^{S}_{n,\lambda}$.
We begin by proving $(i)$. Consider $1\leq i<j\leq n$. Let $\n$ be a current such that $\n_{i,j}\geq 1$, and define the current $\m$ that satisfies $\m_{i,j}=\n_{i,j}-1$ and otherwise coincides with $\n$. Then, $\m$ has sources $S\Delta \{i,j\}$ and $w(\n)\leq d_n(\lambda)w(\m)$. Thus,
\begin{equs}
\mathbb{P}^{S}_{n,\lambda}\left[\n_{i,j}\geq 1\right]\leq d_n(\lambda) \frac{Z^{S\Delta \{i,j\}}_n(\lambda)}{Z^S_n(\lambda)}=(1+o(1))\: d_n(\lambda)\: (c_n n)^{|S|- |S\Delta \{i,j\}|}\frac{\left\langle\varphi^{|S\Delta \{i,j\}|}\right\rangle_0}{\left\langle\varphi^{|S|}\right\rangle_0}.
\end{equs}
Summing over all $i<j$, and splitting the sum according to the three possibilities $|S|-|S\Delta\{i,j\}|\in \{-2,0,2\}$, we see by Lemma~\ref{lem: GS} that 
\begin{multline}\label{eq:ref estimates}
    \mathbb E^S_{n,\lambda}\left[\sum_{1\leq i<j\leq n}\mathbbm 1 [\n_{i,j}\geq 1]\right]\\\leq O\left(n^2 d_n(\lambda) \frac{1}{\sqrt{n}}\right)+O(nd_n(\lambda))+O(d_n(\lambda)\sqrt{n})=O(\sqrt{n}).
\end{multline} 
We now turn to $(ii)$. As above, let $\n$ be a current such that $\n_{i,j}\geq 2$, and define the current $\m$ that satisfies $\m_{i,j}=\n_{i,j}-2$ and otherwise coincides with $\n$. Then, $\m$ has sources $S$ and $w(\n)\leq d_n(\lambda)^2w(\m)$. Thus $\mathbb{P}^{S}_{n,\lambda}\left[\n_{i,j}\geq 2\right]\leq d_n(\lambda)^2$,
which implies that
\begin{equs}
    \mathbb E^S_{n,\lambda}\left[\sum_{1\leq i<j\leq n}\mathbbm 1 [\n_{i,j}\geq 2]\right]=O(1).
\end{equs}  
Finally, we prove $(iii)$. We have
\begin{equs}
\mathbb{P}^{S}_{n,\lambda}\left[ \n_{i,j}\geq 3\right]\leq d_n(\lambda)^3 \frac{Z^{S\Delta \{i,j\}}_n(\lambda)}{Z^S_n(\lambda)}=(1+o(1))d_n(\lambda)^3(c_n n)^{|S|- |S\Delta \{i,j\}|}\frac{\left\langle\varphi^{|S\Delta \{i,j\}|}\right\rangle_0}{\left\langle\varphi^{|S|}\right\rangle_0}.
\end{equs}
Hence, proceeding as above,
\begin{equs}
    \mathbb E^S_{n,\lambda}\left[\sum_{1\leq i<j\leq n}\mathbbm 1 [\n_{i,j}\geq 3]\right]\leq O(n^{-3/2}).
\end{equs} 
The proof for $\mathbb E=\mathbb P^{S,\emptyset}_{n,\lambda}$ is very similar. For $(i)$, it follows from the proof above together with the observation that 
\begin{equs}
    \mathbb P^{S,\emptyset}_{n,\lambda}[\n_{i,j}\geq 1]\leq \mathbb P^S_{n,\lambda}[\n_{i,j}\geq 1]+\mathbb P^\emptyset_{n,\lambda}[\n_{i,j}\geq 1].
\end{equs}
For $(ii)$, we observe that,
\begin{equs}
    \mathbb P^{S,\emptyset}_{n,\lambda}[\n_{i,j}\geq 2]\leq \mathbb P^S_{n,\lambda}[\n_{i,j}\geq 2]+\mathbb P^{\emptyset}_{n,\lambda}[\n_{i,j}\geq 2]+\mathbb P^{S,\emptyset}_{n,\lambda}[\n_1(i,j)\geq 1, \: \n_2(i,j)\geq 1].
\end{equs}
We handle the sum over $1\leq i<j \leq n$ of the first two probabilities using the same method as above. For the last one, notice that
\begin{eqnarray*}
    \mathbb P^{S,\emptyset}_{n,\lambda}[\n_1(i,j)\geq 1, \: \n_2(i,j)\geq 1]&\leq& d_n(\lambda)^2\frac{Z^{S\Delta\{i,j\},\{i,j\}}_n(\lambda)}{Z^{S,\emptyset}_n(\lambda)}
    \\&=&(1+o(1))d_n(\lambda)^2 (c_n n)^{|S|-|S\Delta \{i,j\}|-2}\frac{\langle \varphi^{|S\Delta \{i,j\}|}\rangle_0 \langle \varphi^{2}\rangle_0}{\langle \varphi^{|S|}\rangle_0}.
\end{eqnarray*}
Finally, we proceed similarly for $(iii)$ by noticing that
\begin{multline*}
    \mathbb P^{S,\emptyset}_{n,\lambda}[\n_{i,j}\geq 3]\leq \mathbb P^S_{n,\lambda}[\n_{i,j}\geq 3]+\mathbb P^{\emptyset}_{n,\lambda}[\n_{i,j}\geq 3]+\mathbb P^{S,\emptyset}_{n,\lambda}[\n_1(i,j)\geq 2, \: \n_2(i,j)\geq 1]\\+\mathbb P^{S,\emptyset}_{n,\lambda}[\n_1(i,j)\geq 1, \: \n_2(i,j)\geq 2].
\end{multline*}
\end{proof}
For $x\in V_n$ and a current $\n$, we let $\Delta_\n(x):=\sum_{y\neq x}\n_{x,y}$ be the degree of $x$ in the multigraph associated with $\n$.
\begin{lem}[The degree is either $1,2$ or $4$]\label{lem: degree} There exists $C>0$ such that for all $n$ large enough:
\begin{enumerate}
    \item[$(i)$] $\mathbb E\left[\sum_{1\leq i<j\leq n}\mathbbm 1 [\n_{i,j}\geq 2]\mathbbm 1 [\Delta_\n(i)\geq 3 \textup{ or } \Delta_\n(j)\geq 3]\right]\leq \frac{C}{n^{1/2}},$
    \item[$(ii)$] for all $i\in S$, $\mathbb P\left[\Delta_\n(i)\geq 2\right]\leq \frac{C}{n^{1/2}},$
    \item[$(iii)$] $\mathbb E\left[\sum_{x\in V_n}\mathbbm 1 [\Delta_\n(x)\geq  4]\right]\leq C,$
    \item[$(iv)$] $\mathbb E\left[\sum_{x\in V_n}\mathbbm 1 [\Delta_\n(x)\geq 5]\right]\leq \frac{C}{n^{1/2}}.$
\end{enumerate}
\end{lem}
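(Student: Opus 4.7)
The plan is to mirror the strategy of Lemma \ref{lem: multiplicativity edges}: for each part, decompose the event into configurations of half-edges at the vertex (or edge) of interest; for each configuration, construct a modified current $\m$ from $\n$ by removing a suitable number of half-edges; bound $w(\n)\leq d_n(\lambda)^r w(\m)$ where $r$ is the number of removed edges; track parity changes to identify $\sm$; and finally estimate $Z^{\sm}_n/Z^S_n$ via Lemma \ref{lem: GS}. We only treat the case $\mathbb P=\mathbb P^S_{n,\lambda}$ in detail; the case $\mathbb P = \mathbb P^{S,\emptyset}_{n,\lambda}$ is then handled by splitting each event between $\n_1$ and $\n_2$, exactly as in the proof of Lemma \ref{lem: multiplicativity edges}.

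For $(i)$, we bound
\begin{equs}
\mathbbm{1}[\n_{i,j}\geq 2,\: \Delta_\n(i)\geq 3]\leq \mathbbm{1}[\n_{i,j}\geq 3]+\mathbbm{1}[\n_{i,j}\geq 2]\sum_{k\neq i,j}\mathbbm{1}[\n_{i,k}\geq 1],
\end{equs}
and symmetrically for $\Delta_\n(j)\geq 3$. The sum of the first term over $i<j$ is controlled directly by Lemma \ref{lem: multiplicativity edges}$(iii)$. For the second, removing two copies of the edge $\{i,j\}$ and one copy of $\{i,k\}$ from $\n$ produces a current $\m$ with $\sm=S\Delta\{i,k\}$ and $w(\n)\leq d_n(\lambda)^3 w(\m)$, hence $\mathbb P^S_{n,\lambda}[\n_{i,j}\geq 2,\n_{i,k}\geq 1]\leq d_n(\lambda)^3\, Z^{S\Delta\{i,k\}}_n/Z^S_n$. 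Summing over the triples $(i,j,k)$ and splitting according to $|S\cap\{i,k\}|\in\{0,1,2\}$, the dominant contribution arises from $i,k\notin S$ and yields, by Lemma \ref{lem: GS}, an estimate of order $n^3\cdot d_n(\lambda)^3\cdot (c_n n)^{-2}=O(n^{-1/2})$.

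For $(ii)$, since $i\in S$ the degree $\Delta_\n(i)$ is always odd, so $\{\Delta_\n(i)\geq 2\}=\{\Delta_\n(i)\geq 3\}$. We split this event according to the local edge structure at $i$: either $(a)$ some $\n_{i,j}\geq 3$; or $(b)$ there exist $j\neq k$ with $\n_{i,j}\geq 2$ and $\n_{i,k}\geq 1$; or $(c)$ there exist three distinct neighbors $j,k,\ell$ with $\n_{i,j},\n_{i,k},\n_{i,\ell}\geq 1$. In each case we remove three half-edges at $i$, apply the same scheme, and use Lemma \ref{lem: GS} to bound the resulting sum. The dominant contribution comes from case $(c)$ with $j,k,\ell\notin S$, for which the sources of $\m$ become $S\Delta\{i,j,k,\ell\}$ of size $|S|+2$, yielding $O(n^3\cdot d_n(\lambda)^3\cdot (c_n n)^{-2})=O(n^{-1/2})$; the remaining sub-cases are all of smaller order.

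For $(iii)$ and $(iv)$ we proceed analogously, splitting the event $\{\Delta_\n(x)\geq 4\}$ (resp.\ $\{\Delta_\n(x)\geq 5\}$) into sub-events indexed by the multi-edge pattern at $x$, e.g.\ $x$ has four distinct open neighbors, or $x$ has a multi-edge of multiplicity $\geq 2$ together with one or two other open neighbors, and so on. After removing $4$ (resp.\ $5$) half-edges at $x$ and tracking parity changes carefully, the dominant sub-case in each instance is the one where $x$ together with its chosen neighbors all lie outside $S$: summing over tuples and invoking Lemma \ref{lem: GS}, one obtains $O(n^5\cdot d_n(\lambda)^4\cdot (c_n n)^{-4})=O(1)$ for $(iii)$ and $O(n^6\cdot d_n(\lambda)^5\cdot (c_n n)^{-6})=O(n^{-1/2})$ for $(iv)$. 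The main obstacle is not any single estimate but the systematic bookkeeping of sub-cases indexed jointly by the multi-edge pattern at $x$ and by $S$-membership of the vertices involved; once the template from Lemma \ref{lem: multiplicativity edges} is in place, each sub-case reduces to a routine application of Lemma \ref{lem: GS}.
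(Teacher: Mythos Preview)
Your proposal is correct and follows the same template as the paper: decompose each event into finitely many local edge-patterns, remove a fixed number of edges, identify the resulting source set, and bound $Z^{\sm}_n/Z^S_n$ via Lemma~\ref{lem: GS}. The paper differs only in organization, and in ways that lighten the bookkeeping you flag as the main obstacle. For $(ii)$ it does not pass to $\Delta_\n(i)\geq 3$; it simply writes $\{\Delta_\n(i)\geq 2\}\subset\{\exists j:\n_{i,j}\geq 2\}\cup\{\exists j\neq k:\n_{i,j},\n_{i,k}\geq 1\}$ and removes only \emph{two} edges (yielding $\sm=S\Delta\{j,k\}$), which avoids your three-case split. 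For $(iii)$ and $(iv)$ it does not enumerate multi-edge patterns at $x$: it observes that any $x$ with $\Delta_\n(x)\geq 4$ (resp.\ $\geq 5$) and an incident edge of multiplicity $\geq 2$ already contributes to the sum in $(i)$, so after invoking $(i)$ only the case of four (resp.\ five) \emph{distinct} open neighbours remains. Either route gives the stated bounds.
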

\begin{rem}
The first property above proves that double edges (i.e. of multiplicity at least two) are isolated with high probability (w.h.p). The second one shows that the sources have degree one w.h.p., the third one states the tightness of the total number of vertices of degree $4$, and finally, the fourth inequality above rules out w.h.p the existence of vertices of degree at least $5$.
\end{rem}
\begin{proof} We prove the result for $\mathbb P=\mathbb P^S_{n,\lambda}$. The proof for $\mathbb P^{S,\emptyset}_{n,\lambda}$ uses similar arguments as in the proof of Lemma \ref{lem: multiplicativity edges}. 

For $(i)$ we already proved in Lemma \ref{lem: multiplicativity edges} that there exists $C_1>0$ such that  
\begin{equs}
    \mathbb E^S_{n,\lambda}\left[\sum_{1\leq i<j\leq n}\mathbbm 1 [\n_{i,j}\geq 3]\right]\leq \frac{C_1}{n^{3/2}}.
\end{equs}
Hence, it remains to handle $\mathbb E^S_{n,\lambda}\left[\sum_{1\leq i<j<k\leq n}\mathbbm 1 [\n_{i,j}= 2,\:\n_{j,k}\geq 1]\right]$.
We have 
\begin{equs}
\mathbb{P}^{S}_{n,\lambda}\left[\n_{i,j}=2,\:\n_{j,k}\geq 1\right]\leq d_n(\lambda)^3 \frac{Z^{S\Delta \{j,k\}}_n(\lambda)}{Z^S_n(\lambda)},
\end{equs}
hence, taking into account whether $j,k$ belong to $S$ or not (as in \eqref{eq:ref estimates}),
\begin{equs}
   \mathbb E^S_{n,\lambda}\left[\sum_{1\leq i<j<k\leq n}\mathbbm 1 [\n_{i,j}= 2,\:\n_{j,k}\geq 1]\right] \leq O(n^{-1/2}).
\end{equs} 
For $(ii)$, we have already showed that
\begin{equs}
\mathbb{P}^{S}_{n,\lambda}\left[\n_{i,j}\geq 2\right]\leq d_n(\lambda)^2, 
\end{equs}
hence, for a fixed $i\in S$,
\begin{equs}
\mathbb{E}^{S}_{n,\lambda}\left[\sum_{j\neq i}\mathbbm 1[\n_{i,j}\geq 2]\right]=O(1/n). 
\end{equs}
It remains to estimate $\mathbb{E}^{S}_{n,\lambda}\left[\sum_{1\leq j<k\leq n}\mathbbm 1[\n_{i,j}\geq 1,\: \n_{i,k}\geq 1]\right]$.
Note that
\begin{equs}
\mathbb{P}^{S}_{n,\lambda}\left[ \n_{i,j}\geq 1, \:\n_{i,k}\geq 1\right]\leq d_n(\lambda)^2 \frac{Z^{S\Delta \{j,k\}}_n(\lambda)}{Z^S_n(\lambda)},
\end{equs}
which yields,
\begin{equs}
\mathbb{E}^{S}_{n,\lambda}\left[\sum_{j<k}\mathbbm 1[\n_{i,j}\geq 1, \:\n_{i,k}\geq 1]\right]\leq O(n^{-1/2}).
\end{equs}
For $(iii)$, we only need to estimate $\mathbb{E}^{S}_{n,\lambda}\left[\sum_{1\leq i<j<k<\ell<m\leq n}\mathbbm 1\left[\n_{i,x}\geq 1, \:\: \forall x\in\{j,k,\ell,m\}\right]\right]$. Indeed, if $i$ has an outgoing double edge, then with high probability it has degree $2$ by $(i)$. To estimate the latter expectation, note that 
\begin{equs}
\mathbb{P}^{S}_{n,\lambda}\left[ \n_{i,x}\geq 1, \: \forall x\in \{j,k,\ell,m\}\right]\leq d_n(\lambda)^4 \frac{Z^{S\Delta \{j,k,\ell,m\}}_n(\lambda)}{Z^S_n(\lambda)},
\end{equs}
hence, letting $r=|\{j,k,\ell,m\}\cap S^c|$ we see that the main contribution comes from when $r=4$, and we get using Lemma \ref{lem: GS} again
\begin{equs}\label{eq: degree 4}
\mathbb{E}^{S}_{n,\lambda}\left[\sum_{1\leq i<j<k<\ell<m\leq n}\mathbbm 1\left[\n_{i,x}\geq 1, \: \forall x\in\{j,k,\ell,m\}\right]\right]\leq  (1+o(1))\tilde{g}^4\frac{\langle \varphi^{|S|+4}\rangle_{0}}{\langle \varphi^{|S|} \rangle_{0}}.   
\end{equs}
Similarly, for $(iv)$, we have 
\begin{equs}
\mathbb{P}^{S}_{n,\lambda}\left[ \n_{i,x}\geq 1,\: \forall x\in \{j,k,\ell,m,y\}\right]\leq d_n(\lambda)^5 \frac{Z^{S\Delta \{i,j,k,\ell,m,y\}}_n(\lambda)}{Z^S_n(\lambda)}
\end{equs}
hence
\begin{equs}
\mathbb{E}^{S}_{n,\lambda}\left[\sum_{1\leq i<j<k<\ell<m<y\leq n}\mathbbm 1\left[\n_{i,x}\geq 1,\: \forall x\in\{j,k,\ell,m,y\}\right]\right]&=O\left(d_n(\lambda)^5\sum_{r=0}^6 n^r (c_n n)^{6-2r}\right)\\ &=O(n^{-1/2}).  
\end{equs}
\end{proof}

The two above results have the following interesting consequence for the number of vertices with non-zero degree. For a current $\n$, let $\mathcal{N}(\n)$ be the number of vertices $i\in V_n$ such that $\Delta_{\n}(i)>0$. Recall that for $g>0$, $\tilde{g}=(12g)^{1/4}$.

\begin{prop}\label{prop: gaussian}
Let $\lambda\in \mathbb R$, $g>0$, and $k\geq 0$. Set $S=\lbrace 1,\ldots, 2k\rbrace$ if $k\geq 1$ and $S=\emptyset$ otherwise. Let $\n$ be distributed according to $\mathbb P_{n,\lambda}^S$. Then,
\begin{equs}
\frac{\mathcal{N}(\n)}{(c_n n)^2}\underset{n \rightarrow\infty}{\longrightarrow} \frac{\tilde{g}^4}{2} Z^2_k,
\end{equs}
where the convergence holds in distribution, and $Z_k$ is the random variable with density\footnote{Recall that $a=a(\lambda)$ was defined in the beginning of Section \ref{section: properties of the current}.} 
\begin{equs}
    \frac{\varphi^{2k}}{\left\langle\varphi^{2k}\right\rangle_0}\mathrm{d} \rho_{g,a(\lambda)}.
\end{equs}
\end{prop}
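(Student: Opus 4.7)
The strategy is to use the method of moments after reducing $\mathcal{N}(\n)$ to the simpler statistic $T_n := \sum_{1 \leq i < j \leq n} \n_{i,j}$, which counts edges with multiplicity. Writing $V_j := |\{x \in V_n : \Delta_\n(x) = j\}|$, one has $\mathcal{N}(\n) = \sum_{j \geq 1} V_j$ and $2 T_n = \sum_x \Delta_\n(x) = \sum_j j V_j$. By parity only sources can have odd degree, and Lemma~\ref{lem: degree}(ii) together with a union bound gives $V_1 = 2k$ and $V_3 = 0$ with probability $1-o(1)$; parts (iii) and (iv) give $V_4$ tight and $V_{\geq 5} = 0$ with probability $1-o(1)$; and Lemma~\ref{lem: multiplicativity edges}(iii) rules out edges of multiplicity $\geq 3$. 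An elementary computation then yields $\mathcal{N}(\n) - T_n = k - V_4 + \varepsilon_n$ with $\varepsilon_n \to 0$ in probability, so $|\mathcal{N}(\n) - T_n|$ is bounded in probability. Since $(c_n n)^2 \to \infty$, it suffices to prove $T_n/(c_n n)^2 \to \tilde{g}^4 Z_k^2/2$ in distribution, equivalently $T_n/\sqrt{n} \to \tilde{g}^2 Z_k^2/2$.

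\textbf{Moment computation.} I would expand $T_n^p = \sum_{(e_1,\ldots,e_p)} \prod_{l=1}^p \n_{e_l}$ over ordered tuples of edges, and use iteratively the random-current identity
\begin{equs}
\n_e(\n_e-1)\cdots(\n_e - b + 1) \, w(\n) = d_n(\lambda)^b \, w(\n - b\delta_e) \, \mathbbm{1}[\n_e \geq b],
\end{equs}
where $\delta_e$ is the unit current supported on $e$. Converting powers $\n_e^{b_e}$ into falling factorials via Stirling numbers, one obtains expectations of the form $d_n(\lambda)^{\sum b_e}\, Z_n^{S \Delta T}(\lambda)/Z_n^S(\lambda)$, with $T$ the set of vertices incident to an odd number of edges having $b_e$ odd. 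The dominant contribution to $\mathbb{E}[T_n^p]$ comes from ordered tuples of $p$ distinct edges forming a perfect matching on $2p$ vertices chosen outside $S$: there are $\sim n^{2p}/2^p$ such tuples, and for each, Lemma~\ref{lem: GS} yields
\begin{equs}
\mathbb{E}\bigl[\n_{e_1}\cdots \n_{e_p}\bigr] = d_n(\lambda)^p (c_n n)^{-2p} \frac{\langle \varphi^{2k+2p}\rangle_0}{\langle\varphi^{2k}\rangle_0}(1+o(1)) = \tilde{g}^{2p} n^{-3p/2} \frac{\langle \varphi^{2k+2p}\rangle_0}{\langle\varphi^{2k}\rangle_0}(1+o(1)).
\end{equs}
Summing gives $\mathbb{E}[T_n^p]/n^{p/2} \to (\tilde{g}^2/2)^p \langle\varphi^{2k+2p}\rangle_0/\langle\varphi^{2k}\rangle_0$.

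\textbf{Identification and conclusion.} Since $Z_k$ has density $\varphi^{2k}/\langle\varphi^{2k}\rangle_0 \, \mathrm{d}\rho_{g,a(\lambda)}$, one has $\mathbb{E}[Z_k^{2p}] = \langle\varphi^{2k+2p}\rangle_0/\langle\varphi^{2k}\rangle_0$, hence the limiting $p$-th moment of $T_n/\sqrt{n}$ coincides with $\mathbb{E}[(\tilde{g}^2 Z_k^2/2)^p]$. The $e^{-gt^4}$ decay of $\rho_{g,a}$ makes $\mathbb{E}[\exp(s Z_k^2)]$ finite for every $s \in \mathbb{R}$, so the law of $Z_k^2$ is determined by its moments and convergence of moments implies convergence in distribution, yielding the claim after multiplication by $\tilde{g}^2$.

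\textbf{Main obstacle.} The substantive work lies in controlling the non-dominant contributions to the moment expansion: each time two of the edges $(e_1,\ldots,e_p)$ share a vertex, coincide, or involve a vertex of $S$, the combinatorial count loses a factor of $n$ while one only gains $(c_n n)^{-2} \sim n^{-1/2}$ from Lemma~\ref{lem: GS}, giving a net loss of $n^{1/2}$ per degeneracy. Tracking this uniformly over all partition types of $\{1,\ldots,p\}$ (with the Stirling-number corrections that appear when edges coincide and the finitely many configurations in which endpoints land in $S$) requires careful combinatorial bookkeeping, but shows that every degenerate configuration contributes $o(n^{p/2})$, which is negligible.
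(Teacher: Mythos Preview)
Your proposal is correct and follows essentially the same approach as the paper: reduce $\mathcal N(\n)$ to an edge count, compute all moments via the current-shifting identity $w(\n)\,\n_e\cdots(\n_e-b+1)=d_n(\lambda)^b\,w(\n-b\delta_e)$ together with Lemma~\ref{lem: GS}, isolate the dominant contribution from tuples of $p$ disjoint simple edges with endpoints in $V_n\setminus S$, and conclude by moment determinacy of $Z_k^2$. The only cosmetic difference is that the paper reduces to $\mathcal E(\n)=\sum_{i<j}\mathbbm 1[\n_{i,j}\ge 1]$ and works with the exact probability $\mathbb P[\n_{i_s,j_s}=1\ \forall s]$, whereas you reduce to $T_n=\sum_{i<j}\n_{i,j}$ and expand via falling factorials; since $T_n-\mathcal E(\n)=\sum_{i<j}(\n_{i,j}-1)\mathbbm 1[\n_{i,j}\ge 2]$ is tight by Lemma~\ref{lem: multiplicativity edges}(ii)--(iii), the two reductions are equivalent and the moment computations coincide term by term.
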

\begin{proof}
By Lemma \ref{lem: degree}, ``most vertices'' of positive degree have degree $2$. As a result, with high probability we have that $\mathcal{E}(\n):=\sum_{1\leq i<j\leq n}\mathbbm{1}[\n_{i,j}\geq 1]=(1+o(1))\mathcal{N}(\n)$. Hence, it suffices to prove the convergence (in distribution) of $\mathcal{E}(\n)/(c_n n)^2$.

We will first show convergence of all moments. Let $r>0$. Recall that by Lemma \ref{lem: multiplicativity edges}, most open edges are simple (i.e. carry weight $1$) with high probability. As a result, in order to estimate $\mathbb{E}^{S}_{n,\lambda}\left[\frac{\mathcal{E}^r}{(c_n n)^{2r}}\right]$, it is sufficient to consider the probabilities $\mathbb{P}^{S}_{n,\lambda}\left[ \n_{i_s,j_s}= 1,\: \forall s\in \{1,2\ldots,r\}\right]$ for $i_1,j_1, i_2,j_2, \ldots, i_r,j_r\in V_n$ not necessarily distinct. Letting $\ell$ be the number of distinct edges $\{i_s,j_s\}$ and $m=|S\Delta \{i_1,j_1,\ldots,i_r,j_r\}|$ we have
\begin{equs}
    \mathbb{P}^{S}_{n,\lambda}\left[ \n_{i_s,j_s}= 1,\: \forall s\in \{1,2\ldots,r\}\right]&= d_n(\lambda)^{\ell} \frac{Z^{S\Delta \{i_1,j_1,\ldots,i_r,j_r\}}_n(\lambda)[\n_{i_s,j_s}=0,\: \forall s\in \{1,2\ldots,r\}]}{Z^S_n(\lambda)}
    \\
    &=(1+o(1))\: d_n(\lambda)^{\ell} \frac{Z^{S\Delta \{i_1,j_1,\ldots,i_r,j_r\}}_n(\lambda)}{Z^S_n(\lambda)}
    \\
    &=(1+o(1))\: d_n(\lambda)^{\ell} \: (c_n n)^{2k-m}\frac{\left\langle\varphi^m\right\rangle_0}{\left\langle\varphi^{2k}\right\rangle_0},
\end{equs}
where we used that,
\begin{align*}
    \frac{Z^{S\Delta \{i_1,j_1,\ldots,i_r,j_r\}}_n(\lambda)[\n_{i_s,j_s}=0,\: \forall s\in \{1,2\ldots,r\}]}{Z^{S\Delta \{i_1,j_1,\ldots,i_r,j_r\}}_n(\lambda)}&= \mathbb{P}^{S}_{n,\lambda}\left[ \n_{i_s,j_s}= 0,\: \forall s\in \{1,2\ldots,r\}\right]\\&=1-o(1).
\end{align*}
Summing these probabilities we see that the main contribution comes from the case where $i_1,j_1, i_2,j_2, \ldots, i_r,j_r\in V_n\setminus S$ are all distinct.
Thus,
\begin{equs}
\mathbb{E}^{S}_{n,\lambda}\left[\frac{\mathcal{E}^r}{(c_n n)^{2r}}\right]&=(1+o(1)) \frac{n^{2r}}{2^r} d_n(\lambda)^r (c_n n)^{-4r} \frac{\left\langle\varphi^{2k+2r}\right\rangle_0}{\left\langle\varphi^{2k}\right\rangle_0}\\
&= (1+o(1)) \frac{\tilde{g}^{4r}}{2^r} \frac{\left\langle\varphi^{2k+2r}\right\rangle_0}{\left\langle\varphi^{2k}\right\rangle_0}\\
&= (1+o(1)) \mathbb{E}\left[\frac{\tilde{g}^{4r}}{2^r} Z_k^{2r}\right],  
\end{equs}
where $Z_k$ is a random variable with density $\frac{\varphi^{2k} }{\left\langle\varphi^{2k}\right\rangle_0} \mathrm{d} \rho_{g,a}$.

The convergence of all moments implies that the sequence $\left(\mathcal{N}(\n)/(c_n  n)^2\right)_{n\geq 1}$ is tight, and any limiting random variable has $r$-th moment given by $\frac{\tilde{g}^{4r}}{2^r} \: \frac{\left\langle\varphi^{2k+2r}\right\rangle_0}{\left\langle\varphi^{2k}\right\rangle_0}$. Since $\frac{\tilde{g}^4}{2} Z_k^2$ has finite exponential moments, it follows that there exists a unique distribution with $r$-th moment given by $\frac{\tilde{g}^{4r}}{2^r} \: \frac{\left\langle\varphi^{2k+2r}\right\rangle_0}{\left\langle\varphi^{2k}\right\rangle_0}$. Thus $\left(\mathcal{N}(\n)/(c_n  n)^2\right)_{n\geq 1}$ converges to $\frac{\tilde{g}^{4}}{2} Z_k^{2}$, as desired.
\end{proof}

\section{Scaling limit of the cluster size distribution}\label{section: scaling limit of cluster size distribution}

In this section, we will use the results of Section \ref{section: properties of the current} and Appendix \ref{appendix: proof of GS} to prove Theorem \ref{thm: main theorem}. 

We begin with a definition.
\begin{defn} Let $n\geq 1$. Let $S$ be an even subset of $V_n$ and $\mathsf{P}=\lbrace P_1,\ldots, P_r\rbrace\in \mathcal{P}_{\mathrm{even}}(S)$. Let $\mathbf{a}, \mathbf{b}\in (\mathbb R_{\geq 0})^r$ with $a_i\leq b_i$ for each $1\leq i\leq r$. Define the subset $\mathcal{I}_n(\mathbf{a},\mathbf{b},\mathsf{P})$ of $\Omega_n$ as
\begin{equs}
    \mathcal{I}_n(\mathbf{a},\mathbf{b},\mathsf{P}):=\left\{\n\in \Omega_n, \: \mathfrak{P}(\n)=\mathsf{P}, \: a_i\sqrt{n}\leq |\mathcal{C}_{P_i}(\n)|\leq b_i\sqrt{n} \: \forall \: 1\leq i \leq r\right\},
\end{equs}
where for each $1\leq i \leq r$, $\mathcal{C}_{P_i}(\n)$ is the cluster of $\n$ containing the elements of $P_i$.
\end{defn}
As we will see, it is sufficient to study the above events to obtain Theorem \ref{thm: main theorem}. In fact, the latter will be a consequence of the following result.
\begin{prop}\label{prop: main prop} Let $\lambda\in \mathbb R$ and $k\geq 1$. Set $S:=\lbrace 1,\ldots, 2k\rbrace$ and consider $\mathsf{P}=\lbrace P_1,\ldots, P_r\rbrace\in \mathcal{P}_{\mathrm{even}}(S)$. Let $\mathbf{a}, \mathbf{b}\in \mathbb R_{\geq 0}^r$ with $a_i\leq b_i$ for each $1\leq i\leq r$. Then, the following limits exist
\begin{equs}
   \lim_{n\rightarrow \infty}\mathbb P_{n,\lambda}^S[\mathcal{I}_n(\mathbf{a},\mathbf{b},\mathsf{P})], \qquad \lim_{n\rightarrow \infty}\mathbb P_{n,\lambda}^{S,\emptyset}[\mathcal{I}_n(\mathbf{a},\mathbf{b},\mathsf{P})].
\end{equs}
Moreover, both limits are continuous as functions of $\mathbf{a}$ and $\mathbf{b}$.
\end{prop}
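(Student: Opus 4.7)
The plan is to decompose the event $\mathcal{I}_n(\mathbf{a},\mathbf{b},\mathsf{P})$ according to the combinatorial type of the cluster structure of $\n$, evaluate the contribution of each type using the asymptotics of Lemma~\ref{lem: GS}, and take the limit $n\to\infty$.

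First, by Lemmas \ref{lem: multiplicativity edges} and \ref{lem: degree}, with probability tending to $1$ the current $\n$ sampled from either $\mathbb P^S_{n,\lambda}$ or $\mathbb P^{S,\emptyset}_{n,\lambda}$ has the following restricted structure on the union of the source clusters: each source has degree exactly $1$, each non-source vertex of positive degree has degree $2$ or $4$ with only $O(1)$ of the latter, every edge is simple, and no vertex has degree $\geq 5$. Consequently, up to a vanishing error, the source-cluster topology is encoded by a backbone multigraph $\mathsf{B}$ in the sense of Definition \ref{def: backbone multigraph}, in which the $2k$ sources are the leaves, finitely many internal vertices have degree $4$, and the remaining ``edges'' of $\mathsf{B}$ are paths of degree-$2$ vertices. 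This $\mathsf{B}$ determines the induced partition of $S$, and the size $|\mathcal{C}_{P_i}(\n)|$ equals $|P_i|$ plus the number of internal degree-$4$ vertices plus the total length of the degree-$2$ paths, all taken in the component of $\mathsf{B}$ containing $P_i$.

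Next, for each abstract backbone $\mathsf{B}$ inducing the partition $\mathsf{P}$ and each assignment of integer path lengths $(\ell_e)_{e\in E(\mathsf{B})}$, I would evaluate the probability of the corresponding realization. The choice of the (distinct) degree-$4$ vertices and of the ordered interior vertices of each path contributes asymptotically $n^{L}$ (where $L$ is the total number of interior vertices), up to the symmetry factor of $\mathsf{B}$; the edge weights contribute $d_n(\lambda)^{|E(\mathsf{B})|+\sum_e \ell_e}$; and the remaining current on the complement contributes a ratio of partition functions of the form $Z^{\emptyset}_{n-L'}(\lambda)/Z^S_n(\lambda)$ for the single-current measure (and an analogous ratio for the double-current measure), which by Lemma~\ref{lem: GS} converges to the appropriate ratio of moments of $\rho_{g,a(\lambda)}$. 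Multiplying these factors and rewriting $\ell_e=s_e\sqrt{n}$, the contribution of each backbone type takes the form of an explicit continuous density on $(\mathbb R_{\geq 0})^r$ in the rescaled cluster sizes, integrated over the region prescribed by $\mathbf{a}$ and $\mathbf{b}$. Summing over the finite family of backbones compatible with $\mathsf{P}$ (finiteness uses that the number of degree-$4$ vertices is bounded in probability by Lemma~\ref{lem: degree}$(iii)$--$(iv)$) yields the existence of the limits, and their continuity in $(\mathbf{a},\mathbf{b})$ is manifest from the continuity of the density in each $s_e$. For the double-current measure the same reduction applies to $\n_1+\n_2$, the only additional case being that a multiplicity-$2$ edge in $\n_1+\n_2$ can arise from $\n_1(i,j)=\n_2(i,j)=1$; this is recorded as an extra binary labeling on the corresponding edges of $\mathsf{B}$ but treated identically.

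The main obstacle is the combinatorial bookkeeping: one must check that the precise powers of $n$, $\tilde g$, and $d_n(\lambda)$ conspire so that the $n$-dependence cancels in the limit, leaving a continuous joint density in the rescaled cluster sizes. Once the structural reduction of the first step is in hand, this is a refinement of the moment computation already carried out in the proof of Proposition~\ref{prop: gaussian} for the scalar observable $\mathcal{N}(\n)$, extended to keep track of which path in $\mathsf{B}$ each degree-$2$ vertex belongs to.
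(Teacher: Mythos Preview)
Your overall strategy---restriction to the good event via Lemmas~\ref{lem: multiplicativity edges} and~\ref{lem: degree}, decomposition according to backbone multigraphs, and a combinatorial count of realisations---is the same as the paper's. However, there is a genuine gap in the step where you evaluate ``the remaining current on the complement''.

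You write that the complement contributes a ratio $Z^{\emptyset}_{n-L'}(\lambda)/Z^{S}_n(\lambda)$ and that this converges, by Lemma~\ref{lem: GS}, to a ratio of moments of $\rho_{g,a(\lambda)}$. This is incorrect on two counts. First, Lemma~\ref{lem: GS} controls $Z^{S}_n(\lambda)/Z^{\emptyset}_n(\lambda)$ with the \emph{same} $n$; it says nothing about a ratio with $n$ and $n-L'$. Second, and more importantly, the ratio is not asymptotically constant: the sourceless sum on the complement is taken at inverse temperature $d_n(\lambda)$ on $K_{n-v-m}$, and since
\[
d_n(\lambda)=\frac{1}{n-v-m}\Bigl(1-(1+o(1))\frac{\lambda+x}{\sqrt{n-v-m}}\Bigr),\qquad x:=\frac{m}{\sqrt n},
\]
this is the partition function $Z^{\emptyset}_{n-v-m}\bigl[(\lambda+x)(1+o(1))\bigr]$ at the \emph{shifted} near-critical parameter $\lambda+x$. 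The correct input is the explicit asymptotic of Proposition~\ref{prop: estimating the partition function}, which gives the $x$-dependent factor $\int_{\mathbb R} e^{-s^4/12-(\lambda+x)s^2/2}\,\mathrm ds$. Relatedly, your estimate that the ordered choice of interior vertices ``contributes asymptotically $n^{L}$'' drops the sub-leading correction: $(n-v)_{m}=n^{m}e^{-x^2/2}(1+o(1))$, and this $e^{-x^2/2}$ is essential. Without the shift and the $e^{-x^2/2}$, the resulting ``density'' would be $e^{-\lambda x}\prod_i x_i^{\ell_i-1}$, which does not even yield a well-defined limit for $\lambda\leq 0$ and certainly does not give the Gaussian tails of Corollary~\ref{cor: cor 1}.

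Two smaller points. For the double current, the additional combinatorics is not about multiplicity-$2$ edges (those are excluded from the source clusters on $\mathcal G_A$); it is a colouring of the edges of the backbone by $\{\n_1,\n_2\}$ subject to the constraints that edges incident to $S$ come from $\n_1$ and that the $\n_1$-degree at each interior vertex is even---this gives the factor $C_G$ in the paper. Also, the count per backbone must be divided by $|\mathfrak A_G|\,2^{L}$ (automorphisms fixing $S$, and orientation of loops), though this only affects the constants, not the existence or continuity of the limit.
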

We now show how to obtain our main result using Proposition \ref{prop: main prop}.
\begin{proof}[Proof of Theorem~\textup{\ref{thm: main theorem}}] We give the proof for the case of $\mathbb P_{n,\lambda}^S$, with the proof for the double current measure being similar. The main observation we need to make is that with high probability the clusters of the sources always remain of order at most $(c_n n)^2\asymp \sqrt{n}$. This fact follows immediately from the results of Section \ref{section: properties of the current}. Indeed, using Lemma \ref{lem: multiplicativity edges} $(i)$ and Markov's inequality, we get the existence of $C>0$ such that, for any $A>0$,
\begin{equs}
    \limsup_{n\rightarrow \infty}\mathbb P^S_{n,\lambda}\left[\cup_{1\leq i \leq 2k}\lbrace |\mathcal{C}_i(\n)|\geq A\sqrt{n}\right\rbrace]\leq \frac{2k C}{A}.
\end{equs}
In particular, this proves that the sequence of random variables $\left(\frac{|\mathcal{C}_1(\n)|}{\sqrt{n}},\ldots, \frac{|\mathcal{C}_{2k}(\n)|}{\sqrt{n}}, \mathfrak{P}(\n)\right)_{n\geq 1}$ is tight on $ (\mathbb R_{\geq 0})^{2k}\times \mathcal{P}_{\mathrm{even}}(S)$ (equipped with the standard product $\sigma$-algebra). Proposition \ref{prop: main prop} then fully characterises the limit which yields the desired convergence. The fact that the first $2k$ coordinates of the limiting random variables $X_{k,\lambda}^{\mathsf{s}}$ and $X_{k,\lambda}^{\mathsf{d}}$ are strictly positive almost surely  is a consequence of the second part of Proposition \ref{prop: main prop}, since the event $\mathcal{I}_n(\mathbf{a},\mathbf{b},\mathsf{P})$ is empty if one of the $b_i$'s is $0$.
\end{proof}
It remains to prove Proposition \ref{prop: main prop}. Below, we fix $g>0$, $\lambda\in \mathbb R$ and $k\geq 1$. We recall that $S=\lbrace 1,\ldots, 2k\rbrace$. We work on the complete graph $K_n$ with inverse temperature $d_n(\lambda)$ and study the measures $\mathbb P_{n,\lambda}^{S}$ and $\mathbb P_{n,\lambda}^{S,\emptyset}$.

Let $\mathsf{P}=\lbrace P_1,\ldots, P_r\rbrace\in \mathcal{P}_{\mathrm{even}}(S)$. Let $\mathbf{a}, \mathbf{b}\in \mathbb R_{\geq 0}^r$ with $a_i\leq b_i$ for each $1\leq i\leq r$. We begin by analysing the typical form of a current $\n$ which belongs to $\mathcal{I}_n(\mathbf{a},\mathbf{b},\mathsf{P})$. The results of Section \ref{section: properties of the current} motivate the introduction of the following event.
\begin{defn}[Good event] For $A>0$, let $\mathcal{G}_{A}$ be the event that the current $\n$ satisfies the following properties: for every $1\leq i \leq 2k$ the cluster $\mathcal{C}_i(\n)$ has only simple edges (i.e. edges $e$ with $\n_e=1$) and $\Delta_\n(i)=1$, there are no vertices of degree $5$ or larger, and the number of vertices of degree $4$ is at most $A$.
\end{defn}
Recall that the elements of $S^c$ always have even degree. This means that if $\n\in \mathcal{G}_A$, vertices in $S^c$ have degree $0$, $2$ or $4$ in $\n$. Lemmas \ref{lem: multiplicativity edges} and \ref{lem: degree} imply that there exists $C>0$ such that for all $n\geq 1$,
\begin{equs}\label{eq: good event is really good}
    \mathbb P^S_{n,\lambda}[\mathcal{G}_{A}]\geq 1-\frac{C}{A}-\frac{C}{\sqrt{n}}, \qquad 
    \mathbb P^{S,\emptyset}_{n,\lambda}[\mathcal{G}_{A}]\geq 1-\frac{C}{A}-\frac{C}{\sqrt{n}}.
\end{equs}
As a result, we can restrict our attention to currents $\n$ satisfying $\n\in \mathcal{I}_n(\mathbf{a},\mathbf{b},\mathsf{P})\cap \mathcal{G}_A$. Let $\n$ be such a current. Denote by $B(\n)$ the set of vertices of degree $4$ in $\cup_{i=1}^r \mathcal{C}_{P_i}(\n)$. Then, each cluster $\mathcal{C}_{P_i}(\n)$ consists of a union of self-avoiding walks whose internal (meaning not the endpoints) vertices have degree 2 in $\n$, together with a collection of cycles of length at least $3$ which contain exactly one element of $B(\n)\cap \mathcal{C}_{P_i}(\n)$. Note that these self-avoiding walks begin and end at either an element of $B(\n)$ or at a source. See Figure \ref{fig:backbonegraph}.

This observation motivates the following definition. 
\begin{defn}[Backbone multigraph]\label{def: backbone multigraph} Let $\n \in \mathcal{G}_A$ satisfy $\mathfrak{P}(\n)=\mathsf{P}$. Let $G=(V,E)$ be a multigraph (i.e.\ we allow multiple edges with the same endpoints and loops attached to a vertex) such that $V=\{1,2,\ldots,v\}$ for some $2k\leq v\leq n$, all vertices in $\{1,2,\ldots,2k\}$ have degree $1$, and all vertices in $\{2k+1,\ldots,v\}$ have degree $4$, where we use the standard convention that loops contribute twice to the degree of a vertex. We say that $G$ is a \emph{backbone multigraph} of $\n$ if there exists an automorphism $\pi$ of $K_n$ which fixes each element of $S$ such that: $\pi(V)$ consists of the union of the source set $S$ with the set of vertices of degree $4$ lying in one of the clusters $\mathcal{C}_{P_i}(\n)$; the set $\pi(E)$ is obtained by contracting each self-avoiding walk or cycle in the above description  of $\cup_{i=1}^r\mathcal{C}_{P_i}(\n)$.

We call $G_i$ the connected component of $G$ which contains $P_i$, and we write $\mathcal{B}(G)$ for the event that $G$ is a backbone multigraph of $\n$. Moreover, we define $\Gamma(k,A,\mathsf{P})$ as the set of isomorphism classes of backbone multigraphs for the currents $\n\in \mathcal{G}_A$ such that $\mathfrak{P}(\n)=\mathsf{P}$, where we identify two backbone multigraphs $G$ and $G'$ if there is an isomorphism from $G$ to $G'$ which fixes each element of $S$.
\end{defn}

\begin{figure}[H]
    \centering
    \includegraphics{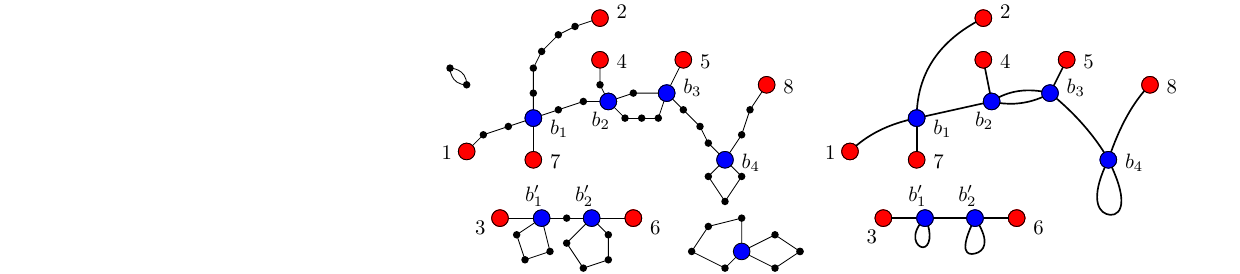}
    \caption{\textsc{Left}: the multigraph associated with a typical sample $\n$ of $\mathbb P^{S}_{n,\lambda}$ with $S=\lbrace 1,\ldots,8\rbrace$. \textsc{Right}: the backbone multigraph associated to $\n$. The sources are represented in red, the set $B(\n)$ of vertices of degree $4$ is represented in blue, and the vertices of degree $2$ in black. The edges of multiplicity $2$ do not appear in the backbone graph since they do not contribute to the cluster of the sources with high probability.}
    \label{fig:backbonegraph}
\end{figure}

We are now equipped to prove Proposition \ref{prop: main prop}. We will use the above observations to show that, under the measures of interest, the probabilities of events of the type $\mathcal{I}_n(\mathbf{a},\mathbf{b},\mathsf{P})\cap \mathcal{G}_A \cap \mathcal{B}(G)$ for a fixed \emph{admissible} (i.e.\ compatible with the event $\mathcal{I}_n(\mathbf{a},\mathbf{b},\mathsf{P})\cap \mathcal{G}_A$) backbone multigraph $G$ converge as $n\rightarrow \infty$. Proposition \ref{prop: main prop} will then follow from summing the above quantities among the (finite) set of admissible backbone multigraphs.

\begin{proof}[Proof of Proposition~\textup{\ref{prop: main prop}}]
We start with the simpler case of a single random current. As we will see, the case of double random currents can be handled similarly, with only a few modifications. We work on the complete graph $K_n$ and consider the random current measure $\mathbb{P}^S_{n,\lambda}$ with inverse temperature $d_n(\lambda)$. Recall that $S:=\{1,\dots, 2k\}$ and that we fixed $\mathsf{P}=\lbrace P_1,\ldots,P_r\rbrace\in \mathcal{P}_{\mathrm{even}}(S)$ and $\mathbf{a}\leq \mathbf{b}\in (\mathbb R_{\geq 0})^r$. 

With the above definitions in hands we have,
\begin{equs}
\mathbb P^S_{n,\lambda}[\mathcal{I}_n(\mathbf{a},\mathbf{b},\mathsf{P}), \: \mathcal{G}_{A}]=\sum_{[G]\in \Gamma(k,A,\mathsf{P})}\mathbb P^S_{n,\lambda}[\mathcal{I}_n(\mathbf{a},\mathbf{b},\mathsf{P}), \: \mathcal{G}_{A}, \: \mathcal{B}(G)],  
\end{equs}
where $G$ is an arbitrary representative of the corresponding isomorphism class.

It remains to estimate the probability $\mathbb P^S_{n,\lambda}[\mathcal{I}_n(\mathbf{a},\mathbf{b},\mathsf{P}), \: \mathcal{G}_{A}, \: \mathcal{B}(G)]$. To this end, let us write $m_i$ for the number of vertices of degree $2$ in $\mathcal{C}_{P_i}(\n)$, and $m$ for the number of vertices of degree $2$ in $\cup_{i=1}^r\mathcal{C}_{P_i}(\n)$. Note that $m=\sum_{i=1}^rm_i$. We also let $\ell_i$ be the number of edges of $G_i$, so that $\ell:=\sum_{i=1}^r \ell_i$ is the number of edges in $G$. Define, $\mathcal{H}_n(\mathbf{a},\mathbf{b},A, G, (m_i)_{1\leq i \leq r})$ to be the set of currents $\n$ which satisfy: $\n \in \mathcal{I}_n(\mathbf{a},\mathbf{b},\mathsf{P})\cap \mathcal{G}_{A}\cap \mathcal{B}(G)$, and $\mathcal{C}_{P_i}(\n)$ contains $m_i$ vertices of degree $2$.

Let $\mathcal{C}_S(\n)=\cup_{i=1}^{2k}\mathcal{C}_i(\n)$. A current $\n\in \mathcal{H}_n(\mathbf{a},\mathbf{b},A, G, (m_i)_{1\leq i \leq r})$ can be written as:
\begin{equs}
    \n=\m_1(\n)+\m_2(\n),
\end{equs}
where $\m_1=\m_1(\n)$ has source set $S$ and is supported on $\mathcal{P}_2(\mathcal{C}_S(\n))$, and $\m_2=\m_2(\n)$ is sourceless and is supported on $\mathcal{P}_2(\mathcal{C}_S(\n)^c)$. Moreover, one has that $w(\n)=w(\m_1)w(\m_2)$ with $w(\m_1)=d_n(\lambda)^{m+\ell}$. Indeed, by definition $\m_1(\n)$ only contains edges of multiplicity one. Hence, its weight is exactly $d_n(\lambda)^{|\lbrace e, \: \m_1(e)>0\rbrace|}$, and $|\lbrace e, \: \m_1(e)>0\rbrace|$ is nothing but the number of open edges in $\mathcal{C}_S(\n)$. By the handshake lemma applied to $G$ we get that $2\ell=2k+4(v-2k)$. Now, applying the handshake lemma to
$\mathcal{C}_{S}(\n)$ we get that its number of edges is equal to $k+m+2(v-2k)=m+\ell$.\footnote{An alternative way to see this is to observe that every self-avoiding walk and every cycle that corresponds to an edge or a loop of $G$ contains one more edge than vertices of degree $2$.} Hence, for each possible value $\m$ that $\m_1(\n)$ may take,
\begin{equs}\label{eq: proof thm 1}
    \sum_{\n \in \mathcal{H}_n(\mathbf{a},\mathbf{b},A, G, (m_i)_{1\leq i \leq r}), \:\m_1(\n)=\m}w(\n)=d_n(\lambda)^{m+\ell}\times \sum_{\m_2\in \Omega_{n-v-m},  \: \sm_2=\emptyset}w_{n,\lambda}(\m_2).
\end{equs}
Let $x=m/\sqrt{n}$. Notice that, 
\begin{equs}
    d_n(\lambda)=\frac{1}{n}\left(1-\frac{\lambda}{\sqrt{n}}\right)=\frac{1}{n-v-m}\left(1-(1+o(1))\frac{\lambda+x}{\sqrt{n-v-m}}\right), 
\end{equs}
where the $o(1)$ term goes to $0$ uniformly over $x$ in a bounded interval.
Thus, one obtains that
\begin{equs}\label{eq: proof thm 2}
    \sum_{\m_2\in \Omega_{n-v-m}, \: \sm_2=\emptyset}w_{n,\lambda}(\m_2)=Z^\emptyset_{n-v-m}\big[(\lambda+x)(1+o(1))\big].
\end{equs}
Since the right-hand side of \eqref{eq: proof thm 1} does not depend on the value of $\m_1(\n)$, it remains to count the number of values $\m_1(\n)$ may take for $\n \in \mathcal{H}_n(\mathbf{a},\mathbf{b},A, G)$. We claim that the number of possibilities for $\m_1(\n)$ is equal to 
\begin{multline}\label{eq: number}
\frac{1+o(1)}{2^L |\mathfrak{A}_G|}\sum_{m_1=a_1 \sqrt{n}}^{b_1\sqrt{n}}\cdots\sum_{m_r=a_r \sqrt{n}}^{b_r \sqrt{n}} {n - 2k \choose v - 2k} (v-2k)! {n-v\choose m} \\ \hspace{6cm} \times  {m \choose m_1,\ldots, m_r}\prod_{i=1}^r m_i!\frac{m_i^{\ell_i-1}}{(\ell_i-1)!}=
\\ \frac{1+o(1)}{2^L |\mathfrak{A}_G|}\sum_{m_1=a_1 \sqrt{n}}^{b_1\sqrt{n}}\cdots\sum_{m_r=a_r \sqrt{n}}^{b_r \sqrt{n}} {n - 2k \choose v - 2k} (v-2k)! {n-v\choose m} m!\prod_{i=1}^r\frac{m_i^{\ell_i-1}}{(\ell_i-1)!},
\end{multline}
where $L$ is the number of loops of $G$, and $\mathfrak{A}_G$ is the set of automorphisms of $G$ which fix each element of $S$. To see this, note that we have ${n - 2k \choose v - 2k}$ ways to choose the vertices of degree $4$ in $\mathcal{C}_{S}(\n)$, and $\frac{(v-2k)!}{|\mathfrak{A}_G|}$ ways to associate each of these vertices to a vertex of $G$ in a unique way, see Figure~\ref{fig:isombackbonegraph}. 
\begin{figure}[H]
    \centering
    \includegraphics{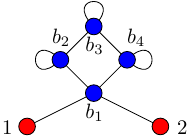}
    \caption{A backbone multigraph $G$ with two automorphisms in $\mathfrak{A}_G$.}
    \label{fig:isombackbonegraph}
\end{figure}
Moreover, there are ${n-v\choose m}$ ways to choose the $m$ vertices of degree $2$. Next, we split these $m$ vertices into ordered
sets of size $m_1,m_2,\ldots,m_r$, corresponding to the vertices of degree $2$ in each $\mathcal{C}_{P_i}(\n)$, for which there are ${m \choose m_1,\ldots, m_r}$ possibilities. Then, we split each of these $m_i$ vertices into
$\ell_i$ sets corresponding to the $\ell_i$ edges of $G_i$, and order them within the sets. These ordered sets may be empty except for those corresponding to the cycles of $ \mathcal{C}_S(\n)$ that are attached to a vertex of degree $4$. Thus, the number of possibilities to split them into $\ell_i$ ordered sets is sandwiched between $m_i!{m_i+\ell_i-1 \choose \ell_i-1}-L m_i! \binom{m_i+\ell_i-2}{\ell_i-2}=\left(1+o(1)\right)m_i!\frac{m_i^{\ell_i-1}}{(\ell_i-1)!}$\footnote{Here the term $m_i! \binom{m_i+\ell_i-2}{\ell_i-2}$ upper bounds the number of possibilities in which a particular set corresponding to one of the loops is empty, and we recall that $L$ is the number of loops.} and $m_i!{m_i+\ell_i-1 \choose \ell_i-1}=\left(1+o(1)\right)m_i!\frac{m_i^{\ell_i-1}}{(\ell_i-1)!}$. Finally, the term $2^{-L}$ accounts for the cyclic ordering of the vertices in the cycles corresponding to the loops of $G$.

Gathering \eqref{eq: proof thm 1}, \eqref{eq: proof thm 2}, and \eqref{eq: number} yields that
$\mathbb P^S_{n,\lambda}[\mathcal{I}_n(\mathbf{a},\mathbf{b},\mathsf{P}), \: \mathcal{G}_{A}, \: \mathcal{B}(G)]$ is equal to
\begin{multline}\label{eq: proof thm 3}
\frac{1+o(1)}{Z^{S}_n(\lambda) 2^L |\mathfrak{A}_G|}\sum_{m_1=a_1 \sqrt{n}}^{b_1\sqrt{n}}\cdots\sum_{m_r=a_r \sqrt{n}}^{b_r \sqrt{n}} {n - 2k \choose v - 2k}(v-2k)! \\ \times{n-v\choose m} m! \:\prod_{i=1}^r\frac{m_i^{\ell_i-1}}{(\ell_i-1)!}\:d_n(\lambda)^{m+\ell} Z^\emptyset_{n-v-m}\big[(\lambda+x)(1+o(1))\big].
\end{multline}
We now proceed with the asymptotic analysis of each of these terms. Note that for $x_i=m_i/\sqrt{n}$ we have
\begin{equs}
    {n - 2k \choose v - 2k}(v-2&k)!=(1+o(1)) n^{v-2k}, \quad   
    d_n(\lambda)^{m+\ell}=(1+o(1))\: n^{-m-\ell}  e^{-\lambda x}, \\
    & \qquad \qquad m_i^{\ell_i-1}=\: x_i^{\ell_i-1} n^{(\ell_i-1)/2}.
\end{equs}
Then, by the asymptotic estimate of the partition function obtained in Proposition \ref{prop: estimating the partition function},
\begin{equs}
 Z^\emptyset_{n-v-m}\big[(\lambda+x)(1+o(1))\big]=(1+o(1)) \frac{n^{1/4}}{\sqrt{2\pi e}} \int_{\mathbb{R}}  e^{-\frac{1}{12}s^4-\frac{\lambda+x}{2}s^2} \mathrm{d}s, 
\end{equs}
where the $o(1)$ term on the right-hand side is uniform in $x$ thanks to the monotonicity of $Z^\emptyset_{k}(u)$ in $u$.
Moreover, we have
\[
\log{\left\{\frac{(n-v)!}{(n-v-m)! n^m}\right\}} = \sum_{i=v}^{v+m-1} \log{\left(\frac{n-i}{n}\right)} = \sum_{i=v}^{v+m-1}\left(-\frac{i}{n} + O\left(\frac{i^2}{n^2}\right) \right)= -\frac{m^2}{2n} + o(1),
\]
hence,
\begin{equs}
{n-v\choose m} \: m!=(1+o(1)) \: e^{-x^2/2} n^m.
\end{equs}
Multiplying all these together, and recalling that $2\ell=4(v-2k)+2k$, we see that the exponent of $n$ is equal to
\begin{equs}
v - 2k +(\ell-r)/2-\ell +1/4=v-2k-\ell/2-(r/2-1/4)=-k/2-(r/2-1/4).
\end{equs}
Thus, using Riemann approximation, we obtain that \eqref{eq: proof thm 3} is equal to
\begin{equs}
(1+o(1))\frac{n^{-k/2+1/4}}{\sqrt{2\pi e}  2^L |\mathfrak{A}_G|}\frac{1}{n^{r/2}}\sum_{m_1=a_1 \sqrt{n}}^{b_1\sqrt{n}}\cdots\sum_{m_r=a_r \sqrt{n}}^{b_r \sqrt{n}} e^{-x^2/2-\lambda x}\prod_{i=1}^r \frac{x_i^{\ell_i-1}}{(\ell_i-1)!}  \int_{\mathbb{R}} e^{-\frac{1}{12}s^4-\frac{\lambda+x}{2}s^2}\mathrm{d}s \\
=(1+o(1))  \frac{n^{-k/2+1/4}}{\sqrt{2\pi e}  2^L |\mathfrak{A}_G|}\int_{a_1 }^{b_1} \ldots \int_{a_r}^{b_r}\int_{\mathbb{R}}  \prod_{i=1}^r \frac{x_i^{\ell_i-1}}{(\ell_i-1)!}\: e^{-\frac{1}{2}x^2-\lambda x-\frac{1}{12}s^4-\frac{\lambda+x}{2}s^2}\prod_{i=1}^r\mathrm{d} x_i \mathrm{d}s.
\end{equs}
By Proposition~\ref{prop: estimating the partition function},  $Z^S_n(\lambda)=(1+o(1))\: K_S(\lambda) \: n^{-k/2+1/4}$ for some constant $K_S(\lambda)>0$, hence 
$\mathbb P^S_{n,\lambda}[\mathcal{I}_n(\mathbf{a},\mathbf{b},\mathsf{P}), \: \mathcal{G}_{A}, \: \mathcal{B}(G)]$ is asymptotically equal to
\begin{equs}\label{eq: eq limit single}
\frac{1+o(1)}{ K_S(\lambda) \sqrt{2\pi e}  2^L \mathfrak{A}_G}\int_{a_1 }^{b_1} \ldots \int_{a_r}^{b_r}\int_{\mathbb{R}} \prod_{i=1}^r \frac{x_i^{\ell_i-1}}{(\ell_i-1)!}\:  e^{-\frac{1}{2}x^2-\lambda x-\frac{1}{12}s^4-\frac{\lambda+x}{2}s^2}\prod_{i=1}^r \mathrm{d} x_i \mathrm{d}s.
\end{equs}

This completes the proof in the case of a single random current, and now we proceed with the proof in the case of the measure $\mathbb P^{S,\emptyset}_{n,\lambda}$. Introduce the event $\mathcal{F}_n$ that $\Delta_{\n}(i)=0$ for every $i\in S$, and recall that Lemma \ref{lem: degree} $(ii)$ implies that $\mathbb{P}^{\emptyset}_{n,\lambda}[\mathcal{F}_n]=1-O(1/\sqrt{n})$. Thus, it suffices to estimate the probability $\mathbb P^{S,\emptyset}_{n,\lambda}[\n_1+\n_2\in \mathcal{I}_n(\mathbf{a},\mathbf{b},\mathsf{P}), \: \n_1+\n_2\in\mathcal{G}_{A}, \: \n_2\in \mathcal{F}_n, \: \mathcal{B}(G)]$. Using the same notations as above, we see that the weight of $\m_1(\n_1+\n_2)$ is again equal to $d_n(\lambda)^{m+\ell}$, and the main differences in the estimates come from the number of possibilities for $\m_1(\n_1+\n_2)$.

Given a realisation of the clusters $\mathcal{C}_{P_i}(\n_1+\n_2)$ there are several ways to assign each open edge of these clusters to either $\n_1$ or $\n_2$. Observe that if $u\in \mathcal{C}_{P_i}(\n_1+\n_2)$ is a vertex of degree $2$ with neighbours $z$ and $w$, then either $\n_1(u,z)=\n_1(u,w)=1$ or $\n_2(u,z)=\n_2(u,w)=1$. As a consequence, $\mathcal{C}_{P_i}(\n_1+\n_2)$ consists of self-avoiding walks and cycles which are open either in $\n_1$ or in $\n_2$. This implies that the number of ways to assign each open edge of $\mathcal{C}_{S}(\n_1+\n_2)$ to either $\n_1$ or $\n_2$ is equal to number of ways to colour the edges $G$ red and blue in such a way that the edges incident to $S$ are all coloured red, and the graph consisting of red edges has even degree for every $u\in V\setminus S$, where we recall that loops contribute twice. We denote this number $C_G$. We can now conclude that the number of possibilities for $\m_1(\n)$ is equal to
\begin{equs}
\frac{(1+o(1))C_G}{2^L |\mathfrak{A}_G|}\sum_{m_1=a_1\sqrt{n}}^{b_1\sqrt{n}}\cdots\sum_{m_r=a_r \sqrt{n}}^{b_r\sqrt{n}} {n - 2k \choose v - 2k} (v-2k)! {n-v\choose m} m!\prod_{i=1}^r\frac{m_i^{\ell_i-1}}{(\ell_i-1)!}.
\end{equs}
Using once again Proposition \ref{prop: estimating the partition function},
\begin{equs}
Z^{\emptyset,\emptyset}_{n-v-m}[(\lambda+x)(1+o(1)]=(1+o(1)) \: \frac{n^{1/2}}{2\pi e} \left(\int_{\mathbb{R}}  e^{-\frac{1}{12}s^4-\frac{\lambda+x}{2}s^2}\: \mathrm{d}s\right)^2. 
\end{equs}
Arguing as above, we find that $\mathbb P^{S,\emptyset}_{n,\lambda}[\n_1+\n_2\in\mathcal{I}_n(\mathbf{a},\mathbf{b},\mathsf{P}), \: \n_1+\n_2\in \mathcal{G}_{A}, \: \n_2\in \mathcal{F}_n, \: \mathcal{B}(G)]$
is equal to 
\begin{equs}\label{eq: eq limit double}
 \frac{(1+o(1)) \:C_G 2^{-L}}{K'_S(\lambda)\sqrt{2\pi e} |\mathfrak{A}_G|}\int_{a_1}^{b_1} \ldots \int_{a_r}^{b_r} \left(\int_{\mathbb{R}}  e^{-\frac{1}{12}s^4-\frac{\lambda+x}{2}s^2}\: \mathrm{d}s\right)^2 \prod_{i=1}^r \frac{x_i^{\ell_i-1}}{(\ell_i-1)!} e^{-\frac{1}{2}x^2-\lambda x}\prod_{i=1}^r \mathrm{d} x_i\,  
\end{equs}
for some constant $K'_S(\lambda)>0$.
\end{proof}
We now turn to the proofs of Corollaries \ref{cor: cor 1} and \ref{cor: cor 2}.
\begin{proof}[Proof of Corollary \textup{\ref{cor: cor 1}}] The convergence in distribution is a direct consequence of Theorem \ref{thm: main theorem}. The existence of Gaussian tails for the limiting measures follows from Proposition \ref{prop: gaussian}. Indeed, looking at the case of a single current (the double current case is similar), one has
\begin{equation}
    \Big|\left(\frac{|\mathcal{C}_1(\n_1)|}{\sqrt{n}},\ldots, \frac{|\mathcal{C}_{2k}(\n_1)|}{\sqrt{n}}\right)\Big|_\infty\leq \frac{\mathcal{N}(\n_1)}{\sqrt{n}},
\end{equation}
where $\mathcal{N}(\n_1)$ was defined in Proposition \ref{prop: gaussian} and where $|\cdot|_\infty$ is the $\ell^\infty$ norm on $\mathbb R^d$. Now, by Proposition \ref{prop: gaussian}, the sequence of random variables $(\tfrac{\mathcal{N}(\n_1)}{\sqrt{n}})_{n\geq 1}$ converges in law to a multiple of $Z_k^2$, where $Z_k$ has quartic tails, and hence has Gaussian tails.
\end{proof}

\begin{proof}[Proof of Corollary \textup{\ref{cor: cor 2}}] Again, Theorem \ref{thm: main theorem} gives convergence in law of $(\mathfrak{P}(\n))_{n\geq 1}$. The second part of the statement follows from the explicit formula for the limit obtained in the proof of Proposition \ref{prop: main prop}.
\end{proof}

\begin{rem}[Study of the clusters of points which are not sources]\label{rem: study of the clusters of non-sources} The above analysis does not exclude the possibility of existence of clusters of size of order $\sqrt{n}$ which contain no sources. It is possible to study these clusters via the following observation. Notice that removing an edge of weight $1$ creates at most two new sources. More precisely, if $i,j\in V_n$, we can asymptotically relate the measure $\mathbb P^S_{n,\lambda}[\cdot \: |\: \n_{i,j}=1]$ to the measure $\mathbb P_{n,\lambda}^{S\Delta\lbrace i,j\rbrace}$. This allows to study the scaling limit of the cluster size distribution of $i$ (or $j$) under the measure $\mathbb P^S_{n,\lambda}[\cdot \: |\: \n_{i,j}=1]$.
\end{rem}

\section{Consequences for the $\varphi^4$ model}\label{section: phi4}

The goal of this section is to give a new perspective on the switching lemma for the $\varphi^4$ model obtained in \cite{gunaratnam2022random}, and recalled below. We begin with some useful definitions.

Let $\Lambda=(V,E)$ be a finite graph. Let $g>0$ and $a\in \mathbb R$. The ferromagnetic $\varphi^4$ model on $\Lambda$ with parameters $(g,a)$ at inverse temperature $\beta> 0$ is defined by the finite volume Gibbs equilibrium state: for $F:\mathbb R^V\rightarrow \mathbb R$,
\begin{equs}
    \langle F(\varphi)\rangle_{\Lambda,g,a,\beta}=\frac{1}{Z_{\Lambda,g,a,\beta}}\int F(\varphi)\exp\left(-\beta H_{\Lambda}(\varphi)\right)\prod_{x\in V}\textup{d}\rho_{g,a}(\varphi_x),
\end{equs}
where $Z_{\Lambda,g,a,\beta}$ is the partition function and
\begin{equs}
    H_{\Lambda}(\varphi):=-\sum_{\lbrace x,y\rbrace \in E}\varphi_x\varphi_y.
\end{equs}
Choosing $a>0$, it is clear that the limit of $\rho_{g,a}$ as $g$ goes to $0$ is the Gaussian law $\rho_a$ given by 
\begin{equs}
    \mathrm{d}\rho_{a}(t)
    =
    \frac{1}{z_{a}}e^{-at^2}\mathrm{d}t.
\end{equs}
We will denote by $\langle \cdot \rangle_{0,a}$ the average with respect to $\rho_{a}$.
Hence, if we additionally require that $a>4d\beta$, the corresponding $\varphi^4$ measure on $\Lambda$ converges (as $g$ goes to $0$) to the rescaled massive discrete Gaussian free field (GFF) of mass $m=a-4d\beta$,
defined via the expectation values
\begin{equs}\label{eq: def}
    \langle F\rangle_{\Lambda,a,\beta}^{\mathsf{GFF}}
    :=
    \frac{1}{Z_{\Lambda,a,\beta}^{\mathsf{GFF}}}\int_{\varphi\in \RR^V}F(\varphi)\exp\left(-\beta H_{\Lambda}(\varphi)\right) \prod_{x\in V}\textup{d}\rho_{a}(\varphi_x),
\end{equs}
where $Z_{\Lambda,a,\beta}^{\mathsf{GFF}}$ is the partition function.

Moreover, choosing $a=-2g$ and taking the limit as $g\rightarrow \infty$,
\begin{equs}
    \frac{\delta_{-1}+\delta_1}{2}=\lim_{g\rightarrow \infty}\frac{1}{z_{g,-2g}}e^{-g(\varphi^2-1)^2+g}\textup{d}\varphi,
\end{equs}
the corresponding $\varphi^4$ measure on $\Lambda$ converges to the Ising model on $\Lambda$ defined via the expectation values
\begin{equs}\label{eq: def 2}
    \langle F\rangle_{\Lambda,\beta}^{\mathsf{Ising}}
    :=
    \frac{1}{Z_{\Lambda,\beta}^{\mathsf{Ising}}}\sum_{\sigma\in \lbrace \pm 1\rbrace^V}F(\sigma)\exp\left(-\beta H_{\Lambda}(\sigma)\right).
\end{equs}
The above observations suggest that the $\varphi^4$ model interpolates between the discrete GFF and the Ising model. Interestingly, Theorem \ref{thm: main theorem} allows to extend this observation at the level of the switching lemma for $\varphi^4$: under the appropriate limits mentioned above, one recovers Wick's law (on the GFF side), or the switching lemma (on the Ising model side). More precisely, the proof will reduce to showing that the random variable $P^{\mathsf{d}}_{k,\lambda}$ converges in distribution to either: the uniform measure over pairings in the GFF limit, or the Dirac measure $\delta_{\lbrace 1,\ldots,2k\rbrace}$ in the Ising limit.

This gives an interpretation\footnote{This property can already be seen at the level of the formulas obtained for the tangling probabilities --- see \eqref{eq: eq limit single} and \eqref{eq: eq limit double}, which are expressed as a sum over backbone multigraphs, similarly to Gaussian correlations being expressed as a sum over pairings.} of the switching lemma for $\varphi^4$ as a type of Wick's law for the non-Gaussian $\varphi^4$ field.

\subsection{Switching lemma for $\varphi^4$ and full description of the tangling probabilities}
We recall the switching lemma for $\varphi^4$ in its simplest case and import some terminology from \cite{gunaratnam2022random}. 

Let $\Lambda=(V,E)$ be a finite graph. Write $\Omega_{\Lambda}:= \NN^{E}$ for the space of currents on $\Lambda$. Given $\n \in \Omega_{\Lambda}$, let $\Delta_\n(x):=\sum_{y\in V} \n_{x,y}$ be the $\n$-degree of $x$. We define
\begin{equs}
\cM(\Lambda)
:=
\left\lbrace A \in \mathbb N^V, \: \sum_{x\in V}A_x \text{ is even}\right\rbrace
\end{equs}
to be the set of admissible moments on $\Lambda$, and write $\partial A:=\{x \in V : A_x \text{ is odd}\}$. We will write $A=\emptyset$ if $A_x=0$ for all $x\in \Lambda$. Given $A,B\in \cM(\Lambda)$, define $A+B \in \cM(\Lambda)$ by
\begin{equs}
(A+B)_x
:=
A_x+B_x
\end{equs}
for all $x \in V$.

Fix $A,B\in \cM(\Lambda)$. Let $\n_1,\n_2\in \Omega_{\Lambda}$ satisfy $\partial \n_1 = \partial A$ and $\partial \n_2 = \partial B$. For each $x\in V$, we define the block $\cB_x(\n_1,A)$ as follows: for each $y\in V$, it contains $\n_{x,y}$ points labelled $(xy(k))_{1\leq k \leq \n_1(x,y)}$, and $A_x$ points labelled $(ya(k))_{1\leq k \leq A_x}$. We also define $\cB_x(\n_2,B)$ similarly.
To define a notion of tangled currents for $\n=\n_1+\n_2$, we define $\cB_x(\n,A,B)$ in the following way: for every $y\in \Lambda$, it contains $\n_{x,y}$ points labelled $(xy(k))_{1\leq k \leq \n_{x,y}}$, $A_x$ points labelled $(xa(k))_{1\leq k \leq A_x}$, and $B_x$ points labelled $(xb(k))_{1\leq k \leq B_x}$. There is a natural injection of $\cB_x(\n_1,A)$ and $\cB_x(\n_2,B)$ in $\cB_x(\n,A,B)$. We write $\cT_{\n, A,B}(x)$ for the set of even partitions of the set $\cB_x(\n,A,B)$ whose restriction to $\cB_x(\n_1,A)$ and $\cB_x(\n_2,B)$ is also an even partition, i.e.\ for every $\mathsf{P}=\{P_1,P_2,\ldots, P_r\}\in \cT_{\n, A,B}(x)$, both $|P_i\cap \cB_x(\n_1,A)|$ and $|P_i \cap \cB_x(\n_2,B)|$ are even numbers. The elements of $\cT_{\n, A,B}(x)$ are called {\it admissible} partitions.
We also define
\begin{equs}
\cT_{\n, A,B}
=
\underset{x \in V}{\bigotimes} \,\cT_{\n,A,B}(x)
\end{equs}
to be the set of admissible {\it tanglings}.
For any admissible $A$ and $B$ and any current $\n=\n_1+\n_2$ with $\sn_1=\partial A$ and $\sn_2=\partial B$, we let $\cH(\n,A,B)$
be the graph with vertex set 
\begin{equs}
\bigcup_{z \in V} \cB_x(\n,A,B),
\end{equs}
and edge set 
\begin{equs}
\bigcup_{\lbrace x,y\rbrace\in E}\left\lbrace \lbrace xy(k),yx(k) \rbrace, \: 1\leq k \leq \n_{x,y}\right\rbrace.	
\end{equs}
Given any tangling $\ct \in \cT_{\n,A,B}$, the graph $\cH(\n,A,B)$ naturally projects to a multigraph $\cH(\n,\ct,A,B)$ where for each block $\cB_x(\n,A,B)$, vertices in the same partition class given by $\ct_x$ are identified as one. See Figure~\ref{fig:tanglings}.

\begin{figure}
    \centering
    \includegraphics[width=\textwidth]{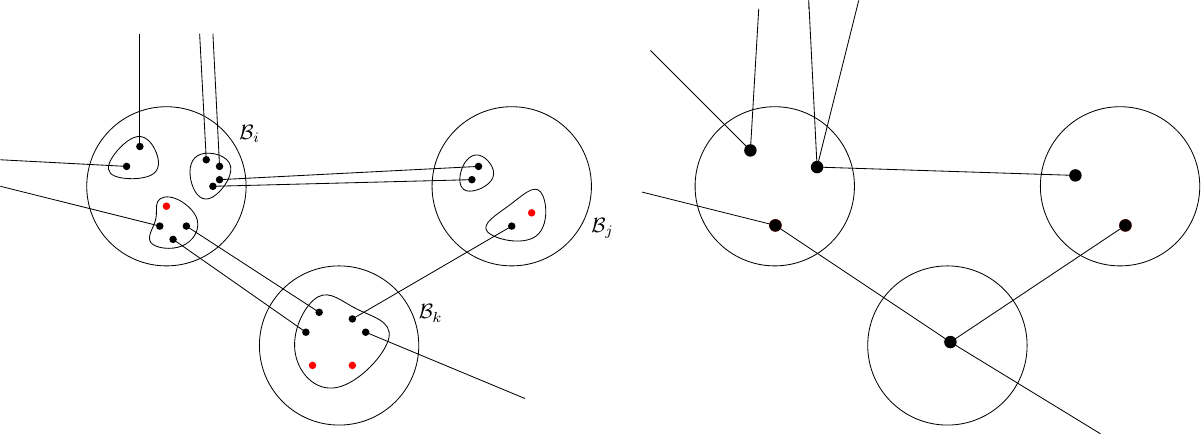}
    \caption{An example of a tangled current $(\n,\ct)$ with a moment function $A$ satisfying $A_i=1, A_j=1$, and $A_k=2$. On the left picture, the circles represent the blocks $\cB(\n,A)$; the small ellipses inside represent the elements of the partitions $\ct$. Notice that $i,j$ are sources of $\n$, and the red vertices correspond to the vertices associated with $A$. On the right picture, the multigraph $\cH(\n,\ct,A)$ is obtained by identifying vertices in the same ellipses.}
    \label{fig:tanglings}
\end{figure} 

We define $\Omega_{\Lambda}^\cT$ to be the set of tangled currents on $\Lambda$. Fix $\beta>0$. Before recalling the switching lemma for $\varphi^4$, we need a definition.

\begin{defn} Let $B\in \cM(\Lambda)$. Define $\cF_B^{\Lambda}$ to be the subset of $\Omega_{\Lambda}^\cT$ consisting of all tangled currents $(\n,\ct)$ for which every connected component intersects $\lbrace xb(k): \: x\in \Lambda, \: 1\leq k \leq B_x\rbrace$ an even number of times. In words, $\cF_B^{\Lambda}$ consists of all tangled currents that pair the elements of $B$ in its associated multigraph. The set $\cF_B^{\Lambda}$ is measurable with respect to the natural $\sigma$-algebra on $\Omega_{\Lambda}^\cT$.
\end{defn}

We let 
\begin{equation}\label{eq:weights_def}
w_{A,\beta}(\n)
:=
\prod_{\lbrace x,y\rbrace\in E}\dfrac{\beta^{\n_{x,y}}}{\n_{x,y}!}\left\langle \varphi^{\Delta_\n(x)+A_x}\right\rangle_{0,g,a}.
\end{equation}
 
\begin{thm}[Switching lemma for $\varphi^4$, {\cite[Theorem~3.11]{gunaratnam2022random}}]\label{thm: switching lemma}
Let $A,B \in \cM(\Lambda)$. For every $\n_1,\n_2 \in \Omega_\Lambda$ satisfying $\partial \n_1 = \partial A$ and $\partial \n_2 = \partial B$ and for every $z\in \Lambda$, there are probability measures $\rho^{\n_1,\n_2}_{z,A,B}=\rho^{\n_1,\n_2}_{z,A,B,g,a}$ on $\cT_{\n,A,B}(z)$ such that the following holds. 
For any measurable and bounded $F:\Omega_{\Lambda}^{\cT} \rightarrow \RR$, 
\begin{equs}
\sum_{\substack{\partial \n_1 = \partial A \\ \partial \n_2 = \partial B}} 
&w_{A,\beta}(\n_1) w_{B,\beta}(\n_2) \rho_{A,B}^{\n_1,\n_2}\big[ F(\n_1+\n_2,\ct) \big]
\\
&=
\sum_{\substack{\partial \n_1 = \partial(A+B) \\ \partial \n_2 = \emptyset}} 
w_{A+B,\beta}(\n_1)w_{\emptyset,\beta}(\n_2)  \rho_{A+B,\emptyset}^{\n_1,\n_2} \big[ F(\n_1 + \n_2, \ct) \mathbbm{1}_{\cF_{B}^{\Lambda}} \big],
\end{equs}
where
\begin{equs}
\rho^{\n_1,\n_2}_{A,B}
:=
\bigotimes_{z \in \Lambda} \rho^{\n_1, \n_2}_{z,A,B},\text{ and }\rho^{\n_1,\n_2}_{A+B,\emptyset}:=\bigotimes_{z \in \Lambda} \rho^{\n_1, \n_2}_{z,A+B,\emptyset},
\end{equs}
are measures on $\cT_{\n,A,B}$ and $\cT_{\n,A+B,\emptyset}$ respectively, and $\cF_B^{\Lambda}$ is the event defined above.
\end{thm}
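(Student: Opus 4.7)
The plan is to derive Theorem \ref{thm: switching lemma} from the scaling limit results of Theorem \ref{thm: main theorem} via a Griffiths--Simon-type lift of the $\varphi^4$ model to Ising random current models on a sequence of complete-graph blocks, followed by an application of the classical Ising switching lemma on the lifted system. Concretely, I would replace each vertex $x \in V$ of $\Lambda$ by a copy of the complete graph $K_n$ with intra-block coupling $d_n(\lambda)$, where $\lambda = 2a\tilde{g}^{-2}$, and connect the blocks corresponding to $\lbrace x, y \rbrace \in E$ by edges with a suitably rescaled coupling so that the aggregated block-to-block interaction converges to $\beta$ in the limit. The moment functions $A$ and $B$ are encoded on this enlarged graph $\Lambda_n$ by placing $A_x$ (resp.\ $B_x$) source points inside the block at $x$. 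By Lemma \ref{lem: GS}, the block-spin distribution converges to the $\varphi^4$ single-site measure $\rho_{g,a}$, so the lifted random current partition function, after rescaling, converges to the $\varphi^4$ partition function with weights as in \eqref{eq:weights_def}.

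Next, I would apply the standard Ising switching lemma on $\Lambda_n$ to the pair $(\n_1^{(n)}, \n_2^{(n)})$ with sources encoding $A$ and $B$. The data of such a pair admits a natural two-layer decomposition: the \emph{inter-block} currents record the edges used between distinct blocks and, after normalisation, converge in distribution to currents $\n_1, \n_2$ on $\Lambda$ weighted according to $w_{A,\beta}$ and $w_{B,\beta}$ (via Lemma \ref{lem: GS}, since each factor $\langle \varphi^{\Delta_\n(x)+A_x}\rangle_{0,g,a}$ arises as a rescaled block correlation); the \emph{intra-block} current at each vertex $x$, restricted to the marked points of $\cB_x(\n, A, B)$ (endpoints of inter-block edges together with source points), produces a partition of these marks into connected components. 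This partition is exactly an element of $\cT_{\n, A, B}(x)$, and Corollary \ref{cor: cor 2} provides its limiting distribution, which I take as the definition of $\rho^{\n_1, \n_2}_{x, A, B}$. Positivity on all of $\cT_{\n, A, B}(x)$ follows from the last assertion of Corollary \ref{cor: cor 2}, ensuring that these are genuine probability measures. The event $\cF_B^{\Lambda}$ on the right-hand side is the limit of the Ising event that the $B$-sources are pairwise connected via a sub-current of the combined current on $\Lambda_n$, and the backbone multigraph decomposition of Section \ref{section: scaling limit of cluster size distribution} ensures the combinatorial equivalence of the two formulations in the limit.

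The main technical obstacle I anticipate is establishing uniform-in-$n$ control on the weight contributions coming from currents $(\n_1^{(n)}, \n_2^{(n)})$ with many marked points, so as to exchange the limit $n \to \infty$ with the (a priori infinite) sums over $\n_1, \n_2$ against the bounded measurable $F$. Here the degree bounds of Lemma \ref{lem: degree}, the edge-multiplicity bounds of Lemma \ref{lem: multiplicativity edges}, and the Gaussian-type tails coming from Proposition \ref{prop: gaussian} furnish the dominating estimates needed; combined with tightness of the backbone-multigraph decomposition, they should allow a dominated-convergence argument that identifies the limits of the two sides of the lifted switching identity with the two sides of Theorem \ref{thm: switching lemma}, closing the proof.
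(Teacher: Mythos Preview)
The paper does not contain its own proof of Theorem~\ref{thm: switching lemma}: the result is quoted verbatim from \cite[Theorem~3.11]{gunaratnam2022random} and used as a black box, so there is no in-paper argument to compare against. What the present paper contributes (see the discussion after Theorem~\ref{thm: switching lemma} and the unnumbered Proposition following \eqref{eq: application of the switching to connect the rho measures}) is the upgrade of the tangling measures $\rho^{\n_1,\n_2}_{z,A,B}$ from subsequential limits to genuine limits, via Corollary~\ref{cor: cor 2}.

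Your outline is in spirit the strategy of \cite{gunaratnam2022random}: lift each site of $\Lambda$ to a near-critical Curie--Weiss block, apply the classical Ising switching lemma on the enlarged graph $\Lambda_n$, then pass to the limit. That is the right architecture, and your use of Corollary~\ref{cor: cor 2} to \emph{define} $\rho^{\n_1,\n_2}_{x,A,B}$ as an actual limit (rather than along a subsequence) is precisely the improvement this paper affords. Two points deserve more care, however. First, Corollary~\ref{cor: cor 2} is stated only for source configurations $(S,\emptyset)$, whereas the tangling measures require general disjoint pairs $(S_1,S_2)$; you need to insert the reduction \eqref{eq: application of the switching to connect the rho measures} (itself an application of the Ising switching lemma) before invoking the corollary, and you should check that the resulting limit is supported on $\mathcal{P}_{\mathrm{even}}(S_1,S_2)$ rather than all of $\mathcal{P}_{\mathrm{even}}(S_1\sqcup S_2)$. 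Second, the estimates you cite for the dominated-convergence step (Lemmas~\ref{lem: multiplicativity edges}, \ref{lem: degree}, Proposition~\ref{prop: gaussian}) are proved for a single complete-graph block, not for the coupled system $\Lambda_n$; the summability over inter-block currents $\n_1,\n_2$ on $\Lambda$ is a separate issue that requires controlling the growth of $\langle\varphi^{2p}\rangle_0$ in $p$ (these moments grow like $p!^{1/2}$), and you should make explicit why the sums on both sides of the lifted identity are absolutely convergent uniformly in $n$.
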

In \cite{gunaratnam2022random}, the probability measures $\rho_{z,A,B}^{\n_1,\n_2}$ were obtained by taking subsequential limits of certain measures defined below. Our main result implies full convergence of these measures. We will need the following definitions.

If $S_1,S_2$ are two even and disjoint subsets of $V_n$, we let $\tilde{\rho}_{n,\lambda}^{S_1,S_2}$ be the measure $\mathbb P^{S_1,S_2}_{n,\lambda}$ quotiented by the relation:
\begin{equs}
    \n\sim \m \Longleftrightarrow \mathfrak{P}(\n)=\mathfrak{P}(\m).
\end{equs}
In words, $\tilde{\rho}_{n,\lambda}^{S_1,S_2}$ is the law of $\mathfrak{P}(\n)$ for $\n$ sampled according to $\mathbb P^{S_1,S_2}_{n,\lambda}$. Note that the measure $\tilde{\rho}_{n,\lambda}^{S_1,S_2}$ is supported on the set $\mathcal{P}_{\mathrm{even}}(S_1,S_2)$ which consists of even partitions $\mathsf{P}=\lbrace P_1,\ldots, P_r\rbrace$ of $S_1\sqcup S_2$ which are compatible\footnote{Note also that $\mathcal{P}_{\mathrm{even}}(S_1,\emptyset)=\mathcal{P}_{\mathrm{even}}(S_1)$.} with both $S_1$ and $S_2$ in the sense that: for all $1\leq i \leq r$, $|P_i\cap S_1|$ and $|P_i\cap S_2|$ are even.

For $x\in V$, the measure $\rho^{\n_1,\n_2}_{x,A,B}$ is defined in \cite{gunaratnam2022random} as a subsequential limit of the sequence of measures $(\tilde{\rho}^{S_1,S_2}_{n,\lambda})_{n\geq 1}$ where $|S_1|=A_x+\Delta_{\n_1}(x)$ and $|S_2|=B_x+\Delta_{\n_2}(x)$ (and $S_1,S_2$ are disjoint).

As mentioned in \eqref{eq: switching intro}, the switching lemma (for the Ising model), gives a connection between $\tilde{\rho}_{n,\lambda}^{S_1,S_2}$ and $\tilde{\rho}_{n,\lambda}^{S_1\sqcup S_2,\emptyset}$. Indeed, notice that the event $\mathcal{F}_{S_2}$ defined below \eqref{eq: switching intro} satisfies $\lbrace \n, \: \mathfrak{P}(\n)=\mathsf{P}\rbrace\subset \mathcal{F}_{S_2}$ for any $\mathsf{P}\in \mathcal{P}_{\mathrm{even}}(S_1,S_2)$. As a result, for any such $\mathsf{P}$,
\begin{equs}\label{eq: application of the switching to connect the rho measures}
    \tilde{\rho}_{n,\lambda}^{S_1,S_2}[\mathsf{P}]=\frac{\langle \sigma_{S_1\sqcup S_2}\rangle_{n,\lambda}}{\langle \sigma_{S_1}\rangle_{n,\lambda}\langle \sigma_{S_2}\rangle_{n,\lambda}}\tilde{\rho}_{n,\lambda}^{S_1\sqcup S_2,\emptyset}[\mathsf{P}].
\end{equs}
\begin{prop} Let $\lambda\in \mathbb R$. For any $S_1,S_2$ even and disjoint subsets of $V_n$. The sequence of measures $(\tilde{\rho}^{S_1,S_2}_{n,\lambda})_{n\geq 1}$ converges in the sense of distribution.
\end{prop}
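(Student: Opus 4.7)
The plan is to reduce the convergence of $(\tilde{\rho}^{S_1,S_2}_{n,\lambda})_{n\geq 1}$ to Corollary \ref{cor: cor 2} via the switching identity \eqref{eq: application of the switching to connect the rho measures}. Since $\mathcal{P}_{\mathrm{even}}(S_1\sqcup S_2)$ is a finite set, convergence in distribution amounts to pointwise convergence of $\tilde{\rho}^{S_1,S_2}_{n,\lambda}[\mathsf{P}]$ for each partition $\mathsf{P}$ of $S_1\sqcup S_2$.

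First I would handle the trivial case: if $\mathsf{P}\in \mathcal{P}_{\mathrm{even}}(S_1\sqcup S_2)\setminus \mathcal{P}_{\mathrm{even}}(S_1,S_2)$, then some class $P_i$ meets $S_1$ in an odd number of vertices. Such a $\mathsf{P}$ cannot arise as $\mathfrak{P}(\n_1+\n_2)$ under $\mathbb P^{S_1,S_2}_{n,\lambda}$, because each cluster of $\n_1+\n_2$ contains clusters of $\n_1$ and of $\n_2$ whose intersections with $S_1$ and $S_2$ respectively are even, so that any class of $\mathfrak{P}(\n_1+\n_2)$ has even intersection with each of $S_1$ and $S_2$. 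Hence $\tilde{\rho}^{S_1,S_2}_{n,\lambda}[\mathsf{P}]=0$ for every $n$, and convergence holds trivially.

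For $\mathsf{P}\in\mathcal{P}_{\mathrm{even}}(S_1,S_2)$, I apply \eqref{eq: application of the switching to connect the rho measures} and treat the two factors separately. The ratio of Ising correlations, once renormalised by appropriate powers of $c_n n$, converges by Lemma \ref{lem: GS}:
\begin{equation*}
\frac{\langle \sigma_{S_1\sqcup S_2}\rangle_{n,\lambda}}{\langle \sigma_{S_1}\rangle_{n,\lambda}\langle \sigma_{S_2}\rangle_{n,\lambda}}\underset{n\to\infty}{\longrightarrow}\frac{\langle \varphi^{|S_1|+|S_2|}\rangle_0}{\langle \varphi^{|S_1|}\rangle_0\langle \varphi^{|S_2|}\rangle_0}.
\end{equation*}
For the remaining factor $\tilde{\rho}^{S_1\sqcup S_2,\emptyset}_{n,\lambda}[\mathsf{P}]$, I invoke Corollary \ref{cor: cor 2}: relabelling $S_1\sqcup S_2$ as $\{1,\ldots,2k\}$ with $2k=|S_1\sqcup S_2|$ does not change the distribution of $\mathfrak{P}(\n_1+\n_2)$ by vertex-transitivity of $K_n$, and the corollary then yields convergence of $\tilde{\rho}^{S_1\sqcup S_2,\emptyset}_{n,\lambda}$ in distribution and, in particular, of its mass at $\mathsf{P}$. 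Multiplying the two limits gives the conclusion.

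The argument is essentially bookkeeping: all the real analytical work has been carried out in Proposition \ref{prop: main prop} and Lemma \ref{lem: GS}, and the only step of substance is recognising that the switching identity \eqref{eq: application of the switching to connect the rho measures} bridges the double-sourced measure $\tilde{\rho}^{S_1,S_2}_{n,\lambda}$ with the single-sourced double-current measure $\tilde{\rho}^{S_1\sqcup S_2,\emptyset}_{n,\lambda}$ already treated by Corollary \ref{cor: cor 2}. Consequently, no new obstacle is expected.
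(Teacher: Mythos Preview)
Your proposal is correct and follows essentially the same approach as the paper's proof: reduce to pointwise convergence on the finite support, invoke the switching identity \eqref{eq: application of the switching to connect the rho measures}, and combine Lemma~\ref{lem: GS} for the correlation ratio with Corollary~\ref{cor: cor 2} for the law of $\mathfrak{P}(\n_1+\n_2)$ under $\mathbb P^{S_1\sqcup S_2,\emptyset}_{n,\lambda}$. The only difference is that you explicitly dispose of partitions outside $\mathcal{P}_{\mathrm{even}}(S_1,S_2)$, whereas the paper simply notes upfront that $\tilde\rho^{S_1,S_2}_{n,\lambda}$ is supported there.
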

\begin{proof} Since the sequence of measures $(\tilde{\rho}^{S_1,S_2}_{n,\lambda})_{n\geq 1}$ lives on the finite space $\mathcal{P}_{\mathrm{even}}(S_1,S_2)$ we just need to prove the convergence for $\mathsf{P}\in \mathcal{P}_{\mathrm{even}}(S_1,S_2)$ of $(\tilde{\rho}^{S_1,S_2}_{n,\lambda}[\mathsf{P}])_{n\geq 1}$. Using Corollary \ref{cor: cor 2}, Lemma \ref{lem: GS}, and \eqref{eq: application of the switching to connect the rho measures}, we find that
\begin{equs}
    \lim_{n\rightarrow\infty} \tilde{\rho}^{S_1,S_2}_{n,\lambda}[\mathsf{P}]=\frac{\langle \varphi^{|S_1|+|S_2|}\rangle_0}{\langle \varphi^{|S_1|}\rangle_0\langle \varphi^{|S_2|}\rangle_0}\mathbb P[P^{\mathsf{d}}_{k,\lambda}=\mathsf{P}],
\end{equs}
where $\langle \cdot\rangle_0=\langle \cdot\rangle_{0,g,a(\lambda)}$ was defined in Lemma \ref{lem: GS}, and $k=(|S_1|+|S_2|)/2$.
\end{proof}
As mentioned in \cite{gunaratnam2022random}, we can also give a probabilistic interpretation of Theorem \ref{thm: switching lemma}. We begin by defining the natural probability measures on the set of tangled currents.
\begin{defn}
Let $A,B\in \cM(\Lambda)$. We define a measure $\P_{\Lambda,g,a\beta}^{A,B}$ on the set of tangled currents on $\Lambda$ with source set $A+B$ as follows: for $(\n,\ct)\in \Omega_{\Lambda,\cT}$,
\begin{equs}\label{eq: def P for phi4}
    \P^{A,B}_{\Lambda,g,a,\beta}[(\n,t)]=\mathbbm{1}_{\partial \n=\partial(A+B)}\frac{\sum_{\substack{\partial \n_1=\partial A\\ \partial\n_2=\partial B}}w_{A,\beta}(\n_1)w_{B,\beta}(\n_2)\mathbbm{1}_{\n_1+\n_2=\n}\rho_{A,B,g,a}^{\n_1,\n_2}[(\n,\ct)]}{\sum_{\substack{\partial \n_1=\partial A\\ \partial \n_2=\partial B}}w_{A,\beta}(\n_1)w_{B,\beta}(\n_2)}.
\end{equs}
We write $\mathbf{E}^A_{\Lambda,\beta}$ for the expectation with respect to the measure $\mathbf{P}^A_{\Lambda,\beta}$.
\end{defn}

\begin{cor}[Probabilistic version of the switching lemma]\label{thm: probabilistic switching}
Let $A,B \in \cM(\Lambda)$. For any measurable and bounded $F:\Omega_{\Lambda,\cT}\rightarrow \mathbb R$,
\begin{equs}
    \frac{\langle \varphi_A\rangle_{\Lambda,g,a,\beta}\langle \varphi_B\rangle_{\Lambda,g,a,\beta}}{\langle \varphi_{A+B}\rangle_{\Lambda,g,a,\beta}}\mathbf{E}^{A,B}_{\Lambda,\beta}[F(\n,\ct)]=\mathbf{E}^{A+B,\emptyset}_{\Lambda,g,a,\beta}[F(\n,\ct)\mathbbm{1}_{\cF^{\Lambda}_B}],
\end{equs}
where for $S\in \mathcal{M}(\Lambda)$, $\langle \varphi_S\rangle_{\Lambda,g,a,\beta}:=\langle \prod_{x\in \Lambda}\varphi_x^{S_x}\rangle_{\Lambda,g,a,\beta}$.
\end{cor}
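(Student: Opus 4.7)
The plan is to show that the Probabilistic version of the switching lemma is a direct reformulation of Theorem \ref{thm: switching lemma} after expressing the normalising constants of $\mathbf{P}^{A,B}_{\Lambda,g,a,\beta}$ in terms of $\varphi^4$ correlation functions. There is no genuine obstacle: the entire proof is bookkeeping once one recognises that the denominator appearing in the definition \eqref{eq: def P for phi4} is a product of two random current partition functions for the $\varphi^4$ model.

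First, I would record the random current expansion for the $\varphi^4$ correlation functions, which follows from expanding the exponential $\exp(\beta\sum_{\{x,y\}\in E}\varphi_x\varphi_y)$ in a Taylor series, swapping sum and integral, and integrating term by term against $\prod_x\mathrm{d}\rho_{g,a}(\varphi_x)$. Concretely, for any $S\in\cM(\Lambda)$,
\begin{equs}
\langle \varphi_S\rangle_{\Lambda,g,a,\beta}\cdot Z_{\Lambda,g,a,\beta}^{\varphi^4,\text{rc}} =\sum_{\partial \n=\partial S}w_{S,\beta}(\n),
\end{equs}
where $Z_{\Lambda,g,a,\beta}^{\varphi^4,\text{rc}}:=\sum_{\partial\n=\emptyset}w_{\emptyset,\beta}(\n)$ is the random current partition function; in particular, this common factor will cancel in all ratios below. (This is a standard consequence of the definition of the weights \eqref{eq:weights_def}, but I would include one line of justification for the reader's convenience.)

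Next, I would use this identity to rewrite the denominator in \eqref{eq: def P for phi4}:
\begin{equs}
\sum_{\substack{\partial \n_1=\partial A\\ \partial \n_2=\partial B}}w_{A,\beta}(\n_1)w_{B,\beta}(\n_2)=\langle\varphi_A\rangle_{\Lambda,g,a,\beta}\langle\varphi_B\rangle_{\Lambda,g,a,\beta}\,\big(Z^{\varphi^4,\text{rc}}_{\Lambda,g,a,\beta}\big)^2,
\end{equs}
and similarly
\begin{equs}
\sum_{\substack{\partial \n_1=\partial (A+B)\\ \partial \n_2=\emptyset}}w_{A+B,\beta}(\n_1)w_{\emptyset,\beta}(\n_2)=\langle\varphi_{A+B}\rangle_{\Lambda,g,a,\beta}\,\big(Z^{\varphi^4,\text{rc}}_{\Lambda,g,a,\beta}\big)^2.
\end{equs}
Substituting back into the definition \eqref{eq: def P for phi4} of $\mathbf P^{A,B}_{\Lambda,g,a,\beta}$ and of $\mathbf P^{A+B,\emptyset}_{\Lambda,g,a,\beta}$ yields
\begin{equs}
\mathbf{E}^{A,B}_{\Lambda,\beta}[F(\n,\ct)]=\frac{\sum_{\partial\n_1=\partial A,\,\partial\n_2=\partial B}w_{A,\beta}(\n_1)w_{B,\beta}(\n_2)\,\rho^{\n_1,\n_2}_{A,B}[F(\n_1+\n_2,\ct)]}{\langle\varphi_A\rangle\langle\varphi_B\rangle\,\big(Z^{\varphi^4,\text{rc}}\big)^2},
\end{equs}
and analogously for the right-hand side with $F\mathbbm{1}_{\cF_B^\Lambda}$.

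Finally, I would multiply the identity of Theorem \ref{thm: switching lemma} by the common factor $\big(\langle\varphi_{A+B}\rangle_{\Lambda,g,a,\beta}\,(Z^{\varphi^4,\text{rc}}_{\Lambda,g,a,\beta})^2\big)^{-1}$. Using the two displays above to identify the left-hand side as $\frac{\langle\varphi_A\rangle\langle\varphi_B\rangle}{\langle\varphi_{A+B}\rangle}\mathbf E^{A,B}_{\Lambda,\beta}[F(\n,\ct)]$ and the right-hand side as $\mathbf E^{A+B,\emptyset}_{\Lambda,g,a,\beta}[F(\n,\ct)\mathbbm{1}_{\cF_B^\Lambda}]$ gives the claim. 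The only mild care required is to ensure the denominator $\langle\varphi_{A+B}\rangle_{\Lambda,g,a,\beta}$ is non-zero; this follows from the Griffiths--Simon construction (or from the positivity of the measure $\rho_{g,a}$ restricted to even moments) since $A+B\in\cM(\Lambda)$ has even total mass, so the corresponding correlation function is strictly positive.
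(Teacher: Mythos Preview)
Your proposal is correct and is precisely the intended derivation: the paper states this corollary without proof, treating it as an immediate reformulation of Theorem~\ref{thm: switching lemma} once the normalising constants in \eqref{eq: def P for phi4} are recognised as $\langle\varphi_A\rangle_{\Lambda,g,a,\beta}\langle\varphi_B\rangle_{\Lambda,g,a,\beta}\big(Z^{\varphi^4,\mathrm{rc}}_{\Lambda,g,a,\beta}\big)^2$ and $\langle\varphi_{A+B}\rangle_{\Lambda,g,a,\beta}\big(Z^{\varphi^4,\mathrm{rc}}_{\Lambda,g,a,\beta}\big)^2$ respectively. Your bookkeeping matches exactly what is left implicit in the text.
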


\subsection{An alternative proof of Wick's law}

Let $U^{\n_1,\n_2}_{x,A,B}$ be the uniform measure on pairings in $\cT_{\n, A,B}(x)$.

\begin{prop}\label{prop: cv of the tanglings to unif on pairings}
Let $a>0$ and $x\in V$. As $g$ tends to $0$, $\rho^{\n_1, \n_2}_{x,A,B,g,a}$ converges in distribution to $U^{\n_1,\n_2}_{x,A,B}$.
\end{prop}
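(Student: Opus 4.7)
The plan is to start from the identity
\begin{equation*}
\rho^{\n_1,\n_2}_{x,A,B,g,a}[\mathsf{P}] = \frac{\langle \varphi^{2k}\rangle_{0,g,a}}{\langle \varphi^{|S_1|}\rangle_{0,g,a}\langle \varphi^{|S_2|}\rangle_{0,g,a}}\,\mathbb{P}\big[P^{\mathsf{d}}_{k,\lambda}=\mathsf{P}\big]
\end{equation*}
derived in the proof of the preceding proposition, with $|S_1| = A_x + \Delta_{\n_1}(x)$, $|S_2| = B_x + \Delta_{\n_2}(x)$, $2k = |S_1|+|S_2|$, $\lambda = 2a\tilde{g}^{-2}$, and $\tilde{g} = (12g)^{1/4}$, and then take $g \to 0$. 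Under this limit with $a > 0$ fixed, $\lambda \to +\infty$ and $\rho_{g,a}$ converges weakly to the Gaussian $\rho_a$; Wick's formula $\langle \varphi^{2k}\rangle_{0,a} = (2k-1)!!\,(2a)^{-k}$ then forces the prefactor to tend to $(2k-1)!!/((|S_1|-1)!!(|S_2|-1)!!)$.

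The core task is the large-$\lambda$ analysis of $\mathbb{P}[P^{\mathsf{d}}_{k,\lambda} = \mathsf{P}]$ via formula~\eqref{eq: eq limit double}, summed over backbone multigraphs $G$ compatible with $\mathsf{P}$. For such a $G$ with $v_4 := v - 2k$ internal degree-four vertices, one has $\ell = k + 2v_4$. The change of variables $x_i = y_i/\lambda$ yields $\int_\RR e^{-s^4/12 - (\lambda + x)s^2/2}\,\mathrm{d}s \sim \sqrt{2\pi/\lambda}$, the factor $e^{-x^2/2 - \lambda x}$ converges to $e^{-y}$ with $y = \sum_i y_i$, and the product $\prod_i x_i^{\ell_i - 1}$ together with the Jacobian produces an overall $\lambda^{-\ell}$; hence $I_G \sim c_G\,\lambda^{-(\ell+1)}$ for an explicit constant $c_G$. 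Only $v_4 = 0$ backbones survive the limit: these are matchings on $S$, in bijection with pairings of $S$, and each satisfies $L = 0$, $|\mathfrak{A}_G| = 1$, $C_G = 1$, so the leading constant is the same for every pairing. Non-pairings, which force $v_4 \ge 1$, are suppressed by $\lambda^{-2v_4}$. Since $\mathbb{P}[P^{\mathsf{d}}_{k,\lambda}]$ is a probability measure for every $\lambda$, normalisation then forces $\mathbb{P}[P^{\mathsf{d}}_{k,\lambda} = \mathsf{P}] \to 1/(2k-1)!!$ on each pairing and to $0$ on non-pairings.

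Combining the two factors yields, for every admissible pairing of $\cB_x(\n, A, B)$ (a pairing whose blocks each lie inside $\cB_x(\n_1, A)$ or inside $\cB_x(\n_2, B)$, giving exactly $(|S_1|-1)!!(|S_2|-1)!!$ configurations), the limit $\rho^{\n_1,\n_2}_{x,A,B,g,a}[\mathsf{P}] \to 1/((|S_1|-1)!!(|S_2|-1)!!)$, and $0$ on non-pairings. This is exactly the uniform measure $U^{\n_1, \n_2}_{x, A, B}$. The main technical obstacle will be to rigorously justify the rescaling $x_i = y_i/\lambda$ (controlling tails in both $s$ and $y$ so as to obtain $I_G \sim c_G\,\lambda^{-(\ell+1)}$ with uniform error) and to verify the corresponding asymptotic for the normalising constant $K'_S(\lambda)$ from Proposition~\ref{prop: estimating the partition function}.
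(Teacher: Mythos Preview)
Your approach is correct and takes a genuinely different route from the paper's. You carry out an explicit large-$\lambda$ asymptotic analysis of the integral formula \eqref{eq: eq limit double}: after the rescaling $x_i = y_i/\lambda$, the $v_4=0$ backbone associated with each pairing contributes $\sim 2\pi\,\lambda^{-(k+1)}$, and once you compute $K'_S(\lambda)\sqrt{2\pi e} = \tilde{g}^{2k}\langle\varphi^{2k}\rangle_0\,J(\lambda)^2 \sim (2k-1)!!\cdot 2\pi\,\lambda^{-(k+1)}$ from Proposition~\ref{prop: estimating the partition function} (this is indeed the key step you flag), positivity gives $\liminf_{\lambda\to\infty}\mathbb{P}[P^{\mathsf d}_{k,\lambda}=\mathsf P]\geq 1/(2k-1)!!$ for every pairing $\mathsf P$. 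Summing over the $(2k-1)!!$ pairings and using that the total mass is $1$ forces equality and drives the non-pairing mass to zero. Note that with $K'_S(\lambda)$ computed independently, you do \emph{not} need to control the infinite sum over backbone multigraphs with $v_4\geq 1$---so the ``uniform error'' concern you raise for general $G$ can be dropped; only the single asymptotic $I_{G_0}\sim 2\pi\,\lambda^{-(k+1)}$ is required.

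The paper instead takes a soft route: any subsequential limit inherits invariance under permutations of $S_1$ and of $S_2$, so it suffices to show the limiting law is supported on pairings. That reduces to showing the source clusters contain no vertex of degree $\geq 4$ in the limit $g\to 0$, for which the first-moment estimates already established in Section~\ref{section: properties of the current} (specifically \eqref{eq: degree 4} and a companion bound for $\Delta_{\n_1}\geq 2,\Delta_{\n_2}\geq 2$) carry an explicit prefactor $\tilde{g}^4$ that vanishes as $g\to 0$. The paper's argument thus recycles the degree estimates and avoids any asymptotic analysis of the integrals or of $K'_S(\lambda)$; your argument is more computational but makes the emergence of the count $(2k-1)!!$ transparent at the level of backbone matchings rather than via symmetry.
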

\begin{proof}
Fix $S_1, S_2$ even and disjoint subsets of $V_n$ of size $A_x+\Delta_{\n_1}(x)$ and $B_x+\Delta_{\n_2}(x)$, respectively. Let $\mu^{\n_1,\n_2}_{x,A,B}$ be any limiting measure as $g$ tends to $0$. Due to the invariance of $\mathbb{P}^{S_1,S_2}_{n,\lambda}$ with respect to permuting the elements of $S_1$ and the elements of $S_2$,  $\mu^{\n_1,\n_2}_{x,A,B}$ also has this property, and the desired result will follow once we prove that $\mu^{\n_1,\n_2}_{x,A,B}$ is supported on (admissible) pairings. 

To this end, note that if there are no vertices of degree $4$ (or more) under $\mathbb{P}^{S_1,S_2}_{n,\lambda}$, then $\mathfrak{P}(\n)$ consists of pairs, see the discussion above Definition \ref{def: backbone multigraph}. Let us estimate the probability of having a vertex of degree at least $4$. By \eqref{eq: application of the switching to connect the rho measures},  it is sufficient to study this question for the measure $\mathbb P_{n,\lambda}^{S_1\sqcup S_2,\emptyset}$. Let $S:=S_1\sqcup S_2$.

By Lemma~\ref{lem: degree}~$(ii)$, all sources have degree $1$ with high probability, and it remains to consider the degree of the vertices in $V_n\setminus S$. 
Let $i\in V_n\setminus S$. If $\Delta_\n(i)\geq 4$ then one of the following occurs: $(i)$ $\Delta_{\n_1}(i)\geq 4$ or $\Delta_{\n_2}(i)\geq 4$, $(ii)$ $\Delta_{\n_1}(i)\geq 2$ and $\Delta_{\n_2}(i)\geq 2$. The contribution coming from case $(i)$ is handled by \eqref{eq: degree 4}. The contribution coming from $(ii)$ is handled by noticing that for $j,k\notin S$,
\begin{eqnarray*}
\mathbb{P}^{S,\emptyset}_{n,\lambda}\left[\n_1(i,j)\geq 1, \: \n_2(i,k)\geq 1\right]&\leq& 
(1+o(1))d_n(\lambda)^2 (c_n n)^{-4} \frac{\langle \varphi^{|S|+2}\rangle_{0} \langle \varphi^{2}\rangle_{0}}{\langle \varphi^{|S|}\rangle_{0}}
\\&=& (1+o(1))\tilde{g}^4n^{-3}\frac{\langle \varphi^{|S|+2}\rangle_0 \langle \varphi^{2}\rangle_{0}}{\langle \varphi^{|S|}\rangle_0},  
\end{eqnarray*}
which implies that
\begin{equs}\label{eq: degree 2+2}
\mathbb{E}^{S,\emptyset}_{n,\lambda}\left[\sum_{i\notin S}\mathbb 1\left[\Delta_{\n_1}(i)\geq 2, \: \Delta_{\n_2}(i)\geq 2\right]\right] \leq (1+o(1))\tilde{g}^4\frac{\langle \varphi^{|S|+2}\rangle_0 \langle \varphi^{2}\rangle_{0}}{\langle \varphi^{|S|}\rangle_0}.    
\end{equs}
Hence, one can find $C>0$ (which does not depend on $g$) such that, 
\begin{equs}
    \limsup_{n\rightarrow\infty}\mathbb E_{n,\lambda}^{S,\emptyset}\Big[|\lbrace i\in V_n, \: \Delta_\n(i)\geq 4\rbrace|\Big]\leq C\tilde{g}^4\left(\frac{\langle \varphi^{|S|+2}\rangle_{0,g,a} \langle \varphi^{2}\rangle_{0,g,a}}{\langle \varphi^{|S|}\rangle_{0,g,a}}+\frac{\langle \varphi^{|S|+4}\rangle_{0,g,a}}{\langle \varphi^{|S|}\rangle_{0,g,a}}+\langle \varphi^{4}\rangle_{0,g,a}\right).
\end{equs}
If $p\geq 0$, as $g$ tends to $0$, $\langle \varphi^{2p}\rangle_{0,g,a}$ tends to $\langle \varphi^{2p}\rangle_{0,a}$.
As a consequence, using Markov's inequality,
\begin{equs}
\lim_{g\to 0}\limsup_{n\to\infty}\mathbb{P}^{S,\emptyset}_{n,\lambda}\left[\exists \: i\in V_n, \: \Delta_{\n}(i)\geq 4\right]=0.    
\end{equs}
The desired result follows.
\end{proof}
Combining Theorem \ref{thm: switching lemma} with the above result yields a new proof of Wick's law for the GFF.
\begin{cor}[Wick's law] Let $\Lambda=(V,E)$ be a finite graph. Let $a>0$ and $\beta\geq 0$ be such that $a>4d\beta$. Consider the Gaussian free field at parameter $a$ and inverse temperature $\beta$ on $\Lambda$. Then for any $p\geq 1$, for any $(i_k)_{1\leq k \leq 2p}\in V^{2k}$, one has,
\begin{equs}
    \mathbb E^{\mathsf{GFF}}_{\Lambda,a,\beta}\left[\prod_{k=1}^{2p}\varphi_{i_k}\right]=\sum_{\pi}\prod_{k=1}^p\mathbb E_{\Lambda,a,\beta}^{\mathsf{GFF}}\left[\varphi_{i_{\pi(2k-1)}}\varphi_{i_{\pi(2k)}}\right],
\end{equs}
where the sum is over pairings $\pi$ of $\lbrace 1,\ldots,2p\rbrace$.
\end{cor}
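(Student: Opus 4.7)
\emph{Plan.} I would argue by induction on $p$, the base case $p=1$ being trivial. The inductive step consists in establishing the Gaussian integration-by-parts identity
\begin{equation*}
\langle \varphi_{i_1}\cdots\varphi_{i_{2p}}\rangle^{\mathsf{GFF}}_{\Lambda,a,\beta} = \sum_{k=2}^{2p}\langle \varphi_{i_1}\varphi_{i_k}\rangle^{\mathsf{GFF}}_{\Lambda,a,\beta}\, \langle \varphi_{i_2}\cdots\widehat{\varphi_{i_k}}\cdots\varphi_{i_{2p}}\rangle^{\mathsf{GFF}}_{\Lambda,a,\beta},
\end{equation*}
from which Wick's law follows by applying the inductive hypothesis to each factor on the right.

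Assume first that the $i_j$ are pairwise distinct, set $A := \sum_{j=1}^{2p}\delta_{i_j}$ and, for each $k\in\{2,\dots,2p\}$, define $A_k := \delta_{i_1}+\delta_{i_k}$ and $B_k := A - A_k$. Applying the probabilistic switching lemma (Corollary~\ref{thm: probabilistic switching}) with $(A,B)=(A_k,B_k)$ and $F\equiv 1$ gives, for every $g>0$,
\begin{equation*}
\langle\varphi_{i_1}\varphi_{i_k}\rangle_{\Lambda,g,a,\beta}\,\langle\varphi_{B_k}\rangle_{\Lambda,g,a,\beta} = \langle\varphi_A\rangle_{\Lambda,g,a,\beta}\,\mathbf P^{A,\emptyset}_{\Lambda,g,a,\beta}[\cF^{\Lambda}_{B_k}].
\end{equation*}
Summing over $k$, dividing by $\langle\varphi_A\rangle_{\Lambda,g,a,\beta}$, and letting $g\to 0$ identifies the left-hand side (after multiplying by $\langle\varphi_A\rangle^{\mathsf{GFF}}$) with the right-hand side of the integration-by-parts identity; here I use the weak convergence $\rho_{g,a}\to \rho_a$ together with the uniform integrability of polynomial moments of the $\varphi^4$ measure on the finite graph $\Lambda$. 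Thus the integration-by-parts identity reduces to proving
\begin{equation*}
\lim_{g\to 0}\sum_{k=2}^{2p}\mathbf P^{A,\emptyset}_{\Lambda,g,a,\beta}[\cF^{\Lambda}_{B_k}] = 1.
\end{equation*}

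For this I would invoke Proposition~\ref{prop: cv of the tanglings to unif on pairings}: at every vertex $x$, the local tangling $\rho^{\n_1,\n_2}_{x,A,\emptyset,g,a}$ concentrates on pairings as $g\to 0$. Since $\Lambda$ is finite and $\rho^{\n_1,\n_2}_{A,\emptyset}$ is a product over vertices, a union bound yields that the event $\mathcal{P}$ that every local $\ct_x$ is a pairing satisfies $\mathbf P^{A,\emptyset}_{\Lambda,g,a,\beta}[\mathcal{P}]\to 1$. On $\mathcal{P}$, every vertex of the auxiliary graph $\cH(\n,\ct,A_k,B_k)$ has degree exactly $2$, so it decomposes into cycles together with self-avoiding paths pairing the $2p$ source legs; this defines a global pairing $\Pi$ of $\{i_1,\dots,i_{2p}\}$. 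Since $|A_k|=2$, unwinding the definition of $\cF^{\Lambda}_{B_k}$ shows that this event holds on $\mathcal{P}$ if and only if $\Pi$ pairs $i_1$ with $i_k$. As $k$ ranges over $\{2,\dots,2p\}$, the events $(\cF^{\Lambda}_{B_k})$ partition $\mathcal{P}$ according to the partner of $i_1$, yielding the desired normalization.

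The main obstacle is the combinatorial bookkeeping in this last step: unwinding the definitions of $\cH$, the event $\cF_B^{\Lambda}$, and the tangled-current measure precisely enough to verify that every tangled current in $\mathcal{P}$ does induce a well-defined global pairing via the walk construction, and that the events $(\cF^{\Lambda}_{B_k})_{k=2}^{2p}$ do partition $\mathcal{P}$. The case of repeated indices $i_j = i_{j'}$ requires only a cosmetic modification of the moment functions $A_k$ and $B_k$ (allowing entries larger than one), with the same argument going through.
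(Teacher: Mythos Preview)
Your proposal is correct and follows essentially the same route as the paper: reduce Wick's law to the one-step integration-by-parts identity via induction, obtain that identity from the probabilistic switching lemma with $F\equiv 1$, and use Proposition~\ref{prop: cv of the tanglings to unif on pairings} to see that in the $g\to 0$ limit the tangled graph $\cH(\n,\ct,A,\emptyset)$ is a disjoint union of cycles and paths, so the source legs are matched in pairs and the relevant connection events sum to~$1$.

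Two small differences are worth noting. First, the paper applies the switching lemma with the \emph{pair} $C=C_{u,v}$ in the ``$B$''-slot, so the events appearing are $\cF^{\Lambda}_C=\{ua(1)\leftrightarrow va(j)\}$; you instead put the complement $B_k$ in that slot and work with $\cF^{\Lambda}_{B_k}$. On the pairing event $\mathcal P$ these coincide, and the symmetry of the left-hand side of Corollary~\ref{thm: probabilistic switching} shows they have the same probability anyway, so this is only cosmetic. Second, the paper first passes to the limit measure $\mathbf P^{A,\emptyset}_{\Lambda,a,\beta}$, under which the tanglings are almost surely pairings, and argues deterministically there; you keep $g>0$ and take the limit at the end via $\mathbf P[\mathcal P]\to 1$. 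Both are fine. One point where the paper is slightly cleaner is the case of repeated indices: rather than treating it as a ``cosmetic modification'', the paper works from the start with the labelled legs $ua(1),\dots,ua(A_u)$ and sums over the possible partners $va(j)\in\mathcal S_u$ of $ua(1)$, which transparently accounts for multiplicities (if $i_k=i_\ell$ then your events $\cF^{\Lambda}_{B_k}$ and $\cF^{\Lambda}_{B_\ell}$ literally coincide, so ``partition of $\mathcal P$'' must be read at the level of legs, not of vertices).
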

\begin{proof} For $x\in \Lambda$, set $A_x:=|\lbrace k, \: i_k=x\rbrace|$. This defines an $A\in \mathcal{M}(\Lambda)$. Fix $u\in \Lambda$ such that $A_u>0$. If $v\in \Lambda$ satisfies $A_v>0$, define $C=C_{u,v}:= \mathbb{1}_u+\mathbb{1}_v$ and $B=B_{u,v}:=A-C_{u,v}$.

By Corollary \ref{thm: probabilistic switching} applied to $F\equiv 1$: for all $1\leq j\leq A_v$ (except if $u=v$ in which case\footnote{For this case to occur, one must have $A_u\geq 2$.} we require $j\neq 1$),
\begin{equs}\label{eq: wick1}
    \frac{\langle \varphi_B\rangle^{\mathsf{GFF}}_{\Lambda,a,\beta}\langle \varphi_C\rangle^{\mathsf{GFF}}_{\Lambda,a,\beta}}{\langle \varphi_{A}\rangle^{\mathsf{GFF}}_{\Lambda,a,\beta}}=\mathbf{P}^{A,\emptyset}_{\Lambda,a,\beta}[\cF^{\Lambda}_C]=\mathbf{P}^{A,\emptyset}_{\Lambda,a,\beta}[ua(1)\overset{\cH(\n,\ct,A,\emptyset)}{\longleftrightarrow} va(j)],
\end{equs}
where $\mathbf{P}^{A,\emptyset}_{\Lambda,a,\beta}:=\lim_{g\to 0}\mathbf{P}^{A,\emptyset}_{\Lambda,g,a,\beta}$. In particular, by Proposition \ref{prop: cv of the tanglings to unif on pairings} this measure is defined similarly as in \eqref{eq: def P for phi4} replacing the measures $\rho_{x,A,\emptyset,g,a}^{\n_1+\n_2}$ by $U^{\n_1,\n_2}_{x,A,\emptyset}$.
Now, since $U^{\n_1,\n_2}_{x,A,\emptyset}$ is supported on pairings, almost surely under $\cF^{\Lambda}_B$, all vertices in $\cH(\n,\ct, A,\emptyset)$ have degree $2$ except for the sources that have degree $1$. This implies that $ua(k)$ is connected to only one element of $\mathcal S_{u}:=\lbrace va(t), \: 1\leq t \leq A_v, \: v\in \Lambda\setminus \lbrace u\rbrace\rbrace\cup \lbrace ua(t),\: 2\leq t \leq A_u\rbrace$ in $\cH(\n,\ct,A,\emptyset)$. Thus, 
\begin{equs}\label{eq: wick2}
    \sum_{s\in \mathcal{S}_{u}}\mathbf{P}^{A,\emptyset}_{\Lambda,a,\beta}[ua(1) \overset{\cH(\n,\ct,A,\emptyset)}{\longleftrightarrow} s]=1.
\end{equs}
As a result, summing \eqref{eq: wick1} over $va(j)\in \mathcal{S}_u$ and using \eqref{eq: wick2} yields
\begin{equs}
    \langle \varphi_A\rangle^{\mathsf{GFF}}_{\Lambda,a,\beta}=\sum_{v\in \Lambda}\sum_{j=1+\delta_u(v)}^{A_v}\langle \varphi_{B_{u,v}}\rangle^{\mathsf{GFF}}_{\Lambda,a,\beta}\langle \varphi_{C_{u,v}}\rangle^{\mathsf{GFF}}_{\Lambda,a,\beta},
\end{equs}
which implies the result by induction.
\end{proof}
\subsection{Recovering the switching lemma for the Ising model}
Below, we call $\mathsf{ECT}$ (for ``Everybody Connected Together'') the event that $\mathfrak{P}(\n)=\lbrace \sn\rbrace$.
\begin{prop} Let $a=-2g$. Then, as $g$ tends to $\infty$, $\rho_{x,A,B}^{\n_1,\n_2}$ converges in distribution to the Dirac measure $\delta_{\mathsf{ECT}}$.
\end{prop}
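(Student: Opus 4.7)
The proof follows the blueprint of Proposition \textup{\ref{prop: cv of the tanglings to unif on pairings}}, but with the Ising limit $g\to\infty$ (at $a=-2g$) replacing the Gaussian limit. Fix disjoint even subsets $S_1,S_2\subset V_n$ of sizes $A_x+\Delta_{\n_1}(x)$ and $B_x+\Delta_{\n_2}(x)$, and set $S:=S_1\sqcup S_2$, $k:=|S|/2$. Since $\mathbb P^{S_1,S_2}_{n,\lambda}$ is invariant under permutations of $S_1$ and of $S_2$, so is any subsequential limit; the partition $\{S\}$ is the unique element of $\mathcal P_{\mathrm{even}}(S_1,S_2)$ with that symmetry, so it suffices to prove
\begin{equs}
\lim_{g\to\infty,\,a=-2g}\;\lim_{n\to\infty}\tilde\rho^{S_1,S_2}_{n,\lambda}[\{S\}]=1.
\end{equs}

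The first step uses the switching identity \eqref{eq: application of the switching to connect the rho measures} together with Lemma \textup{\ref{lem: GS}}: as $n\to\infty$, the prefactor $\langle\sigma_S\rangle_{n,\lambda}/(\langle\sigma_{S_1}\rangle_{n,\lambda}\langle\sigma_{S_2}\rangle_{n,\lambda})$ converges to $\langle\varphi^{|S|}\rangle_0/(\langle\varphi^{|S_1|}\rangle_0\langle\varphi^{|S_2|}\rangle_0)$, which tends to $1$ as $g\to\infty$ with $a=-2g$ because $\rho_{g,-2g}$ concentrates on $\{\pm 1\}$ and hence $\langle\varphi^{2p}\rangle_{0,g,-2g}\to 1$ for every $p\geq 0$. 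Thus it suffices to show $\tilde\rho^{S,\emptyset}_{n,\lambda}[\{S\}]\to 1$ in the same order of limits.

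For this, I would apply a union bound over pairs:
\begin{equs}
1-\tilde\rho^{S,\emptyset}_{n,\lambda}[\{S\}]\le\sum_{\{i,j\}\subset S}\mathbb P^{S,\emptyset}_{n,\lambda}\bigl[i\not\leftrightarrow j\text{ in }\n_1+\n_2\bigr].
\end{equs}
The switching lemma \eqref{eq: switching 1}, applied with the roles $(S\Delta\{i,j\},\{i,j\})$ to the trivial event and combined with the identification $\mathcal F_{\{i,j\}}=\{i\leftrightarrow j\}$ noted below \eqref{eq: switching 1}, yields the closed-form identity
\begin{equs}
\mathbb P^{S,\emptyset}_{n,\lambda}\bigl[i\leftrightarrow j\text{ in }\n_1+\n_2\bigr]=\frac{\langle\sigma_{S\Delta\{i,j\}}\rangle_{n,\lambda}\,\langle\sigma_i\sigma_j\rangle_{n,\lambda}}{\langle\sigma_S\rangle_{n,\lambda}}.
\end{equs}
The $(c_n n)$-powers from Lemma \textup{\ref{lem: GS}} cancel exactly in this ratio, so its $n\to\infty$ limit equals $\langle\varphi^{2k-2}\rangle_{0,g,a(\lambda)}\langle\varphi^2\rangle_{0,g,a(\lambda)}/\langle\varphi^{2k}\rangle_{0,g,a(\lambda)}$, which tends to $1$ as $g\to\infty$ with $a=-2g$ by the same concentration of $\rho_{g,-2g}$ on $\{\pm 1\}$. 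The pairwise non-connection probability thus vanishes in the double limit, and the union bound closes the argument.

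The only technical subtlety I anticipate is keeping the two limits ($n\to\infty$ first, then $g\to\infty$) cleanly separated throughout, but that order is already built into the definition of $\rho^{\n_1,\n_2}_{x,A,B,g,a}$ and is guaranteed by the fact that Corollary \textup{\ref{cor: cor 2}} upgrades the subsequential $n$-convergence to full convergence for each fixed $g$; no saddle-point analysis of the densities from Proposition \textup{\ref{prop: main prop}} is required, since the Ising structure reduces the geometric connectivity question to a manifestly convergent ratio of $\varphi^4$ correlators.
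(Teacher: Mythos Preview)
Your argument is correct and essentially identical to the paper's: both reduce from $(S_1,S_2)$ to $(S,\emptyset)$ via \eqref{eq: application of the switching to connect the rho measures} with the prefactor tending to $1$, then use a union bound over pairs $\{i,j\}\subset S$ together with the switching identity $\mathbb P^{S,\emptyset}_{n,\lambda}[i\leftrightarrow j]=\langle\sigma_{S\setminus\{i,j\}}\rangle\langle\sigma_i\sigma_j\rangle/\langle\sigma_S\rangle$ and Lemma~\ref{lem: GS}, finishing with $\langle\varphi^{2p}\rangle_{0,g,-2g}\to 1$. One small remark: your opening claim that ``$\{S\}$ is the unique element of $\mathcal P_{\mathrm{even}}(S_1,S_2)$ with that symmetry'' is false in general (e.g.\ $\{\{1,2\},\{3,4\}\}$ is also fixed when $S_1=\{1,2\}$, $S_2=\{3,4\}$) and in any case unnecessary --- the sufficiency of proving $\tilde\rho^{S_1,S_2}_{n,\lambda}[\{S\}]\to 1$ is immediate since the state space is finite.
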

\begin{proof} First, observe that if $p\geq 0$, as $g$ tends to $\infty$, $\langle \varphi^{2p}\rangle_{0,g,-2g}$ tends to $1$. Fix $S_1, S_2$ even and disjoint subsets of $V_n$ of size $A_x+\Delta_{\n_1}(x)$ and $B_x+\Delta_{\n_2}(x)$, respectively. Using \eqref{eq: application of the switching to connect the rho measures} and the observation above, we see that for any $\mathsf{P}\in \mathcal{P}_{\mathrm{even}}(S_1,S_2)$,
\begin{equs}
    \lim_{g\rightarrow \infty}\rho_{x,A,B,g,-2g}^{\n_1,\n_2}[\mathsf{P}]=\lim_{g\rightarrow \infty}\lim_{n\rightarrow\infty}\mathbb P_{n,\lambda(g)}^{S,\emptyset}[\mathfrak{P}(\n_1+\n_2)=\mathsf{P}],
\end{equs}
where $S=S_1\sqcup S_2$ and $\lambda(g)$ is such that $a(\lambda(g))=-2g$. We can always assume that $S=\lbrace 1,\ldots,|S|\rbrace$. Clearly, if $\mathsf{P}\neq \lbrace S\rbrace$, there exists $1\leq i < j \leq |S|$ such that $i$ is not connected to $j$ in $\n_1+\n_2$. As a result, for such a $\mathsf{P}$,
\begin{equs}
    \mathbb P_{n,\lambda(g)}^{S,\emptyset}[\mathfrak{P}(\n_1+\n_2)=\mathsf{P}]\leq \sum_{1\leq i <j \leq |S|}\left(1-\mathbb P_{n,\lambda(g)}^{S,\emptyset}[i \overset{\n_1+\n_2}{\longleftrightarrow} j]\right).
\end{equs}
Using the switching lemma for the Ising model \eqref{eq: switching 1} and Lemma \ref{lem: GS}, we get that 
\begin{equs}
    \mathbb P_{n,\lambda(g)}^{S,\emptyset}[i \overset{\n_1+\n_2}{\longleftrightarrow} j]=(1+o(1))\frac{\langle \varphi^{|S|-2}\rangle_{0,g,-2g}\langle \varphi^{2}\rangle_{0,g,-2g}}{\langle \varphi^{|S|}\rangle_{0,g,-2g}}.
\end{equs}
This readily implies that for $\mathsf{P}\neq \lbrace S\rbrace$,
\begin{equs}
    \lim_{g\rightarrow \infty}\lim_{n\rightarrow\infty}\mathbb P_{n,\lambda(g)}^{S,\emptyset}[\mathfrak{P}(\n_1+\n_2)=\mathsf{P}]=0
\end{equs}
and the result follows.
\end{proof}
The above result immediately implies that one can recover the switching lemma for the Ising model when taking the Ising limit in the identity of Theorem \ref{thm: switching lemma}. Indeed, the measures $\rho_{x,A,B}^{\n_1,\n_2}$ converge to $\delta_{\mathsf{ECT}}$ which has as a consequence that the vertex set of the multigraph $\cH(\n,\ct,A,B)$ coincides with $V$, hence we can identify $\cH(\n,\ct,A,B)$ with the current $\n$.

\bibliographystyle{alpha}
\bibliography{ref}

\appendix
\section{Explicit Griffiths-Simon approximation for $\phi^4$}\label{appendix: proof of GS}
In this appendix, we obtain asymptotic estimates for the partition functions of the Ising and random current models on the complete graph. Along the way, we reprove the main result of \cite{simon1973varphi}.

\begin{prop}\label{prop: gs approx}
Let $\lambda\in \mathbb R$. Let $(\sigma_1,\dots, \sigma_n)$ be distributed as spins of the Ising model on the complete graph $K_n$ with inverse temperature $d_n(\lambda)$. Then, one has 
\[
\frac{\sigma_1+\dots+\sigma_n}{n^{3/4}}\underset{n\rightarrow \infty}{\longrightarrow} \frac{1}{z(\lambda)}\: e^{-\frac{1}{12}s^4-\frac{\lambda}{2}s^2}\: \mathrm{d}s,
\]
where the convergence is in distribution and $z(\lambda):=\int_{\mathbb{R}}  e^{-\frac{1}{12}s^4-\frac{\lambda}{2}s^2}\mathrm{d}s $ is a renormalising factor.
Moreover, for the partition function of the Ising model on $K_n$ we have 
\[
\mathcal{Z}_n(\lambda)= (1+o(1)) \frac{2^n{n^{1/4}}}{\sqrt{2\pi e}} \int_{\mathbb{R}}  e^{-\frac{1}{12}s^4-\frac{\lambda}{2}s^2}\mathrm{d}s.
\]
\end{prop}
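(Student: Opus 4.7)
The plan is to apply a Hubbard--Stratonovich (HS) linearisation of the all-to-all Ising interaction, then carry out a Laplace-type analysis of the resulting one-dimensional integral; both asymptotic statements then follow from essentially the same computation, the distributional one via moment generating functions. Writing $S_n := \sigma_1 + \dots + \sigma_n$, one has $\mathcal{H}_{n,\lambda}(\sigma) = -\frac{d_n(\lambda)}{2}(S_n^2 - n)$. Since $d_n(\lambda) > 0$ for $n$ large, the Gaussian identity $e^{aS^2/2} = (2\pi)^{-1/2}\int_\RR e^{-x^2/2 + x\sqrt{a}\,S}\,dx$ applied with $a = d_n(\lambda)$, together with the factorisation $\sum_{\sigma \in \{\pm 1\}^{V_n}} e^{xS_n\sqrt a} = (2\cosh(x\sqrt a))^n$, yields
\begin{equation*}
\mathcal{Z}_n(\lambda) = \frac{e^{-nd_n(\lambda)/2}}{\sqrt{2\pi}}\int_\RR e^{-x^2/2}\left(2\cosh(x\sqrt{d_n(\lambda)})\right)^n\,dx,
\end{equation*}
and the same manipulation applied to $\sum_\sigma e^{(\theta/n^{3/4})S_n}e^{-\mathcal{H}_{n,\lambda}(\sigma)}$ merely shifts the argument of $\cosh$ by $\theta/n^{3/4}$.

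Next I would substitute $x = n^{1/4}s$ (the correct scale, since $x\sqrt{d_n(\lambda)}\asymp s/n^{1/4}$ falls precisely in the Taylor regime of $\log\cosh$). Using $\log\cosh(y) = y^2/2 - y^4/12 + O(y^6)$ and $d_n(\lambda) = n^{-1}(1-\lambda/\sqrt n)$, a direct expansion gives on compact sets
\begin{equation*}
-\frac{x^2}{2} + n\log\cosh\!\left(x\sqrt{d_n(\lambda)} + \frac{\theta}{n^{3/4}}\right) = -\frac{\lambda s^2}{2} + \theta s - \frac{s^4}{12} + o(1),
\end{equation*}
where the leading $\sqrt n\, s^2/2$ terms cancel exactly. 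Combined with the Jacobian $n^{1/4}$ and the prefactor $e^{-nd_n(\lambda)/2} = (1+o(1))e^{-1/2}$ (from $nd_n(\lambda) = 1-\lambda/\sqrt n$), this yields simultaneously, for the two choices $\theta = 0$ and $\theta\in\RR$ arbitrary,
\begin{equation*}
\mathcal{Z}_n(\lambda) = (1+o(1))\frac{2^n n^{1/4}}{\sqrt{2\pi e}}\,z(\lambda),\qquad
\EE\!\left[e^{\theta S_n/n^{3/4}}\right] \longrightarrow \frac{1}{z(\lambda)}\int_\RR e^{\theta s - \lambda s^2/2 - s^4/12}\,ds.
\end{equation*}
The right-hand side of the second display is the moment generating function of the density $\frac{1}{z(\lambda)}e^{-s^4/12-\lambda s^2/2}\,ds$, which is finite on all of $\RR$ and therefore uniquely identifies the weak limit.

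The main non-routine point is upgrading the pointwise expansion above to a bound amenable to dominated convergence \emph{uniformly} in $s \in \RR$, not merely on compacts. For this I would combine the Taylor bound for $|y|\leq 1$ with the elementary inequality $\log\cosh(y)\leq |y|$ for $|y|\geq 1$ to obtain the two-regime estimate $\log\cosh(y)\leq y^2/2 - \min(y^4/24,\,y^2/4)$. Translated back via $y\asymp s/n^{1/4}$, this produces a quartic envelope of the form $e^{|\lambda|s^2 - s^4/48}$ on $|s|\leq n^{1/4}$ and strong Gaussian decay $e^{-c\sqrt n\, s^2}$ beyond; both are uniformly integrable in $n$, so dominated convergence applies and the whole computation goes through.
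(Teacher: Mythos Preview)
Your proof is correct and follows a genuinely different route from the paper's. The paper works combinatorially: it writes $\mathbf P_{n,\lambda}[\sum\sigma_i=2m]$ explicitly via binomial coefficients, studies the ratio $f_n(m)/f_n(m-1)$ by Taylor expanding $\log\frac{1-2m/n}{1+2m/n}$, telescopes to get $f_n(m)/f_n(0)\sim e^{-\frac{4m^4}{3n^3}-\frac{2\lambda m^2}{n^{3/2}}}$, and then sums. This amounts to a local limit theorem, with the tail controlled by the sign of the higher Taylor coefficients and Stirling providing the constant. By contrast, you linearise via Hubbard--Stratonovich, reduce everything to the Laplace analysis of a one-dimensional integral, and identify the limit through the moment generating function. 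Your approach is cleaner for the partition function asymptotic and extends more readily (e.g.\ to moments or external fields), while the paper's gives a pointwise local limit statement for free and avoids any appeal to moment-problem uniqueness.

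One small caveat: the explicit two-regime inequality you wrote, $\log\cosh(y)\le y^2/2-\min(y^4/24,y^2/4)$, is false as stated (e.g.\ at $y=2$ the left side is $\approx 1.33$ while the right side is $1$). The strategy is fine, but the constants need adjusting; for instance, split at $|s|\lesssim n^{1/4}$ versus $|s|\gtrsim n^{1/4}$ and in the latter regime use $n\log\cosh(y)\le n|y|\lesssim n^{3/4}|s|$, which is beaten by the Gaussian term $-\sqrt n\, s^2/2$ once $|s|$ exceeds a fixed multiple of $n^{1/4}$. This is routine to fix and does not affect the validity of your argument.
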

\begin{proof}
We assume that $n$ is even, the other case is similar. We denote by $\mathbf P_{n,\lambda}$ the measure of the Ising model on $K_n$ with inverse temperature $d_n(\lambda)$. Note that
\[
2\sum_{1\leq i < j \leq n} \sigma_i\sigma_j=\left(\sum_{i=1}^n \sigma_i\right)^2-n,
\] 
so that, for any $m\in [0, n/2]$ we have, 
\[
    \mathbf{P}_{n,\lambda}\left[\sum_{i=1}^n \sigma_i = 2m\right]=\frac{1}{\mathcal{Z}_n(\lambda)}\binom{n}{n/2+m} \exp{\left(d_n(\lambda)\frac{4m^2-n}{2}\right)}=\frac{\exp(-d_n(\lambda) \frac{n}{2})}{\mathcal{Z}_n(\lambda)}f_n(m),
\]
where $f_n(m):=\binom{n}{n/2+m} \exp{\left(2 d_n(\lambda) m^2\right)}$. We have
\begin{equs}
\log{\frac{f_n(m)}{f_n(m-1)}} 
&= \log{\frac{n/2-m+1}{n/2+m}} + (1-\lambda/\sqrt{n}) \frac{4m-2}{n} \\
&= \log{\frac{1-2m/n}{1+2m/n}}+ (1-\lambda/\sqrt{n}) \frac{4m}{n}+O\left(\frac{1}{n}\right).
\end{equs}
Expanding in powers of $m/n$ and using that $\log{(1-2t)}-\log{(1+2t)}=-4t-16t^3/3+O(t^5)$, we observe that the first order term cancels and we get
\[
\log{\frac{f_n(m)}{f_n(m-1)}} = -\frac{16m^3}{3 n^3} - \lambda\frac{4m}{n\sqrt{n}} +O\left(\frac{m^5}{n^5}+\frac{1}{n}\right).
\]
By telescoping, we obtain that
\begin{equs}
\log\frac{f_n(m)}{f_n(0)}=-\frac{4m^4}{3 n^3} - \lambda\frac{2m^2}{n\sqrt{n}} +O\left(\frac{m^6}{n^5}+\frac{m}{n}\right).
\end{equs}
By symmetry, the above holds also for negative $m$ up to an error term $O\left(\frac{m^6}{n^5}+\frac{|m|}{n}\right)$.
Letting $g(x)=\exp\left(-\frac{4}{3}x^4-2\lambda x^2\right)$, for any $a,b\in \mathbb{R}$ with $a<b$ we have
\begin{equs}
    \mathbf{P}_{n,\lambda}\left[an^{3/4}\leq \sum_{i=1}^n \sigma_i \leq bn^{3/4}\right]&=(1+o(1))\frac{\exp(-d_n(\lambda) \frac{n}{2}) f_n(0)}{\mathcal{Z}_n(\lambda)}\sum_{\frac{a}{2}n^{3/4}\leq m \leq \frac{b} {2}n^{3/4}} g\left(\frac{m}{n^{3/4}}\right)
    \\
    &=(1+o(1))\frac{e^{-1/2} f_n(0)}{\mathcal{Z}_n(\lambda)} n^{3/4}\int_{\frac{a}{2}}^{\frac{b}{2}} g\left(\lambda\right)\mathrm{d}\lambda 
    \\
    &=(1+o(1))\:\frac{e^{-1/2} f_n(0)}{2 \mathcal{Z}_n(\lambda)}\: n^{3/4}\int_a^b  e^{-\frac{1}{12}s^4-\frac{\lambda}{2}s^2} \mathrm{d}s.
\end{equs}

It remains to estimate $\mathcal{Z}_n(\lambda)$, and for that we need to handle the case where $m$ is much larger than $n^{3/4}$. Since all terms in the Taylor expansion of $\log{(1-2t)}-\log{(1+2t)}$ are non-positive it follows that
\[
\log{\frac{f_n(m)}{f_n(m-1)}} \leq -\frac{16m^3}{3n^3} -\lambda\frac{4m}{n^{3/2}} + O\left(\frac{1}{n}\right).
\]
Thus, there exists $C>0$ such that $f_n(m)\leq Cf_n(0) g\left(\frac{m}{n^{3/4}}\right)$,
from which we can conclude by arguing as above that
\begin{equs}
\mathcal{Z}_n(\lambda)= (1+o(1))\: \frac{e^{-1/2} f_n(0)}{2}\: n^{3/4}\int_{\mathbb{R}}  e^{-\frac{1}{12}s^4-\frac{\lambda}{2}s^2}\: \mathrm{d}s. 
\end{equs}
Sending $n$ to infinity we get the desired convergence in distribution.

Finally, we have $f_n(0) = (1+o(1))\: 2^n/\sqrt{\pi n/2}$ by Stirling's approximation, hence
\[
\mathcal{Z}_n(\lambda)=(1+o(1)) \: \frac{2^n n^{1/4}}{\sqrt{2\pi e}} \int_{\mathbb{R}}  e^{-\frac{1}{12}s^4-\frac{\lambda}{2}s^2}\mathrm{d}s,
\]
as desired.
\end{proof}
Recall that for $g>0$ and $\lambda\in \mathbb R$, the measure $\langle \cdot\rangle_0$ is the average with respect to $\rho_{g,a(\lambda)}$ with $a(\lambda)=\frac{\lambda\tilde{g}^{2}}{2}$.
\begin{prop}[Estimate of the partition function of the current]\label{prop: estimating the partition function}
Let $\lambda\in \mathbb R$, $g>0$, and $k\geq 1$. Set $S=\lbrace 1,\ldots, 2k\rbrace$. Then, 
\[
Z^S_n(\lambda)=(1+o(1))\tilde{g}^{2k}\frac{\left\langle\varphi^{2k}\right\rangle_0}{\sqrt{2\pi e}} \: n^{1/4-k/2} \int_{\mathbb{R}}  e^{-\frac{1}{12}s^4-\frac{\lambda}{2}s^2}\: \mathrm{d}s.
\]
Moreover, for every $n\geq 1$, $Z^S_n(\lambda)$ is decreasing as a function of $\lambda\in (-\infty,\sqrt{n}]$.
\end{prop}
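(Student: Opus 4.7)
The plan is to deduce the asymptotic formula directly by combining the two preceding results of this appendix. The identity $\mathcal{Z}_n(\lambda)=2^n Z_n^\emptyset(\lambda)$ from \eqref{eq: link between partition functions} together with the random-current representation $\langle \sigma_S\rangle_{n,\lambda}=Z_n^S(\lambda)/Z_n^\emptyset(\lambda)$ gives
$$
Z_n^S(\lambda)=\langle \sigma_S\rangle_{n,\lambda}\,\frac{\mathcal{Z}_n(\lambda)}{2^n}.
$$
Applying Lemma~\ref{lem: GS} with $p=2k$ yields $\langle \sigma_S\rangle_{n,\lambda}=(1+o(1))(c_n n)^{-2k}\langle\varphi^{2k}\rangle_0$. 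Since $c_n n = \tilde g^{-1}n^{1/4}$, this equals $(1+o(1))\tilde g^{2k}n^{-k/2}\langle\varphi^{2k}\rangle_0$. Multiplying this by the asymptotic expression for $\mathcal{Z}_n(\lambda)/2^n$ provided by Proposition~\ref{prop: gs approx} produces exactly the claimed formula (the integral $\int_{\mathbb{R}} e^{-\frac{1}{12}s^4-\frac{\lambda}{2}s^2}\mathrm{d}s$ appears identically on both sides, with no further manipulation required).

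For the monotonicity, the plan is to view the defining expansion
$$
Z_n^S(\lambda)=\sum_{\sn=S}\prod_{\{i,j\}\in\mathcal{P}_2(V_n)}\frac{d_n(\lambda)^{\n_{i,j}}}{\n_{i,j}!}
$$
as an absolutely convergent power series in the single variable $x=d_n(\lambda)$ with non-negative coefficients (each term corresponds to a monomial $x^{\sum_{\{i,j\}}\n_{i,j}}$ with a positive combinatorial weight $\prod_{\{i,j\}}1/\n_{i,j}!$). Any such power series is non-decreasing in $x$ on $[0,\infty)$. Since the map $\lambda\mapsto d_n(\lambda)=\tfrac{1}{n}(1-\lambda/\sqrt n)$ is strictly decreasing in $\lambda$ and takes non-negative values precisely on $(-\infty,\sqrt n]$, composing gives that $\lambda\mapsto Z_n^S(\lambda)$ is decreasing on that interval.

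I do not anticipate any real obstacle: the first part is essentially a direct repackaging of Lemma~\ref{lem: GS} and Proposition~\ref{prop: gs approx}, while the second part follows from elementary monotonicity considerations applied to the explicit weight of a current. The only minor point to be careful about is that the $o(1)$ terms coming from Lemma~\ref{lem: GS} and Proposition~\ref{prop: gs approx} are uniform enough to be multiplied together, which is the case since both are taken with $\lambda$ fixed as $n\to\infty$.
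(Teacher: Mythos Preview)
Your proof is correct and follows essentially the same approach as the paper: combine Lemma~\ref{lem: GS} with Proposition~\ref{prop: gs approx} via $Z_n^S(\lambda)=\langle\sigma_S\rangle_{n,\lambda}\,2^{-n}\mathcal{Z}_n(\lambda)$ for the asymptotic, and deduce monotonicity from the fact that $Z_n^S$ is a power series with non-negative coefficients in $d_n(\lambda)$, which is decreasing in $\lambda$. The paper's own proof is terser on the monotonicity (it merely invokes the monotonicity of $d_n(\lambda)$), but the underlying reason is exactly the one you spell out.
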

\begin{proof}
Recall that by Lemma \ref{lem: GS}
\begin{equs}
\frac{Z^S_n(\lambda)}{Z_n(\lambda)}=\langle \sigma_S\rangle_{n,\lambda}=(1+o(1))\:(c_n n)^{-2k} \left\langle\varphi^{2k}\right\rangle_0,
\end{equs}
and as mentioned in \eqref{eq: link between partition functions},
$Z_n(\lambda)=2^{-n}\mathcal{Z}_n(\lambda)$.
Thus, by Proposition \ref{prop: gs approx}, and since $(c_nn)^2=\tilde{g}^{-2}\sqrt{n}$,
\begin{equs}
Z^S_n(\lambda)&=(1+o(1))\: (c_n n)^{-2k} 
\frac{\left\langle\varphi^{2k}\right\rangle_0}{\sqrt{2\pi e}} \: n^{1/4} \int_{\mathbb{R}}  e^{-\frac{1}{12}s^4-\frac{\lambda}{2}s^2}\: \mathrm{d}s \\
&=(1+o(1))
\tilde{g}^{2k}\frac{\left\langle\varphi^{2k}\right\rangle_0}{\sqrt{2\pi e}} \: n^{1/4-k/2} \int_{\mathbb{R}}  e^{-\frac{1}{12}s^4-\frac{\lambda}{2}s^2}\: \mathrm{d}s.
\end{equs}
The second part of the statement follows from the monotonicity of $d_n(\lambda)$ as a function of $\lambda$.
\end{proof}

\end{document}